\newcommand{\Galg}{\mathbf{G}}
\newcommand{\Talg}{\mathbf{T}}
\newcommand{\Salg}{\mathbf{S}}
\newcommand{\Balg}{\mathbf{B}}
\newcommand{\Nalg}{\mathbf{N}}
\newcommand{\Spalg}{\mathbf{Sp}}
\newcommand{\Hom}{\mathrm{Hom}}
\newcommand{\Q}{\mathbb{Q}}
\newcommand{\A}{\mathbb{A}}
\providecommand{\C}{\mathbb{C}}
\renewcommand{\C}{\mathbb{C}}    
\newcommand{\N}{\mathbb{N}}
\newcommand{\Qp}{\mathbb{Q}_p}
\newcommand{\Qpbar}{\overline{\mathbb{Q}}_p}
\newcommand{\Qbar}{\overline{\mathbb{Q}}}
\newcommand{\F}{\mathbb{F}}
\newcommand{\Zp}{\mathbb{Z}_p}
\newcommand{\Z}{\mathbb{Z}}
\newcommand{\Ocal}{\mathcal{O}}
\newcommand{\Ccal}{\mathcal{C}}
\newcommand{\Xscr}{\mathscr{X}}
\newcommand{\Hcal}{\mathcal{H}}
\newcommand{\Wscr}{\mathscr{W}}
\newcommand{\R}{\mathbb{R}}
\newcommand{\St}{\mathrm{St}}
\newcommand{\Ind}{\mathrm{Ind}}
\newcommand{\Res}{\mathrm{Res}}
\newcommand{\Tr}{\mathrm{Tr}}
\newcommand{\BC}{\mathrm{BC}}
\newcommand{\GL}{\mathrm{GL}}
\newcommand{\SL}{\mathrm{SL}}
\newcommand{\SO}{\mathrm{SO}}
\newcommand{\Sp}{\mathrm{Sp}}
\newcommand{\Fil}{\mathrm{Fil}}
\newcommand{\ilim}{\mathop{\varinjlim}\limits}
\newcommand{\Bcris}{B_{\mathrm{cris}}}
\newcommand{\Dcris}{D_{\mathrm{cris}}}
\newcommand{\Ddr}{D_{\mathrm{dR}}}
\newtheorem{theo}{Theorem}[subsection]
\newtheorem*{theononumb}{Theorem}
\newtheorem{lemm}[theo]{Lemma}
\newtheorem*{conj}{Conjecture}
\newtheorem{assu}[theo]{Assumption}
\newtheorem{coro}[theo]{Corollary}
\newtheorem{defi}[theo]{Definition}
\newtheorem{prop}[theo]{Proposition}
\newtheorem{rema}[theo]{Remark}
\newtheorem{theointro}{Theorem}
\numberwithin{equation}{subsection}
\begin{document}

\baselineskip=16pt

\author{Olivier Taïbi}
\title{Eigenvarieties for classical groups and complex conjugations in Galois representations}
\date{}

\maketitle

\begin{abstract}
The goal of this paper is to remove the irreducibility hypothesis in a theorem of Richard Taylor describing the image of complex conjugations by $p$-adic Galois representations associated with regular, algebraic, essentially self-dual, cuspidal automorphic representations of $\GL_{2n+1}$ over a totally real number field $F$.
We also extend it to the case of representations of $\GL_{2n}/F$ whose multiplicative character is ``odd''.
We use a $p$-adic deformation argument, more precisely we prove that on the eigenvarieties for symplectic and even orthogonal groups, there are ``many'' points corresponding to (quasi-)irreducible Galois representations.
The recent work of James Arthur describing the automorphic spectrum for these groups is used to define these Galois representations, and also to transfer self-dual automorphic representations of the general linear group to these classical groups.
\end{abstract}

\tableofcontents
\newpage

\section{Introduction}

Let $p$ be a prime.
Let us choose once and for all algebraic closures $\overline{\Q},\Qpbar,\C$ and embeddings $\iota_p : \overline{\Q} \hookrightarrow \Qpbar$, $\iota_{\infty} : \overline{\Q} \hookrightarrow \C$.
Let $F$ be a totally real number field.
A regular, L-algebraic, essentially self-dual, cuspidal (RLAESDC) representation of $\GL_n(\A_F)$ is a cuspidal automorphic representation $\pi$ together with an algebraic character $\eta |\cdot|^q$ of $\A_F^{\times}/F^{\times}$ ($\eta$ being an Artin character, and $q$ an integer) such that
\begin{itemize}
\item $\pi^{\vee} \simeq \eta |\det|^q \otimes \pi$,
\item $\eta_v(-1)$ does not depend on the real place $v$ of $F$ (the common value will be denoted $\eta_{\infty}(-1)$),
\item For any real place $v$ of $F$, $\mathcal{LL}(\pi_v)|_{\mathrm{W}_{\C}} \simeq \oplus_i \left( z \mapsto z^{a_{v,i}} \bar{z}^{b_{v,i}} \right)$ where $\mathcal{LL}$ is the local Langlands correspondence, $\mathrm{W}_{\C} \simeq \C^{\times}$ is the Weil group of $\C$, and $a_{v,i}$, $b_{v,i}$ are integers and $a_{v,i} \neq a_{v,j}$ if $i \neq j$.
\end{itemize}
By definition, $\pi$ is regular, L-algebraic, essentially self-dual, cuspidal (RLAESDC) if and only if $\pi \otimes |\det|^{(n-1)/2}$ is regular, \emph{algebraic} (in the sense of Clozel), essentially self-dual, cuspidal (RAESDC).
The latter is the notion of ``algebraic'' usually found in the literature, and is called ``C-algebraic'' in \cite{BuzGee}.
Given a RLAESDC representation $\pi$ of $\GL_n(\A_F)$, there is (Theorem \ref{theo:galrepgl}) a unique continuous, semisimple Galois representation $\rho_{\iota_p,\iota_{\infty}}(\pi) : G_F \rightarrow \GL_n(\Qpbar)$ such that $\rho_{\iota_p,\iota_{\infty}}(\pi)$ is unramified at any finite place $v$ of $F$ not lying above $p$ for which $\pi_v$ is unramified, and $\iota_{\infty} \iota_p^{-1} \Tr\left(\rho_{\iota_p,\iota_{\infty}}(\pi)(\mathrm{Frob}_v)\right)$ is equal to the trace of the Satake parameter of $\pi_v$ (contained in this assertion is the fact that this trace is algebraic over $\Q$). 
It is conjectured that for any real place $v$ of $F$, if $c_v \in G_F$ is the conjugacy class of complex conjugations associated with $v$, the conjugacy class of $\rho_{\iota_p,\iota_{\infty}}(\pi)(c_v)$ is determined by $\mathcal{LL}(\pi_v)$ (see \cite{BuzGee}[Lemma 2.3.2] for the case of an arbitrary reductive group).
In the present case, by Clozel's purity lemma and by regularity, $\mathcal{LL}(\pi_v)$ is completely determined by its restriction to $\mathrm{W}_{\C}$, and since $\det\left( \rho_{\iota_p,\iota_{\infty}}(\pi)\right)$ is known, the determination of $\rho_{\iota_p,\iota_{\infty}}(\pi)(c_v)$ amounts to the following
\begin{conj}
Under the above hypotheses, $\left| \Tr\left(\rho_{\iota_p,\iota_{\infty}}(\pi)(c_v) \right) \right| \leq 1$.
\end{conj}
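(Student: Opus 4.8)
The plan is to prove the inequality by a $p$-adic deformation argument on an eigenvariety for a classical group, in the cases where the method applies: $n$ odd, and $n$ even with $\eta|\cdot|^q$ odd. \emph{Transfer and eigenvariety.} Using Arthur's classification of the discrete automorphic spectrum, I would first transfer $\pi$ to a classical group $\Galg/F$ whose dual group sits inside $\GL_n(\C)$ via an orthogonal standard representation: $\Galg=\Spalg_{n-1}$ when $n$ is odd (the self-dual twist of $\pi$ is then automatically of orthogonal type), and $\Galg$ a quasi-split even special orthogonal group $\SO_n$ when $n$ is even (the oddness of the multiplicative character forcing the orthogonal case). Since $\pi$ is regular and C-algebraic, the archimedean Arthur packets of the transfer are Adams--Johnson packets and contain a cohomological member; choosing such a member $\sigma$ in the discrete spectrum (legitimate by Arthur's multiplicity formula) together with a finite-slope $p$-stabilisation, $\sigma$ defines a classical point $x_\sigma$ on the eigenvariety $\Ecal$ of $\Galg$ constructed from overconvergent cohomology. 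This $\Ecal$ is equidimensional with Zariski-dense classical points, and --- combining the construction of Galois representations for $\Galg$ at classical points (through Arthur) with a pseudo-character interpolation argument over the reduced eigenvariety --- it carries a continuous $n$-dimensional pseudo-character $T:G_F\to\mathcal{O}(\Ecal)$ specialising at $x_\sigma$ to $\Tr\rho_{\iota_p,\iota_\infty}(\pi)$, and at any classical point $x$ to the trace of the Galois representation of the $\GL_n$-isobaric transfer of the automorphic representation parametrised by $x$ (consistently with Theorem \ref{theo:galrepgl}).

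\emph{The trace of a complex conjugation is locally constant.} Set $t:=T(c_v)\in\mathcal{O}(\Ecal)$. At any point $x$ of $\Ecal$ the specialisation $T_x$ is, by Taylor's theorem on pseudo-characters, the trace of a genuine semisimple $\rho_x:G_F\to\GL_n(\Qpbar)$; since $c_v^2=1$, the element $\rho_x(c_v)$ is an involution, so $t(x)=\Tr\rho_x(c_v)$ is an integer with $|t(x)|\le n$. Thus $t$ takes only finitely many integer values, satisfies $\prod_{|k|\le n}(t-k)=0$ on the reduced space $\Ecal$, and is therefore locally constant, hence constant on connected components. Consequently the locus $\{\,|t|\le 1\,\}$ is a (Zariski-)clopen subset of $\Ecal$; to conclude $|t(x_\sigma)|\le 1$ it then suffices to produce in the connected component of $x_\sigma$ a single classical point where the bound is already known --- equivalently, and more robustly, to show that the classical points where the bound holds are Zariski-dense in $\Ecal$.

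\emph{Producing quasi-irreducible points (the crux).} The core of the proof is to show that the classical points $x$ of $\Ecal$ whose $\GL_n$-transfer $\Pi_x$ is cuspidal with \emph{(quasi-)irreducible} Galois representation $\rho_x$ are Zariski-dense, and in particular meet the component of $x_\sigma$. At such a point $\Pi_x$ is a cuspidal RLAESDC representation of $\GL_n$ with $\rho_x$ irreducible, so when $n$ is odd Taylor's theorem gives $|t(x)|=|\Tr\rho_x(c_v)|\le 1$; when $n$ is even one needs the companion statement for odd-character RLAESDC representations of $\GL_n$, and the two are proved together by induction on $n$, the smaller constituents occurring in a reducible $\rho_x$ (including trivial summands, which is why ``irreducible'' must be weakened to ``quasi-irreducible'') being handled by the inductive hypothesis. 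For the density itself I would use: (i) a Coleman-type classicality criterion, so that classical points are dense in every Zariski-open subset of the equidimensional $\Ecal$; (ii) Arthur's multiplicity formula, which identifies the non-cuspidal-transfer classical points as those with a non-trivially composite Arthur parameter --- a condition cutting out, once the tame level and nebentypus are also allowed to vary, a proper analytic subset; and (iii) the fact that cuspidal self-dual $\Pi_x$ have quasi-irreducible Galois representation away from a further proper closed locus. Combining (i)--(iii) with the previous step yields $|t|\le 1$ on all of $\Ecal$, in particular $|\Tr\rho_{\iota_p,\iota_\infty}(\pi)(c_v)|=|t(x_\sigma)|\le 1$.

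The principal obstacle is the density statement of the last step: one must control simultaneously the classicality of points of $\Ecal$, the variation of the shape of Arthur parameters along $\Ecal$ (so as to move $x_\sigma$ off the endoscopic locus without leaving its connected component), and the reducibility behaviour of the resulting $\GL_n$-Galois representations --- this last point being exactly what forces one to work with ``quasi-irreducible'' representations and to bootstrap the even and odd cases together. A secondary difficulty, internal to the transfer step, is to guarantee a cohomological member of the archimedean Arthur packet admitting a finite-slope $p$-stabilisation, and to match the eigenvariety's family of Galois representations with $\rho_{\iota_p,\iota_\infty}(\pi)$ of Theorem \ref{theo:galrepgl}.
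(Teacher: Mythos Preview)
First, note that the statement is labelled a \emph{Conjecture}: the paper does not prove it in full, only the cases of Theorem~\ref{theo:lastthm}, and explicitly leaves open the case $n$ even, $q$ odd, $\eta_\infty(-1)=-1$. Your proposal should be read as an outline for those accessible cases.

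Your high-level architecture (transfer to a classical group, pseudocharacter on an eigenvariety, local constancy of $T(c_v)$, reduce to points where Taylor's theorem applies) matches the paper. But two structural choices differ from the paper and matter. The paper works with \emph{inner forms compact at the real places} (definite symplectic or orthogonal groups over a suitable totally real $F$), not with the quasi-split $\Galg$ you name; this is what makes the eigenvariety construction and the classicality criterion clean. And the paper does \emph{not} transfer $\pi$ alone: it transfers $\pi\boxplus\pi_0$ for an auxiliary self-dual cuspidal $\pi_0$ of $\GL_3$ (or $\GL_1$), chosen so that the combined parameter lands in the right dual group and so that the zero Hodge--Tate weight is carried by $\pi_0$. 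This auxiliary summand is not cosmetic; it is what allows the conclusion once one has only ``almost'' irreducibility.

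The genuine gap is your step~(ii)--(iii). You assert that the endoscopic (non-cuspidal-transfer) locus and the reducible-Galois locus are each proper analytic subsets of the eigenvariety; neither claim is justified, and the paper's introduction says explicitly that it does \emph{not} appear possible to reach a point with irreducible Galois representation by local arguments on the eigenvariety. What the paper actually proves (Theorems~\ref{theo:mainressympl} and~\ref{theo:mainresorth}) is weaker and harder: by exploiting \emph{completely refinable} unramified points, acting by specific Weyl group elements to pass between refinements of the same automorphic representation, and comparing valuations of crystalline Frobenius eigenvalues against Hodge--Tate weights (the lemma preceding Theorem~\ref{theo:mainressympl}), one can reach a nearby classical $\Pi'$ whose Galois representation restricted to each $G_{F_v}$, $v\mid p$, is irreducible \emph{or} (in the symplectic case) an Artin character plus an irreducible $2n$-dimensional piece. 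The residual Artin character is unavoidable by this method, because the zero weight in the standard representation of $\SO_{2n+1}(\C)$ forms its own Weyl orbit. The $\pi_0$-trick then finishes: since $\rho(\pi)\oplus\rho(\pi_0)$ contains no Artin character, for $N$ large the $\Pi'$ produced must have irreducible Galois representation, hence comes from a single cuspidal on $\GL$, and Taylor's theorem (for $n$ odd) or the just-established even case applies. Your inductive bootstrap between even and odd is in the right spirit, but it cannot get started without this local $p$-adic Hodge-theoretic argument replacing your unproven density claims.
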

There are several cases for which this is known.
When $\eta_{\infty}(-1) (-1)^q = -1$ (this happens only if $n$ is even, and by \cite{BC} this means that $\rho_{\iota_p,\iota_{\infty}}(\pi)$ together with the character $\rho_{\iota_p,\iota_{\infty}}(\eta |\cdot|^q)=(\eta \circ \mathrm{rec}) \mathrm{cyclo}^q$, is ``symplectic''), $\rho_{\iota_p,\iota_{\infty}}(\pi)(c_v)$ is conjugate to $-\rho_{\iota_p,\iota_{\infty}}(\pi)(c_v)$, so the trace is obviously zero.

In \cite{Tay}, Richard Taylor proves the following
\begin{theononumb}[Taylor]
Let $F$ be a totally real number field, $n \geq 1$ an integer.
Let $\pi$ be a regular, L-algebraic, essentially self-dual, cuspidal automorphic representation of $\GL_{2n+1}/F$.
Assume that the attached Galois representation $\rho_{\iota_p,\iota_{\infty}}(\pi) : G_F \rightarrow \GL_{2n+1}(\Qpbar)$ is irreducible.
Then for any real place $v$ of $F$,
\[ \Tr \left( r_{\iota_p,\iota_{\infty}}(\pi)(c_v) \right) = \pm 1 . \]
\end{theononumb}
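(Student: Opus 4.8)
The plan is to realise $\rho := \rho_{\iota_p,\iota_\infty}(\pi)$ as a point of a $p$-adic eigenvariety attached to a classical group, to observe that the trace of a fixed complex conjugation is a locally constant $\Z$-valued function on that eigenvariety, and to compute it at a reducible point lying in the same connected component as $\rho$.

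First I would descend $\pi$ to a classical group. After twisting $\pi$ by a suitable Hecke character and by the quadratic character attached to $\det\rho$ --- operations which change $\Tr(\rho(c_v))$ at most by a sign, and which one may carry out at the cost of replacing $\mathrm{SO}_{2n+1}$ by $\mathrm{GSpin}_{2n+1}$ in what follows --- one reduces to the case where $\pi$ is self-dual with trivial central character. As $2n+1$ is odd, such a $\pi$ is orthogonal, so its Langlands parameter has image in $\mathrm{SO}_{2n+1}(\C)=\widehat{\Sp_{2n}}$; by Arthur's endoscopic classification (extended to inner forms), together with the transfer between $\Sp_{2n}$ and an inner form $G/F$ which is anisotropic --- hence compact --- at every real place of $F$ (such a $G$ exists, possibly after enlarging the ramification at one auxiliary finite place), the representation $\pi$ is the functorial lift of a cuspidal automorphic representation $\sigma$ of $G(\A_F)$. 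Regularity of $\pi$ at the real places forces $\sigma_\infty$ to be an irreducible algebraic representation of the compact group $G(F\otimes_\Q\R)$, so $\sigma$ occurs in the space of algebraic modular forms on $G$.

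The space of $p$-adic algebraic modular forms on $G$ carries a reduced, equidimensional eigenvariety $\Ecal$ whose classical points are dense and accumulate at one another; by Arthur's classification every classical point carries a semisimple Galois representation with image in $\mathrm{SO}_{2n+1}(\Qpbar)$, and (following Chenevier) these interpolate into a continuous $(2n+1)$-dimensional pseudocharacter $T\colon G_F\to\Ocal(\Ecal)$ of trivial determinant. Let $x_\pi\in\Ecal$ be the point attached to $\sigma$, so that $T_{x_\pi}$ is the pseudocharacter of $\rho$. For every $x\in\Ecal$ the pseudocharacter $T_x$, being $(2n+1)$-dimensional over a field of characteristic zero, is the character of a genuine semisimple representation $r_x$; since $c_v^2=1$, the element $r_x(c_v)$ is conjugate to $\mathrm{diag}(I_{p_x},-I_{q_x})$ with $p_x+q_x=2n+1$, and, $\det r_x$ being trivial, $q_x$ is even. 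Hence $x\mapsto T_x(c_v)=p_x-q_x$ takes values in $\Z$; but $T(c_v)$ is one fixed rigid-analytic function on $\Ecal$, so this function is continuous, hence locally constant. It is therefore constant on the connected component of $x_\pi$, and it remains to show that this constant is $\pm1$ --- parity of the determinant in fact forcing it to equal $(-1)^n$.

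To pin down this constant I would use density and accumulation of classical points to produce, in the connected component of $x_\pi$, a classical point $y$ whose global Arthur parameter is a nontrivial isobaric sum in which every cuspidal constituent is an algebraic Hecke character, the Galois representation of a Hilbert modular eigenform, or an adjoint (symmetric square) lift of such --- all of which carry, by Theorem~\ref{theo:galrepgl}, Galois representations whose complex conjugation image is known, either classically or by the easy symplectic case discussed above. Then $r_y$ is an explicit direct sum of cyclotomic twists of these pieces, and $\Tr r_y(c_v)$ can be computed; combined with the global constraints $\det r_y\equiv 1$ and self-duality this gives $\Tr r_y(c_v)=\pm 1$ whenever $y$ is chosen so that only one constituent is not of symplectic type (the symplectic ones contributing $0$), the model case being the parameter $\triv\boxtimes\nu_{2n+1}$ of the trivial representation, for which $\Tr r_y(c_v)=(-1)^n$. (Consistently, the case $2n+1\le 3$ is classical: a $3$-dimensional orthogonal representation of trivial determinant attached to $\Sp_2=\SL_2$ is the adjoint of a two-dimensional representation which is odd at $v$.) The main obstacle is precisely producing such a point $y$ in the \emph{prescribed} connected component of $\Ecal$ --- equivalently, showing that the ``manifestly odd'' endoscopic and CAP locus meets every connected component; controlling which Arthur parameters occur in a given component requires a careful analysis of the finite-slope and non-criticality conditions at $p$ together with Arthur's multiplicity formula, after which local constancy of $\Tr T(c_v)$ concludes the argument.
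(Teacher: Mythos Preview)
The paper does not prove this statement: it is quoted, without proof, as Taylor's theorem from \cite{Tay}. Taylor's argument is of a completely different nature from your proposal --- it is geometric, realising the complex conjugation on Shimura varieties for unitary groups, and the irreducibility hypothesis is used precisely to make that geometric construction go through. So there is no ``paper's proof'' to compare with; the theorem is an \emph{input} to the paper, not an output.

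Your eigenvariety approach is in fact close in spirit to what the paper does for its own results (Theorems \ref{theo:thma} and \ref{theo:thmb}), but with the direction of deformation reversed. The paper starts from an arbitrary $\Pi$, deforms on the eigenvariety to a point whose Galois representation is (almost) irreducible, and then \emph{applies} Taylor's theorem at that point. You are proposing to start from the irreducible $\pi$ and deform to a reducible/endoscopic point where $\Tr r_y(c_v)$ is computable by hand. The obstacle you yourself flag --- producing such a $y$ in the \emph{given} connected component of $\Ecal$ --- is a genuine gap, not a technicality: nothing in the eigenvariety machinery guarantees that any endoscopic or CAP point (let alone one built from characters, Hilbert modular forms, and their adjoints) lies in a prescribed component. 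The paper's introduction explicitly notes that controlling which automorphic representations one can reach by local arguments on the eigenvariety is the crux of the difficulty, and its own deformation result (Theorem \ref{theo:thmb}) goes the other way precisely because reaching irreducible points is what one \emph{can} do. Absent a mechanism to force endoscopic points into every component (which would likely require global input at least as deep as Taylor's theorem itself), the argument is incomplete.
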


Although one expects $\rho_{\iota_p,\iota_{\infty}}(\pi)$ to be always irreducible, this is not known in general.
However it is known when $n \leq 5$ by \cite{CalGee}, and for a set of primes $p$ of density one when $n$ is arbitrary but the weights are ``extremely regular'' by \cite[Theorem D]{BLGGT}.

In this paper, the following cases are proved:

\begin{theointro}[Theorem \ref{theo:lastthm}]
\label{theo:thma}
Let $n \geq 2$, $F$ a totally real number field, $\pi$ a regular, L-algebraic, essentially self-dual, cuspidal representation of $\GL_n(\A_F)$, such that $\pi^{\vee} \simeq \left((\eta |\cdot|^q) \circ \det\right) \otimes \pi$, where $\eta$ is an Artin character and $q$ an integer.
Suppose that one of the following conditions holds
\begin{enumerate}
\item $n$ is odd.
\item $n$ is even, $q$ is even, and $\eta_{\infty}(-1)=1$.
\end{enumerate}
Then for any complex conjugation $c \in G_F$, $|\Tr(\rho_{\iota_p,\iota_{\infty}}(\pi)(c))| \leq 1$.
\end{theointro}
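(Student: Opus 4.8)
The plan is to deduce the inequality by a $p$-adic deformation argument on the eigenvarieties of the symplectic and even orthogonal groups, reducing everything to the case of an \emph{irreducible} Galois representation, where it is supplied by Taylor's theorem.

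\textit{Set-up and reductions.} If $n$ is even and $\eta_\infty(-1)(-1)^q=-1$, then $\rho_{\iota_p,\iota_\infty}(\pi)$ together with its multiplicative character is symplectic, so $\rho_{\iota_p,\iota_\infty}(\pi)(c)$ is conjugate to its opposite and the trace vanishes; we may thus assume hypothesis (1) or (2), in each of which the relevant parameter is \emph{orthogonal}. If $n$ is odd, then $\rho_{\iota_p,\iota_\infty}(\pi)$ is $n$-dimensional with $n$ odd, its similitude character is a square, and after an algebraic twist (which only multiplies the trace of $\rho_{\iota_p,\iota_\infty}(\pi)(c)$ by $\pm 1$) we may regard $\pi$ as a square-integrable Arthur parameter of the split group $\Sp_{n-1}$, of dual group $\SO_n(\C)$. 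If $n$ is even with $q$ even and $\eta_\infty(-1)=1$, then after twisting by $|\det|^{q/2}$ we regard $\pi$ as a cuspidal representation of $\GL_{2n}$ with $\pi^\vee\simeq(\eta\circ\det)\otimes\pi$, $\eta$ of finite order and $\eta_\infty(-1)=1$, orthogonal, i.e.\ a square-integrable parameter of an even special orthogonal group $G$ over $F$ attached to $\eta$, of dual group $\SO_{2n}(\C)$. By Arthur's endoscopic classification, $\pi$ transfers to an automorphic representation $\sigma$ of the classical group $G$ which, by regularity, has a member that is a discrete series at every real place; after choosing a refinement at $p$ this yields a classical point $x_0$ on the eigenvariety $\mathcal E$ of $G$ constructed earlier in the paper, at which the attached Galois representation equals $\rho_{\iota_p,\iota_\infty}(\pi)$ composed with the standard representation of $\widehat G$.

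\textit{The continuity mechanism.} On $\mathcal E$ there is a continuous $N$-dimensional pseudocharacter $T\colon G_F\to\mathcal O(\mathcal E)$ (with $N\in\{n,2n\}$) interpolating $\Tr(\rho_x(\cdot))$ over the classical points $x$. For a complex conjugation $c$ and any point $x$, the restriction of $T_x$ to $\langle c\rangle\cong\Z/2\Z$ is the character of an actual representation (pseudocharacters of finite groups in characteristic zero are genuine), whence $T_x(c)\in\Z$ and $|T_x(c)|\le N$; since $\mathcal E$ is reduced, $x\mapsto T_x(c)$ is then a locally constant function on $\mathcal E$. Consequently it suffices to produce classical points $x$ arbitrarily $p$-adically close to $x_0$ for which $|\Tr(\rho_x(c))|\le 1$ is already known.

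\textit{Abundance of (quasi-)irreducible points, and conclusion.} The heart of the matter --- and the main statement about the eigenvarieties announced in the introduction --- is that $x_0$ lies in the closure of the set of classical points $x\in\mathcal E$ at which $\rho_x$ decomposes as a direct sum of pairwise non-isomorphic, self-dual, irreducible pieces that one can control. One proves this by combining: the accumulation of classical points, built into $\mathcal E$; the observation, read off from Arthur's classification, that the classical points whose parameter has a non-trivial $\SL_2$-factor satisfy extra relations between their Hodge--Tate weights and Satake parameters and so form a thin set, so that tempered classical points still accumulate at $x_0$; and the possibility of moving within $\mathcal E$ --- varying the weights and the tame level, e.g.\ imposing the Steinberg condition at an auxiliary finite place, which $\mathcal E$ detects --- so as to reach tempered classical points whose parameter is a sum of distinct cuspidal self-dual representations $\Pi_i$ of small general linear groups, giving $\rho_x=\bigoplus_i\rho_{\iota_p,\iota_\infty}(\Pi_i)$ with each summand irreducible by the known cases of irreducibility. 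At such an $x_1$, Taylor's theorem applies to the odd-dimensional $\Pi_i$ to give $\Tr(\rho_{\iota_p,\iota_\infty}(\Pi_i)(c))=\pm 1$, while the even-dimensional summands, being attached to small essentially self-dual cuspidal representations of orthogonal type, satisfy $\Tr(\rho_{\iota_p,\iota_\infty}(\Pi_i)(c))=0$ (for instance the $2$-dimensional ones are dihedral and, under our sign hypothesis, automorphically induced from a field imaginary at the relevant real places); summing and using that $\sum_i\dim\rho_{\iota_p,\iota_\infty}(\Pi_i)\in\{n,2n\}$ has the correct parity yields $|\Tr(\rho_{x_1}(c))|\le 1$. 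By local constancy of $x\mapsto T_x(c)$ and $x_1\to x_0$ we conclude $|\Tr(\rho_{\iota_p,\iota_\infty}(\pi)(c))|\le 1$.

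\textit{Expected obstacle.} The decisive difficulty is the production, $p$-adically close to a prescribed point, of classical points whose Galois representation splits into pieces one can handle --- irreducibility of the Galois representations of general cuspidal self-dual representations of $\GL_N/F$ being unknown. This is exactly why one works on the classical-group eigenvariety and leans on Arthur's classification together with auxiliary local conditions, rather than on a cuspidal eigenvariety for $\GL_N/F$, which does not exist because $\GL_N/F$ has no discrete series at the real places; interfacing Arthur's multiplicity formula with the $p$-adic family --- so that one really does reach the desired nearby points and correctly identifies the Galois representations there --- is where most of the work will lie.
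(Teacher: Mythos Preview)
Your high-level architecture is right --- put the problem on a classical-group eigenvariety, use the continuous pseudocharacter, and exploit local constancy of $T(c)$ --- and your continuity paragraph is fine. But the decisive middle step, your ``Abundance of (quasi-)irreducible points'', is a sketch that does not match the paper's mechanism and, as written, does not work.

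Two concrete gaps. First, the transfer: you send $\pi$ itself to a classical group (e.g.\ $\GL_n$, $n$ odd, to $\Sp_{n-1}$; $\GL_{2n}$ to an even orthogonal group). The paper instead \emph{adds an auxiliary cuspidal piece} $\pi_0$ (on $\GL_3$ in the even case, on $\GL_1$ or $\GL_3$ in the odd case) and transfers $\pi\boxplus\pi_0$. This is not cosmetic: it guarantees that at the starting point no Artin character occurs in the Galois representation, which is exactly what lets one rule out the ``Artin character $+$ irreducible'' alternative in the symplectic deformation; and in the odd-dimensional case it lands you on an orthogonal eigenvariety of the right parity ($\SO_{4m}$), where the compact-at-infinity inner form exists and where the deformed representation is genuinely irreducible (no Artin summand). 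Your direct transfer does not arrange either of these.

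Second, the deformation: the paper does \emph{not} reach nearby points whose Galois representation splits into pieces small enough for known irreducibility theorems to apply, nor does it change the tame level or impose Steinberg conditions. What it proves (Theorems \ref{theo:mainressympl} and \ref{theo:mainresorth}) is that by moving on the eigenvariety and repeatedly changing the \emph{refinement} (acting by specific Weyl group elements), one can force the crystalline Frobenius eigenvalues and Hodge--Tate weights at $p$ into a configuration where the \emph{local} representation at every $v\mid p$ is irreducible (or, in the symplectic case, irreducible up to a possible Artin character). Local irreducibility forces global irreducibility, so the deformed point comes from a single self-dual cuspidal and Taylor's theorem applies directly. The argument is a careful $p$-adic Hodge-theoretic one (admissibility inequalities for sub-$\varphi$-modules), not an Arthur-packet/level-raising one. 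Your paragraph does not supply anything playing this role, and the claim that the even-dimensional summands you would encounter are ``dihedral'' and hence have trace zero is unjustified.

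Finally, note the logical order: the paper proves the even-$n$ case first (via $\Sp_{2n+2}$ and Taylor), and then deduces the odd-$n$ case from the even case (via $\SO_{2n+2}$ or $\SO_{2n+4}$), not the two cases in parallel.
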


This is achieved thanks to the result of Taylor, Arthur's endoscopic transfer between twisted general linear groups and symplectic or orthogonal groups, and using eigenvarieties for these groups.
Let us describe the natural strategy that one might consider to prove the odd-dimensional case using these tools, to explain why it fails and how a detour through the even-dimensional case allows to conclude.

Let $\pi$ be a RLAESDC representation of $\GL_{2n+1}(\A_F)$.
Up to a twist by an algebraic character $\pi$ is self-dual and has trivial central character.
Conjecturally, there should be an associated self-dual Langlands parameter $\phi_{\pi} : \mathrm{L}_F \rightarrow \GL_{2n+1}(\C)$ where $\mathrm{L}_F$ is the conjectural Langlands group.
Up to conjugation, $\phi_{\pi}$ takes values in $\SO_{2n+1}(\C)$, and by functoriality there should be a discrete automorphic representation $\Pi$ of $\Sp_{2n}(\A_F)$ such that $\mathcal{LL}(\Pi_v)$ is equal to $\mathcal{LL}(\pi_v)$ via the inclusion $\SO_{2n+1}(\C)$ for any place of $F$ which is either archimedean or such that $\pi_v$ is unramified.
Arthur's results in his upcoming book \cite{Arthur} imply that this (in fact, much more) holds.
To construct $p$-adic families of automorphic representations (i.e.\ eigenvarieties) containing $\Pi$, it is preferrable to work with a group which is \emph{compact} at the real places of $F$, and work with representations having Iwahori-invariants at the $p$-adic places.
A suitable solvable base change allows to assume that $[F:\Q]$ is even and that $\pi_v$ has Iwahori-invariants for $v | p$.
The last chapter of \cite{Arthur} will allow to ``transfer'' $\pi$ to an automorphic representation $\Pi$ of $\Galg$, the inner form of $\Sp_{2n}$ which is split at the finite places and compact at the real places of $F$.
By \cite{Loe} (which generalizes \cite{TheseG}), the eigenvariety $\Xscr$ for $\Galg$ is available.
Thanks to \cite{Arthur}, one can associate $p$-adic Galois representations $\rho_{\iota_p,\iota_{\infty}}(\cdot)$ to automorphic representations of $\Galg$, yielding a family of Galois representations on $\Xscr$, that is to say a continuous map $T : G_F \rightarrow \Ocal(\Xscr)$ which specializes to $\Tr\left( \rho_{\iota_p,\iota_{\infty}}(\cdot)\right)$ at the points of $\Xscr$ corresponding to automorphic representations of $\Galg(\A_F)$. 
One can then hope to prove a result similar to \cite[Lemma 3.3]{BC}, i.e.\ show that one can ``deform'' $\Pi$ (on $\Xscr$) to reach a point corresponding to an automorphic representation $\Pi'$ whose Galois representation is irreducible (even when restricted to the decomposition group of a $p$-adic place of $F$).
Since $\rho_{\iota_p,\iota_{\infty}}(\Pi')$ comes from an automorphic representation $\pi'$ of $\GL_{2n+1}$, $\pi'$ is necessarily cuspidal and satisfies the hypotheses of Taylor's theorem.
Since $T(c_v)$ is locally constant on $\Xscr$, we would be done.

Unfortunately, it does not appear to be possible to reach a representation $\Pi'$ whose Galois representation is irreducible by using local arguments on the eigenvariety.
However we will prove the following, which includes the case of some even-dimensional special orthogonal groups as it will be needed later:

\begin{theointro}[Theorem \ref{theo:mainressympl}, Theorem \ref{theo:mainresorth}]
\label{theo:thmb}
Let $\Galg$ be an inner form of $\Sp_{2n}$ or $\SO_{4n}$ over a totally real number field, compact at the real places and split at the $p$-adic ones.
Let $\Pi$ be an irreducible automorphic representation of $\Galg(\A_F)$ having Iwahori invariants at all the places of $F$ above $p$, and having invariants under an open subgroup $U$ of $\Galg(\A_{F,f}^{(p)})$.
Let $\rho_{\iota_p,\iota_{\infty}}(\Pi)$ denote the $p$-adic representation of the absolute Galois group $G_F$ of $F$ associated with $\Pi$ and embeddings $\iota_p : \Qbar \hookrightarrow \Qpbar$, $\iota_{\infty} : \Qbar \hookrightarrow \C$.
Let $N$ be an integer.
There exists an automorphic representation $\Pi'$ of $\Galg(\A_F)$ such that:
\begin{itemize}
\item $\Pi'$ is unramified at the places above $p$, and has invariants under $U$;
\item The restriction of $\rho_{\iota_p,\iota_{\infty}}(\Pi')$ to the decomposition group at any place above $p$ is either irreducible or the sum of an Artin character and an irreducible representation of dimension $2n$ (the latter occurring only in the symplectic case);
\item For all $g$ in $G_F$, $\Tr(\rho_{\iota_p,\iota_{\infty}}(\Pi')(g)) \equiv \Tr(\rho_{\iota_p,\iota_{\infty}}(\Pi)(g)) \mod p^N$.
\end{itemize}
\end{theointro}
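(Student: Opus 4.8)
The plan is to work on the eigenvariety $\Xscr$ for $\Galg$ constructed by Loeffler, and to deform $\Pi$ $p$-adically while keeping careful track of the associated Galois (pseudo)representations modulo $p^N$. First I would fix a point $x_0 \in \Xscr$ corresponding to $\Pi$; since $\Pi$ has Iwahori invariants at all places above $p$, such a point exists, and after possibly replacing $\Pi$ by a $p$-stabilization (a choice of refinement) I may assume $x_0$ lies on the ``accessible'' part of $\Xscr$ where the Hecke eigensystem varies analytically. The weight map $\Xscr \to \Wscr$ is locally finite and $\Wscr$ is an open subset of a product of copies of weight space, of dimension $= [F:\Q] \cdot \mathrm{rank}$; crucially the image of $x_0$ has a neighbourhood in $\Wscr$ that contains $p$-adically dense sets of \emph{very regular}, \emph{dominant} classical weights. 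The idea is to choose a new classical weight $\lambda'$ that is ($p$-adically) close to the weight of $\Pi$ — close enough that the congruence $\Tr(\rho_{\iota_p,\iota_{\infty}}(\Pi')(g)) \equiv \Tr(\rho_{\iota_p,\iota_{\infty}}(\Pi)(g)) \bmod p^N$ is forced by the continuity of the family $T : G_F \to \Ocal(\Xscr)$ on a suitable affinoid — but with $\lambda'$ chosen to be ``generic'' in a precise sense that rules out the possibility of the Galois representation decomposing in a bad way.

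The key mechanism forcing quasi-irreducibility is the interplay between the $p$-adic Hodge–Tate–Sen weights (which near $x_0$ are, up to a shift, the coordinates of the weight, hence can be made extremely spread out by choosing $\lambda'$ with huge gaps between its coordinates) and the structure of the local Galois representation at a place $v \mid p$. Concretely: the family $\rho_{\iota_p,\iota_{\infty}}(\cdot)|_{G_{F_v}}$ near the unramified (or Iwahori) point is trianguline/crystalline-up-to-refinement, and Kedlaya–Pottharst–Xiao-type results (or a direct Sen-theory argument on the eigenvariety) show its Sen weights are the weight coordinates. If $\rho_{\iota_p,\iota_{\infty}}(\Pi')|_{G_{F_v}}$ were to split as a direct sum of two nonzero pieces, both pieces would be de Rham (after the base change making $\Pi'$ crystalline at $v$, i.e.\ unramified there) with Hodge–Tate weights a partition of the set of weight-coordinates; the self-duality of $\Pi'$ (inherited from the symplectic/orthogonal nature of $\Galg$, so the Galois representation is $\mathrm{Sp}$- or $\mathrm{SO}$-valued) together with crystallinity and a careful count of Frobenius eigenvalues and Hodge–Tate weights forces any nontrivial decomposition to isolate at most a one-dimensional (necessarily Artin, since its single Hodge–Tate weight must be forced to $0$ by the self-duality and the weight genericity) summand. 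This is where the $\SO_{4n}$ case enters: the argument for $\Sp_{2n}$ produces a $\pi'$ on $\GL_{2n+1}$ which need not be cuspidal, and one salvages it by the fact that any isobaric constituent is itself RLAESDC of smaller (even) dimension attached to an $\SO$-type group — so Theorem~\ref{theo:thmb} is proved for $\Sp_{2n}$ and $\SO_{4n}$ simultaneously by an induction on $n$, the even orthogonal case feeding the symplectic one and vice versa.

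In more detail, the steps I would carry out are: (1) Perform a solvable totally real base change so that $[F:\Q]$ is even, $\Galg$ has the stated form, and $\Pi_v$ has Iwahori invariants for $v \mid p$ — and verify (via Arthur, Theorem~\ref{theo:galrepgl}-style descent) that base change does not destroy the conclusion, i.e.\ it suffices to prove the trace bound after base change. (2) Build the affinoid neighbourhood $\mathrm{Sp}(A) \subset \Xscr$ of $x_0$ on which $T$ lands in $A^{\circ}$ and on which the weight map is finite flat onto an affinoid in $\Wscr$; shrink so that for some $M$ depending on $N$, any two points whose weights agree mod $p^M$ have traces agreeing mod $p^N$. (3) Use the density of very regular classical dominant weights (Loeffler's classicality criterion: small slope $+$ regular dominant weight $\Rightarrow$ classical) to find a classical point $x' \in \mathrm{Sp}(A)$, with weight $\lambda' \equiv \lambda \bmod p^M$, whose weight coordinates at some chosen $v \mid p$ have pairwise differences that are $p$-adically large and ``spread out'' — here I also arrange that the refinement at $x'$ is \emph{non-critical} and of distinct-slope type, so that the local Galois representation at $v$ is crystalline (after an auxiliary unramified twist/base change making $\Pi'_v$ unramified) with distinct, appropriately-sized Hodge–Tate weights and Frobenius eigenvalues. (4) Run the $p$-adic Hodge theory argument sketched above to show $\rho_{\iota_p,\iota_{\infty}}(\Pi')|_{G_{F_v}}$ is irreducible or (symplectic case) Artin character $\oplus$ irreducible $2n$-dimensional. (5) Conclude, noting the congruence from step (2) and that $\Pi'$ is unramified at $p$ with $U$-invariants by construction.

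The main obstacle — and the part requiring the most care — is step (4): ruling out \emph{all} nontrivial decompositions of the local Galois representation using only the freedom we have in choosing $\lambda'$ within a fixed residue class mod $p^M$. The constraint is real: mod $p^M$ the weight is pinned down, so I cannot make \emph{all} Hodge–Tate weight gaps large, only gaps that are already forced to grow (I can add large multiples of $p^M$ to selected coordinates while preserving dominance and the residue class). I expect to need a combinatorial lemma saying that any $\SO_{2n+1}$-valued (resp.\ $\Sp_{2n}$-valued) crystalline representation whose Hodge–Tate weights are a strictly dominant tuple with one coordinate enormously larger than all others, and whose crystalline Frobenius eigenvalues are compatible with a fixed refinement, cannot be written as a nontrivial direct sum unless the extreme-weight line splits off as a character — and this character is then Artin by the duality pairing forcing its Hodge–Tate weight to vanish, which contradicts its being the extreme weight unless that extreme weight was itself $0$, i.e.\ unless the would-be summand is the trivial-weight line. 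Making this dichotomy airtight, and checking that the induction on $n$ (symplectic $\leftrightarrow$ even orthogonal) closes without circularity, is the crux of the proof.
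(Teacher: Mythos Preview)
Your outline captures the general shape of the argument — work on the eigenvariety, deform to a classical point with spread-out Hodge–Tate weights, use $p$-adic Hodge theory to constrain decompositions — but it misses the essential mechanism, and your proposed fix in step (4) (make one weight coordinate enormous, then argue combinatorially) will not work.

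The obstacle you correctly identify is genuine: the \emph{slopes} $v_p(\phi_{v,i})$ (valuations of the crystalline Frobenius eigenvalues, normalised by the weights) are locally constant on the eigenvariety, so a single deformation from $x_0$ cannot alter them. Making the Hodge–Tate weight gaps large while the slopes stay fixed does let you apply the crystalline lemma of the paper (any admissible sub-$\varphi$-module then carries a predictable subset of the $\tau$-Hodge–Tate weights), but that lemma by itself does \emph{not} exclude, say, an $n$-dimensional summand: it only pins down which weights the summand would carry, not that no such summand exists. To rule it out you need inequalities on the $v_p(\phi_{v,i})$ themselves, and those you cannot manufacture in one step.

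The paper's proof uses \emph{three} deformation steps, interleaved with \emph{two} changes of refinement by explicit Weyl group elements. At a completely refinable classical point $\Pi_1$, the same automorphic representation gives $|W|$ distinct points on $\Xscr$, and passing from one refinement $x_1$ to another $x_1'$ (via some $w \in W$) shifts the functions $\phi_{v,i}$ by an amount depending on the \emph{weights} at $x_1$. So: deform ($x \to x_1$) to make certain weight combinations large; change refinement ($x_1 \to x_1'$, using $w = -\mathrm{Id}$) so that these large weights are absorbed into the new $v_p(\phi_{v,i}(x_1'))$; deform again ($x_1' \to x_2$); change refinement again ($x_2 \to x_2'$, using a cyclic shift in $W$); deform once more ($x_2' \to x_3$). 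At $x_3$ the accumulated inequalities on the $v_p(\phi_{v,i})$ are strong enough that the admissibility conditions for any candidate splitting $I \sqcup J = \{-n,\ldots,n\}$ force $I$ or $J$ to equal $\{0\}$. This iterated deform/change-refinement loop is the heart of the argument and is absent from your proposal.

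Two smaller points. First, there is no induction on $n$ between the symplectic and orthogonal cases in the proof of \emph{this} theorem: the two cases are proved independently by the same three-step mechanism (the orthogonal case is actually cleaner, since there is no zero Hodge–Tate weight and hence no possible Artin summand). The interplay between $\Sp$ and $\SO$ occurs only later, in the application to complex conjugations. Second, your step (1) (solvable base change to make $[F:\Q]$ even) is not part of the proof of this theorem at all; $\Galg$ and $F$ are fixed in the hypothesis.
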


The possible presence of an Artin character (in the case of inner forms of $\Sp_{2n}$) comes from the fact that the ``standard'' representation of $\SO_{2n+1}(\C)$ in $\GL_{2n+1}(\C)$ is not minuscule: the set of characters of a torus $T(\C)$ of $\SO_{2n+1}(\C)$ in this representation has two orbits under the Weyl group, one of which contains only the trivial character.
The key fact allowing to prove the above theorem is that classical points on the eigenvariety for $\Galg$ correspond to automorphic representations $\Pi$ of $\Galg(\A_F)$ (say, unramified at the $p$-adic places) \emph{and} a refinement of each $\Pi_v$, $v|p$, that is to say a particular element in $T(\C)$ in the conjugacy class of the Satake parameter of $\Pi_v$.
The variation of the crystalline Frobenius of $\rho_{\iota_p,\iota_{\infty}}(\cdot)$ on the eigenvariety with respect to the weight and the freedom to change the refinement (by the action of the Weyl group) are at the heart of the proof of Theorem \ref{theo:thmb}.

Although the strategy outlined above fails, Theorem \ref{theo:thma} can be deduced from Theorem \ref{theo:thmb}.
Indeed the precise description of the discrete automorphic spectrum of symplectic and orthogonal groups by Arthur shows that formal sums of distinct cuspidal self-dual representations of general linear groups ``contribute'' to this spectrum.
The even-dimensional case in Theorem \ref{theo:thma} will be proved by transferring $\pi \boxplus \pi_0$, where $\pi,\pi_0$ are regular, L-algebraic, self-dual, cuspidal representations of $\GL_{2n}(\A_F)$ (resp.\ $\GL_3(\A_F)$) with distinct weights at any real place of $F$, to an automorphic representation $\Pi$ of an inner form $\Galg$ of $\Sp_{2n+2}/F$.
Since $\rho_{\iota_p,\iota_{\infty}}(\pi)\oplus \rho_{\iota_p,\iota_{\infty}}(\pi_0)$ does not contain any Artin character (the zero Hodge-Tate weights come from $\rho_{\iota_p,\iota_{\infty}}(\pi_0)$, which is known to be irreducible), for big enough $N$ any representation $\Pi'$ as in \ref{theo:thmb} has an irreducible Galois representation.

To treat the original case of a regular, L-algebraic, self-dual, cuspidal representation of $\GL_{2n+1}(\A_F)$ having trivial central character, we appeal to Theorem \ref{theo:thmb} for special orthogonal groups.
For example, if $n$ is odd, $\pi \boxplus \pi_0$, where $\pi_0$ is the trivial character of $\A_F^{\times}/F^{\times}$, contributes to the automorphic spectrum of $\Galg$, which is now the special orthogonal group of a quadratic form on $F^{2n+2}$ which is definite at the real places and split at the finite places of $F$.
Note that $\pi \boxplus \pi_0$ is not regular: the zero weight appears twice at each real place of $F$.
However the Langlands parameters of representations of the compact group $\SO_{2n+2}(\R)$ are of the form
\[ \bigoplus_{i=1}^{n+1} \Ind_{W_{\R}}^{W_{\C}} \left( z \mapsto (z/\bar{z})^{k_i} \right) \]
when composed with $\SO_{2n+2}(\C) \hookrightarrow \GL_{2n+2}(\C)$, with $k_1 > \ldots > k_{n+1} \geq 0$.
Moreover $\mathcal{LL}\left((\pi \boxplus \pi_0)_v \right)$ is of the above form, with $k_{n+1}=0$.
The rest of the proof is identical to the odd-dimensional case.

This fact also shows that some \emph{non-regular}, L-algebraic, self-dual, cuspidal representations of $\GL_{2n}(\A_F)$ contribute to the automorphic spectrum of $\Galg$.
Consequently we can also extend Taylor's result to the Galois representations associated with these slightly non-regular automorphic representations.
These Galois representations were shown to exist by Wushi Goldring \cite{Goldring}.

We now fix some notations for the rest of the article.
The valuation $v_p$ of $\Qpbar$ is the one sending $p$ to $1$, and $| \cdot |$ will denote the norm $p^{-v_p(\cdot)}$.
All the number fields in the paper will sit inside $\Qbar$.
We have chosen arbitrary embeddings $\iota_p : \overline{\Q} \hookrightarrow \Qpbar$, $\iota_{\infty} : \overline{\Q} \hookrightarrow \C$.
In fact, the constructions will only depend on the identification between the algebraic closures of $\Q$ in $\Qpbar$ and $\C$ (informally, $\iota_p \iota_{\infty}^{-1}$).
Observe that the choice of a $p$-adic place $v$ of a number field $F$ and of an embedding $F_v \hookrightarrow \Qpbar$ is equivalent, via $\iota_p$, to the choice of an embedding $F \hookrightarrow \Qbar$.
The same holds for the infinite places and $\iota_{\infty}$.
Thus if $F$ is totally real, $\iota_p \iota_{\infty}^{-1}$ defines a bijection between the set of infinite places of $F$ and the set of $p$-adic places $v$ of $F$ together with an embedding $F_v \hookrightarrow \Qpbar$.
The eigenvarieties will be rigid analytic spaces (in the sense of Tate).
If $\Xscr$ is a rigid analytic space over a finite extension $E$ of $\Qp$, $|\Xscr|$ will denote its points.

I would like to thank Gaëtan Chenevier for introducing me to this fascinating subject, Colette Moeglin and Jean-Loup Waldspurger for Lemma \ref{lemm:iwahoritransfersp}, and James Arthur for his help.

\section{Assumptions on the forthcoming chapter of Arthur's book}

As the results of this paper rely on the main theorem of chapter 9 of \cite{Arthur} (the case of inner forms of quasi-split classical groups), which is not yet available, we have stated some properties as assumptions: Assumptions \ref{prop:transfersp}, \ref{lemm:transfergl2sp}, \ref{lemm:transfergl2so} and \ref{lemm:transfernonreg}.
These will all be consequences of the main theorem of \cite[Chapter 9]{Arthur}, very similar to \cite[Theorem 1.5.2]{Arthur}.
In fact the difference will only be of local nature (see \cite{ArthurNote} and \cite{Kottwitz}).
Since the inner forms of classical groups we will consider are split at all the finite places, to state the analog of \cite[Theorem 1.5.2]{Arthur} in these cases one would only need to know how to normalize the local A-packets, i.e.\ fix a Whittaker functional at each finite place and specify a ``strong form'' above the relevant inner form at the archimedean places.
We are unable to guess what global choice could determine these compatible local normalizations in general, except for the case of \emph{pure} inner forms.
This is the case for the orthogonal groups of section \ref{section:orth}, but not for the ``symplectic'' groups of section \ref{section:sympl}.

A subsequent version of this paper will have the assumptions replaced by actual propositions or lemmas.

\section{The eigenvariety for definite symplectic groups}
\label{section:sympl}

In this section we recall the main result of \cite{Loe} in our particular case (existence of the eigenvariety for symplectic groups), and show that the points corresponding to unramified, ``completely refinable'' automorphic forms, with weight far from the walls, are ``dense'' in this eigenvariety.

\subsection{The eigenvariety}

\subsubsection{Symplectic groups compact at the archimedean places}

Let $F$ be a totally real number field of even degree over $\Q$, and let $D$ be a quaternion algebra over $F$, unramified at all the finite places of $F$ ($F_v \otimes_{F} D \simeq \mathrm{M}_2(F_v)$), and definite at all the real places of $F$. Such a $D$ exists thanks to the exact sequence relation the Brauer groups of $F$ and the $F_v$.
Let $n$ be a positive integer, and let $\Galg$ be the algebraic group over $F$ defined by the equation $M^* M = \mathrm{I}_n$ for $M \in \mathrm{M}_n(D)$, where $\left( M^* \right)_{i,j} = M_{j,i}^*$, and $\cdot^*$ denotes conjugation in $D$.

Then $\Galg \left( F \otimes_{\Q} \R \right)$ is a compact Lie group, and for all finite places $v$ of $F$, $\Galg \times_F F_v \simeq \Spalg_{2n} /F_v$.

Fix a prime $p$.
We will apply the results of \cite{Loe} to the group $\Galg'=\mathrm{Res}^F_{\Q} \Galg$.
Let $E$ be a finite and Galois extension of $\Qp$, containing all the $F_v$ ($v$ over $p$).

\subsubsection{The Atkin-Lehner algebra}

The algebraic group $\Galg' \times_{\Q} \Qp = \prod_{v | p} \Galg \times_{\Q} F_v$ (where $v$ runs over the places of $F$) is isomorphic to $\prod_{v | p} \mathrm{Res}_{\Qp}^{F_v} \Spalg_{2n}/F_v$, which is quasi-split but not split in general.
The algebraic group $\Spalg_{2n}$ is defined over $\Z$ by the equation $^tMJM=J$ in $\mathrm{M}_{2n}$, where $J=\begin{pmatrix}
0 & J_n \\ 
-J_n & 0
\end{pmatrix} $ and $J_n = \begin{pmatrix}
0 &  & 1\\
 & \iddots & \\
1 &  & 0
\end{pmatrix}$.
We define its algebraic subgroups $\Talg_v$, $\Balg_v$, $\bar{\Balg}_v$, $\Nalg_v$, $\bar{\Nalg}_v$ of diagonal, upper triangular, lower triangular, unipotent upper triangular, and unipotent lower triangular matrices of $\Res_{\Qp}^{F_v} \Sp_{2n}/F_v$, and let $\Talg = \prod_{v | p} \Talg_v$, $\Balg = \prod_{v | p} \Balg_v$, and so on.
In \cite[2.4]{Loe}, only the action of the maximal split torus of $\Galg' \times_{\Q} \Qp$ is considered.
For our purpose, we will need to extend this and consider the action of a maximal (non-split in general) torus, that is $\Talg$, instead of a maximal split torus $\Salg \subset \Talg$.
The results in \cite{Loe} are easily extended to this bigger torus, essentially because $\Talg(\Qp) \slash \Salg(\Qp)$ is compact.
Moreover, we let $I_v$ be the compact subgroup of $\Spalg_{2n}\left(\Ocal_v\right)$ consisting of matrices with invertible diagonal elements and elements of positive valuation below the diagonal.
Finally, following Loeffler's notation, we let $G_0= \prod_{v | p} I_v$. It is an Iwahori sugroup of $\Galg'(\Qp)$ having an Iwahori decomposition: $G_0 \simeq \bar{N}_0 T_0 N_0$ where $*_0 = *(\Qp) \cap G_0$.

For each place $v$ of $F$ above $p$, let us choose a uniformizer $\varpi_v$ of $F_v$.
Let $\Sigma_v$ be the subgroup of $\Spalg_{2n}(F_v)$ consisting of diagonal matrices whose diagonal elements are powers of $\varpi_v$, i.e.\ matrices of the form
\[\begin{pmatrix}
\varpi_v^{r_1} &  &  &  &  &  \\ 
 & \ddots &  &  &  &  \\ 
 &  & \varpi_v^{r_n} &  &  &  \\ 
 &  &  & \varpi_v^{-r_n} &  &  \\ 
 &  &  &  & \ddots &  \\ 
 &  &  &  &  & \varpi_v^{-r_1}
\end{pmatrix} \]
Let $\Sigma_v^+$ be the submonoid of $\Sigma_v$ whose elements satisfy $r_1 \leq \ldots \leq r_n \leq 0$, and $\Sigma_v^{++}$ the one whose elements satisfy $r_1 < \ldots < r_n < 0$.
Naturally, we set $\Sigma = \prod_{v | p} \Sigma_v$, and similarly for $\Sigma^+$ and $\Sigma^{++}$.

The \emph{Atkin-Lehner algebra} $\mathcal{H}^+_p$ is defined as the subalgebra of the Hecke-Iwahori algebra $\mathcal{H}(G_0 \backslash \Galg'(\Qp) / G_0 )$ (over $\Q$) generated by the characteristic functions $\left[ G_0 u G_0\right]$, for $u \in \Sigma^+$.
Let $\mathcal{H}_p$ be the subalgebra of $\mathcal{H}(G_0 \backslash \Galg'(\Qp) / G_0 )$ generated by the characteristic functions $\left[ G_0 u G_0\right]$ and their inverses, for $u \in \Sigma^+$ (in \cite{IwahoriMatsumoto}, a presentation of the Hecke-Iwahori algebra is given, which shows that $\left[ G_0 u G_0 \right]$ is invertible if $p$ is invertible in the ring of coefficients).

If $S^p$ is a finite set of finite places of $F$ not containing those over $p$, let $\Hcal^S$ be the Hecke algebra (over $\Q$)
\[ \sideset{}{'} \bigotimes_{w \notin S^p \cup S_p \cup S_{\infty}} \Hcal(\Galg(\Ocal_{F_w}) \backslash \Galg(F_w) / \Galg(\Ocal_{F_w})) \]
where $S_*$ denotes the set of places above $*$.
This Hecke algebra has unit $e^S$.
Let $\Hcal_S^p$ be a commutative subalgebra of $\bigotimes_{w \in S^p} \Hcal(\Galg(F_w))$, with unit $e_{S^p}$.

Finally, we let $\Hcal^+ = \Hcal^+_p \otimes \Hcal_{S^p} \otimes \Hcal^S$, $\Hcal = \Hcal_p \otimes \Hcal_{S^p} \otimes \Hcal^S$ and $e = e_{G_0} \otimes e_{S^p} \otimes e^S$.

\subsubsection{\texorpdfstring{$p$}{p}-adic automorphic forms}

The construction in \cite{Loe} depends on the choice of a parabolic subgroup $\mathbf{P}$ of $\Galg'$ and a representation $V$ of a compact subgroup of the Levi quotient $\mathbf{M}$ of $\mathbf{P}$.
The parabolic subgroup we consider here is the Borel subgroup $\Balg$, and thus, using Loeffler's notation, $\Talg=\mathbf{M}$ is a maximal (non-split in general) torus contained in $\Balg$.
The representation $V$ is taken to be trivial.

The weight space $\Wscr$ is the rigid space (over $E$, but it is well-defined over $\Qp$) parametrizing locally $\Qp$-analytic (equivalently, continuous) characters of $T_0 \simeq \left( \prod_{v | p} \Ocal_v^{\times} \right)^n$.
As $1 + \varpi_v \Ocal_v$ is isomorphic to $\left(\mu_{p^{\infty}} \cap F_v^{\times} \right) \times \Zp^{[F_v : \Qp]}$, $\Wscr$ is the product of an open polydisc of dimension $n[F : \Q]$ and a rigid space finite over $E$.

The construction in \cite{Loe} defines the $k$-analytic ($(G_k)_{k \geq 0}$ being a filtration of $G_0$) parabolic induction from $T_0$ to $G_0$ of the ``universal character'' $\chi : T_0 \rightarrow \Ocal(\Wscr)^{\times}$, denoted by $\Ccal(\mathscr{U},k)$ ($k$ big enough such that $\chi$ is $k$-analytic on the open affinoid $\mathscr{U}$), which interpolates $p$-adically the restriction to $\Galg'(\Qp)$ of algebraic representations of $\Galg' (\Qpbar)$.
From there one can define the spaces $M(e,\mathscr{U},k)$ (\cite[Definition 3.7.1]{Loe}) of $p$-adic automorphic forms (or overconvergent automorphic forms, by analogy with the rigid-geometric case of modular forms) above an open affinoid or a point $\mathscr{U}$ of $\Wscr$ which are $k$-analytic and fixed by the idempotent $e$.
This space has an action of $\Hcal^+$.
By \cite[Corollary 3.7.3]{Loe}, when considering $p$-adic automorphic forms which are eigenvectors for $[G_0uG_0]$ for some $u \in \Sigma^{++}$ and for a non-zero eigenvalue (``finite slope'' $p$-adic eigenforms), one can forget about $k$, and we will do so in the sequel.

\subsubsection{Existence and properties of the eigenvariety}

We choose the element
\[ \eta = \left( \begin{pmatrix}
\varpi_v^{-n} &  &  &  &  &  \\ 
 & \ddots &  &  &  &  \\ 
 &  & \varpi_v^{-1} &  &  &  \\ 
 &  &  & \varpi_v &  &  \\ 
 &  &  &  & \ddots &  \\ 
 &  &  &  &  & \varpi_v^n
\end{pmatrix} \right)_v \in \Sigma^{++} \]

\begin{theo}
\label{theo:existeigen}
There exists a reduced rigid space $\Xscr$ over $E$, together with an $E$-algebra morphism $\Psi : \Hcal^+ \rightarrow \Ocal(\Xscr)^{\times}$ and a morphism of rigid spaces $w : \Xscr \rightarrow \Wscr$ such that:
\begin{enumerate}
\item The morphism $\left(w,\Psi([G_0 \eta G_0])^{-1} \right): \Xscr \rightarrow \Wscr \times \mathbb{G}_m$ is finite
\item For each point $x$ of $\Xscr$, $\Psi \otimes w^{\natural} : \Hcal^+ \otimes_E \Ocal_{w(x)} \rightarrow \Ocal_x$ is surjective
\item For every finite extension $E'/E$, $\Xscr(E')$ is in bijection with the finite slope systems of eigenvalues of $\Hcal^+$ acting on the space of ``overconvergent'' automorphic forms, via evaluation of the image of $\Psi$ at a given point.
\end{enumerate}
Moreover, for any point $x \in |\Xscr|$, there is an arbitrarily small open affinoid $\mathscr{V}$ containing $x$ and an open affinoid $\mathscr{U}$ of $\Wscr$ such that $\mathscr{V} \subset w^{-1}(\mathscr{U})$, the morphism $w|_{\mathscr{V}} : \mathscr{V} \rightarrow \mathscr{U}$ is finite, and surjective when restricted to any irreducible component of $\mathscr{V}$.
\end{theo}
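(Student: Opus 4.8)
The plan is to construct the eigenvariety $\Xscr$ by applying the machinery of \cite{Loe} (which itself generalizes the Buzzard eigenvariety machine and \cite{TheseG}) to the distribution modules of overconvergent automorphic forms for $\Galg'=\Res^F_{\Q}\Galg$, with the one modification that the torus acting is the full (non-split) $\Talg$ rather than the maximal split subtorus $\Salg$. First I would check that this enlargement is harmless: since $\Talg(\Qp)/\Salg(\Qp)$ is compact, the universal character $\chi$ of $T_0$ still extends $p$-adically the algebraic characters, the weight space $\Wscr$ parametrizing continuous characters of $T_0\simeq(\prod_{v|p}\Ocal_v^{\times})^n$ is still the product of an open polydisc and a space finite over $E$, and the spaces $M(e,\mathscr{U},k)$ carry a continuous action of $\Hcal^+$ with the Atkin--Lehner operator $U_\eta:=[G_0\eta G_0]$ acting compactly, because $\eta\in\Sigma^{++}$ and the usual contraction estimates on the Iwahori decomposition $G_0\simeq\bar N_0T_0N_0$ apply verbatim.

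Next I would feed the Fredholm theory into Buzzard's eigenvariety construction. The operator $U_\eta$ acting on $M(e,\mathscr{U},k)$ has a Fredholm determinant $\det(1-TU_\eta)\in\Ocal(\Wscr)\{\{T\}\}$, independent of $k$ by \cite[Corollary 3.7.3]{Loe}, and its zero locus is the spectral variety $\Zscr\subset\Wscr\times\mathbb{G}_m$, finite flat over $\Wscr$ locally on admissible affinoid covers. Over each piece of an admissible covering of $\Zscr$ by affinoids on which the slope decomposition exists, the commutative $E$-algebra generated by the image of $\Hcal^+$ in the endomorphisms of the corresponding finite projective module glues into a coherent sheaf of algebras; its relative $\mathrm{Spec}$ (rigid analytification) is $\Xscr$, equipped with the finite map to $\Zscr$, hence the finite map $(w,\Psi(U_\eta)^{-1}):\Xscr\to\Wscr\times\mathbb{G}_m$, giving (1). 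One takes $\Xscr$ reduced by passing to the reduced subspace (this does not affect the point count). Property (2) is the standard assertion that $\Ocal_x$ is generated over $\Ocal_{w(x)}$ by the Hecke eigenvalues, immediate from the construction since $\Xscr$ is built as a relative spectrum of the Hecke algebra. Property (3), the interpolation of classical and more generally finite-slope systems of eigenvalues, follows because for every finite $E'/E$ a point of $\Xscr(E')$ is by construction a system of eigenvalues occurring in some $M(e,\mathscr{U},k)\otimes E'$, and conversely every finite-slope eigensystem on overconvergent forms appears; the ``forget $k$'' step is again \cite[Corollary 3.7.3]{Loe}. For the final ``moreover'' clause, one uses that $\Zscr\to\Wscr$ admits, near any point, an admissible affinoid $\mathscr{U}\subset\Wscr$ over which a connected component $\mathscr{Z}_0$ of the preimage in $\Zscr$ is finite flat, and one can shrink $\mathscr{U}$ so that an arbitrarily small affinoid neighbourhood $\mathscr{V}$ of $x$ in $\Xscr$ lies in $w^{-1}(\mathscr{U})$ and maps finitely to $\mathscr{U}$; surjectivity on each irreducible component of $\mathscr{V}$ is then the general fact that a finite map of rigid spaces with $\mathscr{U}$ normal (here $\mathscr{U}$ is smooth, being an open in $\Wscr$) is surjective on every irreducible component of the source that dominates—which here all do, after possibly further shrinking, because $U_\eta$ is invertible so no component is ``vertical''.

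The only genuinely non-formal point, and the one I expect to be the main obstacle, is verifying that Loeffler's construction—designed for the action of the maximal \emph{split} torus—goes through with $\Talg$ in place of $\Salg$ and still produces a space whose classical points see \emph{all} the refinements we will need later. Concretely one must check that the Atkin--Lehner algebra $\Hcal^+_p$ generated by the $[G_0uG_0]$, $u\in\Sigma^+$, acts on the $k$-analytic induction $\Ccal(\mathscr{U},k)$ compatibly with the enlarged weight space, that the compactness of $T_0/S_0$ (with $S_0=\Salg(\Qp)\cap G_0$) ensures the relevant modules remain orthonormalizable Banach $\Ocal(\mathscr{U})$-modules of the type required by Buzzard's machine, and that the slope-$\leq h$ decompositions are preserved. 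Once this bookkeeping is in place the rest is a direct transcription of \cite[§3]{Loe} and of Buzzard's eigenvariety formalism, so I would state the extension as a lemma referring to \cite{Loe} for the proofs that carry over unchanged and only spell out the modifications forced by replacing $\Salg$ with $\Talg$.
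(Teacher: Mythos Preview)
Your treatment of properties (1)--(3) is fine and matches the paper: these are indeed just \cite[Theorems 3.11.2 and 3.12.3]{Loe}, and the $\Salg$-versus-$\Talg$ issue is handled in the setup, not in this proof. The substance of the theorem is the ``moreover'' clause, and there your argument has two gaps.

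First, and most seriously, you assert that ``one can shrink $\mathscr{U}$ so that an arbitrarily small affinoid neighbourhood $\mathscr{V}$ of $x$ in $\Xscr$ lies in $w^{-1}(\mathscr{U})$ and maps finitely to $\mathscr{U}$''. This is exactly the nontrivial point. If you take $\mathscr{V}$ to be a connected component of $w^{-1}(\mathscr{U})$ (so that finiteness over $\mathscr{U}$ is automatic), you must explain why such components become arbitrarily small as $\mathscr{U}$ shrinks; in rigid geometry this is not formal. The paper isolates this as a separate lemma: for any finite morphism $f:\Xscr_1\to\Xscr_2$ of rigid spaces, the connected components of $f^{-1}(U)$, as $U$ ranges over admissible opens of $\Xscr_2$, form a basis for the canonical topology on $\Xscr_1$. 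The proof is an explicit argument with the maximum modulus principle and admissible coverings, separating the finitely many preimages of $f(x)$; you should supply this or an equivalent.

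Second, your surjectivity argument is too loose. Saying ``no component is vertical because $U_\eta$ is invertible'' does not by itself give surjectivity of each irreducible component onto $\mathscr{U}$. The paper's route is: (i) work first on the Fredholm hypersurface $\mathscr{Z}$, where the map to $\Wscr$ is \emph{flat}, hence open, and finite, hence closed, so each irreducible component of the chosen affinoid piece $\mathscr{V}_0\subset\mathscr{Z}$ surjects onto the irreducible (normal) $\mathscr{U}$; (ii) then pass from $\mathscr{Z}$ to $\Xscr$ using \cite[Lemme 6.2.10, Proposition 6.4.2]{TheseG}, which says each irreducible component of the preimage $\mathscr{V}\subset\Xscr$ maps onto an irreducible component of $\mathscr{V}_0$. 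Your sketch conflates these two steps; you should make the flatness of $\mathscr{Z}\to\Wscr$ explicit and invoke the appropriate lemma for the passage $\Xscr\to\mathscr{Z}$.
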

\begin{proof}
This is \cite[Theorems 3.11.2 and 3.12.3]{Loe}, except for the last assertion.
To prove it, we need to go back to the construction of the eigenvariety in \cite{Buzzard}.
Buzzard begins by constructing the Fredholm hypersurface $\mathscr{Z}$ (encoding only the value of $\Psi([G_0 \eta G_0])$), together with a flat morphism $\mathscr{Z} \rightarrow \Wscr$, before defining the finite morphism $\Xscr \rightarrow \mathscr{Z}$.
By \cite[Theorem 4.6]{Buzzard}, $\mathscr{Z}$ can be admissibly covered by its open affinoids $\mathscr{V}_0$ such that $w$ restricted to $\mathscr{V}_0$ induces a finite, surjective morphism to an open affinoid $\mathscr{U}$ of $\Wscr$, and $\mathscr{V}_0$ is a connected component of the pullback of $\mathscr{U}$.
We can assume that $\mathscr{U}$ is connected, and hence irreducible, since $\Wscr$ is normal.
The morphism $\mathscr{V}_0 \rightarrow \mathscr{U}$ is both open (since it is flat: \cite[Corollary 7.2]{Bosch}) and closed (since it is finite), so that any irreducible component of $\mathscr{V}_0$ is mapped onto $\mathscr{U}$.
This can be seen more naturally by observing that the irreducible components of $\mathscr{V}_0$ are also Fredholm hypersurfaces, by \cite[Theorem 4.3.2]{ConradIrr}.

By \cite[Proposition 6.4.2]{TheseG}, if $\mathscr{V}$ denotes the pullback to $\Xscr$ of $\mathscr{V}_0$, each irreducible component of $\mathscr{V}$ is mapped onto an irreducible component of $\mathscr{V}_0$ (more precisely, this is a consequence of \cite[Lemme 6.2.10]{TheseG}).
To conclude, we only need to show that if $x \in \mathscr{V}$, up to restricting $\mathscr{U}$, the connected component of $\mathscr{V}$ containing $x$ can be arbitrarily small.
This is a consequence of the following lemma.
\end{proof}

\begin{lemm}
Let $f : \Xscr_1 \rightarrow \Xscr_2$ be a finite morphism of rigid analytic spaces.
Then the connected components of $f^{-1}(U)$, for $U$ admissible open of $\Xscr_2$, form a basis for the canonical topology on $\Xscr_1$.
\end{lemm}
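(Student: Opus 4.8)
The plan is to use that the affinoid subdomains of affinoid opens form a basis of the canonical topology on $\Xscr_1$, so that it suffices to prove: for every $x \in \Xscr_1$ and every affinoid subdomain $W$ (inside some affinoid open of $\Xscr_1$) containing $x$, there is an admissible open $U \ni f(x)$ of $\Xscr_2$ such that the connected component of $f^{-1}(U)$ containing $x$ is contained in $W$ — note that a connected component of a rigid space is admissible open, hence open in the canonical topology, and is one of the sets in our proposed basis. The first step is a reduction to the affinoid case: pick an affinoid open $\mathrm{Sp}(A) \ni f(x)$ in $\Xscr_2$; since $f$ is finite, $f^{-1}(\mathrm{Sp}(A))$ is an affinoid $\mathrm{Sp}(B)$ with $B$ module-finite over $A$, and $f^{-1}(U) \subseteq \mathrm{Sp}(B)$ for every $U \subseteq \mathrm{Sp}(A)$. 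Replacing $\Xscr_1, \Xscr_2, W$ by $\mathrm{Sp}(B), \mathrm{Sp}(A), W \cap \mathrm{Sp}(B)$ we may assume $\Xscr_1 = \mathrm{Sp}(B) \to \Xscr_2 = \mathrm{Sp}(A)$ is finite with $B$ finite over $A$ and $W$ an affinoid subdomain of $\mathrm{Sp}(B)$.

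The second step reduces to the case where the fibre $f^{-1}(y)$ over $y := f(x)$ is the single point $\{x\}$. The finite $k(y)$-algebra $B \otimes_A k(y) = B/\mathfrak m_y B$ is a product of local artinian rings indexed by the finite fibre $f^{-1}(y) = \{x = x_1, \dots, x_m\}$; let $\bar e$ be the idempotent which is $1$ in the factor at $x$ and $0$ in the others. I would lift $\bar e$ to an idempotent $e$ of $\mathcal O(f^{-1}(U_0)) = B \widehat\otimes_A A_{U_0}$ for a sufficiently small affinoid subdomain $U_0 \ni y$, by the usual Newton iteration $e_{j+1} = 3 e_j^2 - 2 e_j^3$ started from any lift $\tilde e \in B$ of $\bar e$: the defect $\tilde e^2 - \tilde e$ lies in the finitely generated — hence closed — ideal $\mathfrak m_y B$, and its spectral norm on $f^{-1}(U_0)$ tends to $0$ as $U_0$ shrinks to $y$, so the iteration converges. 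The idempotent $e$ splits $f^{-1}(U_0) = V \sqcup V'$ with $x \in V$, with $V \cap f^{-1}(y) = \{x\}$, and with $V \to U_0$ again finite; since $V$ is open and closed in $f^{-1}(U_0)$, the connected component of $f^{-1}(U)$ through $x$ lies in $V$ for every admissible open $U \subseteq U_0$, so I may replace $(\Xscr_1, \Xscr_2, W)$ by $(V, U_0, W \cap V)$ and assume $f^{-1}(y) = \{x\}$.

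Now $B/\mathfrak m_y B$ is local artinian, hence $\mathfrak m_x^N \subseteq \mathfrak m_y B$ for some $N \ge 1$. Shrink $W$ further to an affinoid subdomain of the form $\{\,z : |h_k(z)| \le \delta,\ 1 \le k \le s\,\}$ with $h_k \in \mathfrak m_x$ (Weierstrass subdomains cut out by elements of $\mathfrak m_x$ form a neighbourhood basis of $x$). Writing $\mathfrak m_y = (a_1, \dots, a_r)$, with images $a_l$ in $B$, and setting $U_\epsilon := \{\,w \in \mathrm{Sp}(A) : |a_l(w)| \le \epsilon,\ 1 \le l \le r\,\}$, each $h_k^N$ lies in $\mathfrak m_y B = \sum_l B a_l$, say $h_k^N = \sum_l b_{k,l} a_l$; hence for $z \in f^{-1}(U_\epsilon)$,
\[ |h_k(z)|^N = \Big| \sum_{l} b_{k,l}(z)\, a_l(f(z)) \Big| \le \Big(\max_l \|b_{k,l}\|_{\sup}\Big)\,\epsilon =: M_k\,\epsilon, \]
so $|h_k(z)| \le (M_k \epsilon)^{1/N}$. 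Choosing $\epsilon$ so small that $(M_k \epsilon)^{1/N} \le \delta$ for every $k$ gives $f^{-1}(U_\epsilon) \subseteq W$, hence a fortiori the connected component of $f^{-1}(U_\epsilon)$ containing $x$ is contained in $W$; taking $U = U_\epsilon$ concludes.

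The main obstacle is the second step: as soon as $f^{-1}(y)$ has more than one point, $f^{-1}(U)$ can never be made small (it must contain the whole fibre), and one cannot simply fall back on a compactness argument, because the canonical topology is totally disconnected and the underlying space of an affinoid need not be quasi-compact in it; one is therefore forced to separate the fibre algebraically by lifting an idempotent, after which the nilpotence $\mathfrak m_x^N \subseteq \mathfrak m_y B$ together with the finite generation of $\mathfrak m_y$ does the shrinking in the third step. The ancillary facts invoked — preimages of affinoids under finite morphisms are affinoid, finitely generated ideals of affinoid algebras are closed, idempotents lift along $\mathfrak m_y B$ by Newton's method, and $x$ has a neighbourhood basis of Weierstrass subdomains cut out by elements of $\mathfrak m_x$ — are all standard.
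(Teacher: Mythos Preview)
Your argument is correct, but it proceeds along a different line from the paper's own proof.

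The paper does not lift idempotents. After the same reduction to the affinoid case $\mathrm{Sp}\,A_1 \to \mathrm{Sp}\,A_2$, it writes the fibre $f^{-1}(f(x_1))=\{x_1,\dots,x_m\}$, chooses generators $t_j$ of $\mathfrak m_{f(x_1)}$ and $r_k^{(i)}$ of $\mathfrak m_{x_i}$, and sets
\[
V_M=\Bigl\{x:\ \forall i\ \exists k,\ |r_k^{(i)}(x)|\ge p^{-M}\Bigr\},\qquad
\Omega_{j,N}=\{\,y:\ |t_j(y)|\ge p^{-N}\,\}.
\]
Since $V_M$ is a finite union of Laurent domains it is quasi-compact in the $G$-topology, and the $f^{-1}(\Omega_{j,N})$ admissibly cover $V_M$; extracting a uniform $N$ gives
\[
f^{-1}\bigl(\{\forall j:\ |t_j|\le p^{-N-1}\}\bigr)\ \subset\ \bigcup_i\,\{\forall k:\ |r_k^{(i)}|\le p^{-M}\},
\]
and for $M$ large the right-hand side is a disjoint union of small affinoid neighbourhoods of the $x_i$, so the connected component through $x_1$ is trapped in the one around $x_1$. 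In particular, the paper \emph{does} use a compactness argument (quasi-compactness of affinoids in the Grothendieck topology), so your remark that ``one cannot simply fall back on a compactness argument'' should be softened: what fails is compactness in the canonical (totally disconnected) topology, not compactness altogether.

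Your route --- split off $x$ from the rest of the fibre by Hensel/Newton lifting of the fibre idempotent over a small $U_0$, then use $\mathfrak m_x^N\subset\mathfrak m_y B$ to shrink --- is a perfectly good alternative. It is a bit more algebraic and yields as a byproduct the useful local statement that $f^{-1}(U_0)$ decomposes as a disjoint union indexed by the fibre (essentially BGR~7.2.1). The paper's argument is shorter and more elementary, using only the definition of admissible coverings and the maximum principle. A minor cosmetic point in your reduction: rather than intersecting $W$ with $\mathrm{Sp}(B)$ (which presupposes separatedness to stay affinoid), it is cleaner to choose $W$ inside $\mathrm{Sp}(B)$ from the start.
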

\begin{proof}
It is enough to consider the case $\Xscr_1 = \Sp A_1$, $\Xscr_2 = \Sp A_2$.
Let $x_1$ be a maximal ideal of $A_1$.
Then $f^{-1}\left( \{f(x_1) \} \right)= \left\{ x_1, \ldots , x_m \right\}$.
We choose generators $t_1, \ldots, t_n$ of $f(x_1)$, and $r_1^{(i)}, \ldots , r_{k_i}^{(i)}$ of $x_i$.
Using the maximum modulus principle, it is easily seen that $\Omega_{j,N} := \left \{ y \in \Xscr_2 \ |\ |t_j(y)| \geq p^{-N} \right\}_{j,N}$ is an admissible covering of the admissible open $\Xscr_2 \setminus \{ f(x)\}$ of $\Xscr_2$.
Let $V_M$ be the admissible open $\left\{ x \in \Xscr_1\ |\ \forall i, \exists k,\ |r_k^{(i)}(x)| \geq p^{-M} \right\}$, which is a finite union of open affinoids, hence quasi-compact.
Consequently, the admissible open sets
\begin{eqnarray*} U_{j,N} & := & V_M \cap f^{-1}\left(\Omega_{j,N}\right) \\
 & = & \left\{ x \in \Xscr_1\ |\ \forall i, \exists k,\ |r_k^{(i)}(x)| \geq p^{-M} \ \mathrm{and}\ |f^{\natural}(t_j)(x)| \geq p^{-N} \right\}_{j,N} \end{eqnarray*}
form an admissible covering of $V_M$.
Therefore there is an $N$ big enough so that
\[ V_M = \bigcup_{j=1}^r U_{j,N} \]
which implies that
\[ f^{-1} \left( \left\{ y \in \Xscr_2\ |\ |t_j(y)| \leq p^{-N-1} \right\} \right) \subset \bigcup_i \left\{ x \in \Xscr_1\ |\ \forall k,\ |r_k^{(i)}(x)| \leq p^{-M} \right\} \]
and when $M$ goes to infinity, the right hand side is the disjoint union of arbitrarily small affinoid neighbourhoods of the $x_i$.
\end{proof}

We define the algebraic points of $\Wscr(E)$ to be the ones of the form
\[ (x_{v,i})_{v,i} \mapsto \prod_{v,\sigma} \sigma \left( \prod_{i=1}^{n} x_{v,i}^{k_{v,\sigma,i}}  \right) \]
where $k_{v,\sigma,i}$ are integers, and such a point is called dominant if $k_{v,\sigma,1} \geq  k_{v,\sigma,2} \geq \ldots \geq k_{v,\sigma,n} \geq 0$.

Recall that a set $S \subset |\Xscr|$ is said to \emph{accumulate} at a point $x \in |\Xscr|$ if $x$ has a basis of affinoid neighbourhoods in which $S$ is Zariski dense.

\begin{prop}
\label{prop:zardensweights}
Let $(\phi_r)_r$ be a finite family of linear forms on $\R^A$ where $A$ is the set of triples $(v,\sigma,i)$ for $v$ a place of $F$ above $p$, $\sigma : F_v \rightarrow E$ and $1 \leq i \leq n$, and let $(c_r)_r$ be a family of elements in $\R_{\geq 0}$.
Assume that the open affine cone $C=\left\{ y \in \R^A\ |\ \forall r,\ \phi_r(y)>c_r \right\}$ is nonempty.
Then the set of algebraic characters in $C$ yields a Zariski dense set in the weight space $\Wscr$, which accumulates at all the algebraic points.
\end{prop}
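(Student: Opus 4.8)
The plan is to turn the assertion into an elementary statement about congruence classes of integral weights, exploiting that $\Wscr$ is, up to a finite factor, an open polydisc of dimension $d=n[F:\Q]$. Write $\Wscr=\mathcal{D}\times\mathcal{T}$ with $\mathcal{D}$ the open polydisc and $\mathcal{T}$ reduced and finite over $E$, i.e.\ a finite set of points. From $T_0\simeq(\prod_{v|p}\Ocal_v^{\times})^n$ and $1+\varpi_v\Ocal_v\simeq(\mu_{p^{\infty}}\cap F_v^{\times})\times\Zp^{[F_v:\Qp]}$ one reads off that the algebraic character attached to a tuple $k=(k_{v,\sigma,i})\in\Z^A$ has $\mathcal{D}$-coordinates depending analytically (hence continuously) on $k$, and $\mathcal{T}$-component depending only on the class of $k$ modulo the exponent of the (finite) torsion subgroup of $T_0$; in particular, if $k\equiv k'$ modulo a suitably large integer $M$, then the two algebraic points have the same $\mathcal{T}$-component and lie in a common residue polydisc of $\mathcal{D}$ whose radius tends to $0$ as $M\to\infty$. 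Since the closed polydiscs $\bar{D}_j$ of radius $p^{-j}$ about an algebraic point $x_0$ (carrying its $\mathcal{T}$-component), $j\gg 0$, form a basis of affinoid neighbourhoods of $x_0$, and since $\bar{D}_j$ is an integral affinoid in which any proper Zariski-closed subset — being the zero locus of a nonzero element of an integral Tate algebra, hence containing no open ball — is nowhere dense for the canonical topology, it will suffice to prove that $\Z^A\cap C$ surjects onto $(\Z/M\Z)^A$ for every integer $M\ge 1$: this produces, for each $x_0$ and each $j$, a family of algebraic characters in $C$ contained in $\bar{D}_j$ and $p$-adically dense there, hence Zariski dense there, which is precisely accumulation of the algebraic characters in $C$ at $x_0$.

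To see that $\Z^A\cap C$ surjects onto $(\Z/M\Z)^A$, fix $M$ and use the shape of $C$. Because each $c_r\ge 0$, every point of the nonempty set $C$ already satisfies $\phi_r>0$ for all $r$, so the genuine open cone $C_0=\{y\in\R^A:\phi_r(y)>0\ \forall r\}$ is nonempty. Pick $y_0\in C_0$ and $\varepsilon>0$ with the closed sup-norm ball $B(y_0,\varepsilon)$ contained in $C_0$, and set $\delta=\min_r\inf_{B(y_0,\varepsilon)}\phi_r>0$. For every $t>0$ one has $B(ty_0,t\varepsilon)=tB(y_0,\varepsilon)\subset C_0$ and $\phi_r\ge t\delta$ on it; choosing $t$ with $t\delta>\max_r c_r$ and $t\varepsilon>M$ yields an axis-parallel box $\prod_{a\in A}(u_a,u_a+t\varepsilon)$ contained in $C$, each of whose edges contains a full residue system modulo $M$. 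Selecting in each edge an integer in a prescribed class modulo $M$ exhibits a point of $\Z^A\cap C$ in any prescribed class of $(\Z/M\Z)^A$, as wanted.

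It remains to deduce Zariski density of the whole set in $\Wscr$, which is now formal: the set of all algebraic points is Zariski dense in $\Wscr$ (the same box argument with no inequalities imposed), and a subset $S$ that accumulates at every point of a Zariski-dense set is itself Zariski dense — otherwise its Zariski closure $Z$ would be a proper closed subset, one could choose an algebraic point $x_0\notin Z$, and any affinoid neighbourhood $V$ of $x_0$ in which $S$ is Zariski dense would satisfy $V\subset Z$ (as $S\cap V\subset Z\cap V$ is Zariski-closed in $V$), contradicting $x_0\notin Z$. I expect the only genuinely delicate point to be the dictionary in the first paragraph between congruences of integral weights and $p$-adic proximity of the corresponding points of $\Wscr$, including the harmless but bookkeeping-heavy finite factor $\mathcal{T}$; the combinatorial heart — that a nonempty set cut out by strict inequalities $\phi_r>c_r$ with $c_r\ge 0$ contains arbitrarily large integral boxes — is elementary convex geometry.
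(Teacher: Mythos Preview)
Your argument is correct and essentially reconstructs what the paper defers to a reference: the paper's entire proof is the one-line citation ``\cite[Lemma 2.7]{Gunit}'' (Chenevier), so there is no in-paper argument to compare against, and your direct approach via surjectivity of $\Z^A\cap C\to(\Z/M\Z)^A$ together with the dictionary between congruences of integral weights and $p$-adic proximity in $\Wscr$ is the standard one. One small imprecision worth tightening: the algebraic characters land in a $\Qp$-rational (or at most $E$-rational) sub-polydisc of $\bar D_j$, so they are not literally $p$-adically dense in $\bar D_j(\Cp)$; the clean fix is to note that they are $p$-adically dense in $\bar D_j(\Qp)$ and that a nonzero element of the Tate algebra over $E$ cannot vanish on all $\Qp$-points (decompose along an $E/\Qp$-basis), which still yields Zariski density in $\bar D_j$.
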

\begin{proof}
\cite[Lemma 2.7]{Gunit}.
\end{proof}
In particular the property of being dominant or ``very regular'' can be expressed in this way.

By finiteness of $\Galg(F) \backslash \Galg(\A_{F,f}) \slash U$ for any open subgroup $U$ of $\Galg(\A_{F,f})$, if $\Pi$ is an automorphic representation of $\Galg(\A_F)$, the representation $\Pi_f$ is defined over $\iota_{\infty}(\bar{\Q})$.
Loeffler defines (\cite[Definition 3.9.1]{Loe}) the classical subspace of the space of $p$-adic automorphic forms above an algebraic and dominant point $w$ of the weight space.
This subspace is isomorphic to $\iota_p \iota_{\infty}^{-1} \left( e \left(\mathcal{C}^{\infty}(\Galg(F) \backslash \Galg(\A_F)) \otimes W^* \right)^{\Galg(F \otimes_{\Q} \R)} \right)$ as $\Hcal^{+}$-module, with $W$ the representation of $\Galg(F \otimes_{\Q} \R)$ which is the restriction of the algebraic representation of $\Galg' \times_{\Q} \C$ having highest weight $\iota_{\infty}^{-1} \iota_p (w)$.
The classical points of the eigenvariety are the ones having eigenvectors in the classical subspace.

We need to give an interpretation of classical points on the eigenvariety $\Xscr$, in terms of automorphic representations of $\Galg(\A_F)$.
Namely, there is a classical point $x \in \Xscr(E')$ defining a character $\Psi_x : \Hcal \rightarrow E'$ (here $E \subset E' \subset \Qpbar$)
if and only if there is an automorphic representation $\Pi = \otimes_v^{\prime} \Pi_v = \Pi_{\infty} \otimes \Pi_p \otimes \Pi_f^{(p)}$ of $\Galg(\A_F)$ such that:
\begin{itemize}
\item $\iota_p \iota_{\infty}^{-1} \left( \otimes_{v | \infty} \Pi_v \right)$ is the algebraic representation having highest weight $w(x)$;
\item $\iota_p\left((e^S \otimes e_S) \Pi_f^{(p)} \right)$ contains a non-zero vector on which $\Hcal^S \otimes \Hcal_S$ acts according to $\Psi_x$;
\item $\iota_p(e_{G_0} \Pi_p)$ contains a non-zero vector on which $\Hcal_p$ acts according to $\mu_{w(x)} \Psi_x$, where $\mu_{w(x)}([G_0 \xi G_0]) = w(x)(\xi)$ if $\xi \in \Sigma^+$.
\end{itemize}  
The twist by the character $\mu_{w(x)}$ is explained by the fact that the classical overconvergent automorphic forms are constructed by induction
of characters of the torus extended from $T_0$ (on which they are defined by $w$) to $T$ trivially on $\Sigma$.

\subsection{Unramified and ``completely refinable'' points}

\subsubsection{Small slope $p$-adic eigenforms are classical}

The algebraic and dominant points of $\Wscr$ are the ones of the form
\[ (x_{v,i})_{v,i} \mapsto \prod_{v,\sigma} \sigma \left( \prod_{i=1}^{n} x_{v,i}^{k_{v,\sigma,i}}  \right) \]
where $k_{v,\sigma,1} \geq k_{v,\sigma,2} \geq \ldots \geq k_{v,\sigma,n} \geq 0$ are integers.
The proof of the criterion given in \cite[Theorem 3.9.6]{Loe} contains a minor error, because it ``sees'' only the restriction of these characters to the maximal split torus $\Salg$ (over $\Qp$), and the BGG resolution has to be applied to \emph{split} semi-simple Lie algebras.

We correct it in the case of quasi-split reductive groups (in particular the restriction to a subfield of a quasi-split group remains quasi-split), and give a stronger criterion.
This criterion could be used on an eigenvariety for which only the weights corresponding to a given $p$-adic place of $F$ vary.
For this purpose we use the ``dual BGG resolution'' given in \cite{Jones}.
The proof will be very close to that of \cite[Propositions 2.6.3-2.6.4]{Loe}.
In the following $\Galg'$ could be any quasi-split reductive group over $\Qp$, and we could replace $E/\Qp$ by any extension splitting $\Galg'$.

Let $\Balg$ be a Borel subgroup of $\Galg'$, $\Salg$ a maximal split torus in $\Balg$, $\Talg$ the centralizer of $\Salg$, a maximal torus.
This determines an opposite Borel subgroup $\bar{\Balg}$ such that $\bar{\Balg} \cap \Balg = \Talg$.
Let $\Phi^+$ (resp. $\Delta$) be the set of positive (resp. simple) roots of $\Galg' \times_{\Qp} E$, with respect to the maximal torus $\Talg$ of the Borel subgroup $\Balg$.
One can split $\Delta = \sqcup_i \Delta_i$ where $\alpha,\beta$ belong to the same $\Delta_i$ if and only if $\alpha|_{\Salg} = \beta|_{\Salg}$ (equivalently, the $\Delta_i$ are the Galois orbits of $\Delta$).
Let $\Sigma$ be a subgroup of $\Talg(\Qp)$ supplementary to its maximal compact subgroup, and $\Sigma^+$ the submonoid consisting of the $z \in \Talg(\Qp)$ such that $|\alpha(z)| \geq 1$ for all $\alpha \in \Delta$.
For each $i$, define $\eta_i$ to be the element of $\Sigma^+ \slash \left( Z(\Galg')(\Qp) \cap \Sigma \right)$ generating $\cap_{j \neq i} \ker |\alpha_j(\cdot)|$ (here $\alpha_j$ denotes any element of $\Delta_j$, and $|\alpha_j(\cdot)|$ does not depend on this choice).

Assume that $G_0$ is a compact open subgroup of $\Galg'(\Qp)$ having an Iwahori factorization $\bar{N}_0 T_0 N_0$.
Using a lattice in the Lie algebra of $N$ and the exponential map, it is easily seen that $N_0$ admits a decreasing, exhaustive filtration by open subgroups $(N_k)_{k \geq 1}$ having a canonical rigid-analytic structure.
Moreover any ordering of $\Phi^+$ endows the Banach space of $\Qp$-analytic functions on $N_k$ taking values in $E$ with an orthonormal basis consisting of monomials on the weight spaces.

Let $\lambda$ be an algebraic and dominant weight of $\Talg \times_{\Qp} E$.
By \cite{Jones}, there is an exact sequence of $E[\mathbb{I}]$-modules, where $\mathbb{I}=G_0 \Sigma^+ G_0 = \bar{B}_0 \Sigma^+ N_0$ is the monoid generated by $G_0$ and $\Sigma^+$:
\begin{equation} \label{equ:BGG} 0 \rightarrow \Ind_{\bar{\Balg}}^{\Galg} (\lambda) \otimes \mbox{sm-Ind}_{\bar{B}_0}^{\bar{B}_0 N_0} 1  \rightarrow \mbox{la-Ind}_{\bar{B}}^{\bar{B} N_0}(\lambda) \rightarrow \bigoplus_{\alpha \in \Delta} \mbox{la-Ind}_{\bar{B}}^{\bar{B} N_0}(s_{\alpha}(\lambda+\rho)-\rho) \end{equation}
where $2 \rho = \sum_{\alpha \in \Phi^+} \alpha$, ``sm'' stands for ``smooth'' and ``la'' for ``locally analytic''. 
The relation with Loeffler's $\Ind(V)_k$ is $\mbox{la-Ind}_{\bar{B}}^{\bar{B} N_0}(\lambda) \otimes \lambda_{\mathrm{sm}}^{-1} = \ilim_k \Ind (E_{\lambda})_k$, where $\lambda_{\mathrm{sm}}$ is the character on $T$ which is trivial on its maximal compact subgroup and agrees with $\lambda$ on $\Sigma$.
Naturally $\Ind_{\bar{\Balg}}^{\Galg} (\lambda) \otimes \mbox{sm-Ind}_{\bar{B}_0}^{\bar{B}_0 N_0} 1 \otimes \lambda_{\mathrm{sm}}^{-1} = \ilim_k \Ind(E_{\lambda})_k^{\mathrm{cl}}$.

To prove a classicity criterion, we need to bound the action of $\eta_i$ on the factors of the RHS of \eqref{equ:BGG} twisted by $\lambda_{\mathrm{sm}}^{-1}$.
Let $n_{\alpha}=\alpha^{\vee}(\lambda) \in \N$ for $\alpha \in \Delta$, then $s_{\alpha}(\lambda+\rho)-\lambda-\rho=-(1+n_{\alpha})\alpha$.
The Banach space of $k$-analytic functions on $N_0$ is the direct sum of the spaces of analytic functions on $xN_k$, $x \in N_0/N_k$, and each of these spaces has an orthonormal (with respect to the supremum norm) basis $(v_{j,x})_{j \in J}$ where $J = \N^{\Phi^+}$ (monomials on the weights spaces).
This basis depends on the choice of a representative $x$, but if we fix $i$ and $x_0 \in N_0$, we can choose $\eta_i^{-1} x_0 \eta_i$ as a representative of its class.
Then if $\phi = \sum_{j} a_j v_{j,\eta_i^{-1} x_0 \eta_i}$ (with $a_j \rightarrow 0$) is an element of $\mbox{la-Ind}_{\bar{B}}^{\bar{B} N_0}(s_{\alpha}(\lambda+\rho)-\rho) \otimes \lambda_{\mathrm{sm}}^{-1}$, and $\xi \in N_k$,
\begin{eqnarray*} (\eta_i \cdot \phi)(x_0 \xi) & = & \eta_i^{-(1+n_{\alpha}) \alpha} \sum_{j \in J} a_j v_{j,\eta_i^{-1} x_0 \eta_i} (\eta_i^{-1} x_0 \xi \eta_i) \\
 & = & \sum_{j \in J} a_j \eta_i^{-(1+n_{\alpha}) \alpha - s(j)} v_{j,x_0} (x_0 \xi)
\end{eqnarray*}
where $s(j) = \sum_{\beta \in \Phi^+} j(\beta) \beta$.
This shows that $|\eta_i \cdot \phi| \leq |\alpha(\eta_i)|^{-(1+n_{\alpha})} |\phi|$, and so the operator $\eta_i$ has norm less than or equal to $|\alpha(\eta_i)|^{-(1+n_{\alpha})}$ on $\mbox{la-Ind}_{\bar{B}}^{\bar{B} N_0}(s_{\alpha}(\lambda+\rho)-\rho) \otimes \lambda_{\mathrm{sm}}^{-1}$.

We can then apply the exact functor which to an $E[\mathbb{I}]$-module $W$ associates the automorphic forms taking values in $W$, and take the invariants under the idempotent $e$ (this functor is left exact).
We obtain that $M(e,E_{\lambda})/M(e,E_{\lambda})_{\mathrm{cl}}$ (the space of $p$-adic automorphic forms modulo the classical automorphic forms) embeds in $\bigoplus_{\alpha \in \Delta} M_{\alpha}$ where each $M_{\alpha}$ is a Banach space on which the operator $[G_0 \eta_i G_0]$ has norm $\leq |\alpha(\eta_i)|^{-(1+n_{\alpha})}$.
The following criterion follows:
\begin{lemm}
\label{lemm:classcrit}
If an overconvergent eigenform $f \in M(e,E_{\lambda})$ satisfies $\left[G_0 \eta_i G_0\right] f = \mu_i f$ with $\mu_i \neq 0$ and
\[ v_p(\mu_i) < \inf_{\alpha \in \Delta_i} -(1+n_{\alpha}) v_p(\alpha (\eta_i)) \]
for all $i$, then $f$ is classical.
\end{lemm}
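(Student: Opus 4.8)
The plan is to extract the classicity criterion directly from the norm bound just established on the right-hand side of the dual BGG resolution \eqref{equ:BGG}. First I would record the setup: applying the exact "automorphic forms valued in $W$'' functor to \eqref{equ:BGG} and passing to $e$-invariants, we have an embedding of the quotient $M(e,E_{\lambda})/M(e,E_{\lambda})_{\mathrm{cl}}$ into $\bigoplus_{\alpha \in \Delta} M_{\alpha}$, where on $M_{\alpha}$ the operator $[G_0 \eta_i G_0]$ acts with operator norm $\leq |\alpha(\eta_i)|^{-(1+n_{\alpha})}$, i.e.\ $v_p([G_0\eta_i G_0]) \geq -(1+n_\alpha) v_p(\alpha(\eta_i))$ on $M_\alpha$ (using $v_p(|x|) = v_p(x)$ and the sign conventions in force). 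Here I should be slightly careful about which $\alpha$ contribute a given $M_\alpha$ and how $\Delta_i$ enters: since $\eta_i$ kills $|\alpha_j(\cdot)|$ for $j \neq i$, the factor $|\alpha(\eta_i)|$ is nontrivial only for $\alpha \in \Delta_i$, so the relevant constraint from the summand indexed by $\alpha$ is governed by whether $\alpha \in \Delta_i$.

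Next I would argue by contraposition. Suppose $f \in M(e,E_\lambda)$ is an overconvergent eigenform with $[G_0\eta_i G_0]f = \mu_i f$, $\mu_i \neq 0$, and that $f$ is \emph{not} classical. Then the image $\bar f$ of $f$ in $M(e,E_\lambda)/M(e,E_\lambda)_{\mathrm{cl}}$ is nonzero, hence has nonzero image in some summand $M_\alpha$, $\alpha \in \Delta$. Let $i$ be the index with $\alpha \in \Delta_i$. Since the projection to $M_\alpha$ is $\mathbb{I}$-equivariant (in particular $[G_0\eta_i G_0]$-equivariant) and the image of $f$ there is a nonzero $[G_0\eta_i G_0]$-eigenvector with eigenvalue $\mu_i$, the operator norm bound on $M_\alpha$ forces $v_p(\mu_i) \geq -(1+n_\alpha)v_p(\alpha(\eta_i)) \geq \inf_{\beta \in \Delta_i} -(1+n_\beta)v_p(\beta(\eta_i))$. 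This contradicts the hypothesis $v_p(\mu_i) < \inf_{\beta \in \Delta_i} -(1+n_\beta)v_p(\beta(\eta_i))$ for that $i$, so $f$ must be classical.

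The one point that needs genuine care — and which I expect to be the main obstacle — is the bookkeeping translating "the $[G_0\eta_i G_0]$-action has norm $\leq |\alpha(\eta_i)|^{-(1+n_\alpha)}$ on the $\alpha$-summand'' into the clean statement about $v_p(\mu_i)$ for a single eigenform, across all the summands simultaneously; concretely, one must check that a nonzero eigenvector in the quotient necessarily survives in \emph{some} $M_\alpha$ (clear, since the map to $\bigoplus_\alpha M_\alpha$ is injective) and that the operator-norm estimate is legitimately inherited by eigenvectors (immediate, since $|\mu_i|\,|v| = |[G_0\eta_iG_0]v| \leq |\alpha(\eta_i)|^{-(1+n_\alpha)}|v|$ for any $v \neq 0$ in a Banach space where the operator has that norm). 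The subtlety is purely that the inequality in the lemma is stated with the infimum over $\Delta_i$, which is exactly what is needed to cover "whichever $\alpha \in \Delta_i$ happens to detect $f$'', so no single $\alpha$ need be pinned down. With this observed, the criterion follows formally; everything quantitative was already done in the computation of $(\eta_i \cdot \phi)(x_0\xi)$ preceding the statement.
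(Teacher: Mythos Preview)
Your proposal is correct and follows exactly the paper's approach: the paper presents the lemma as an immediate consequence of the preceding paragraph (the embedding of $M(e,E_\lambda)/M(e,E_\lambda)_{\mathrm{cl}}$ into $\bigoplus_{\alpha\in\Delta} M_\alpha$ together with the norm bound $\|[G_0\eta_iG_0]\|\le |\alpha(\eta_i)|^{-(1+n_\alpha)}$ on each summand), and you have simply written out the formal contrapositive argument that the paper leaves implicit. The one notational slip ``$v_p(|x|)=v_p(x)$'' should read as the standard translation $|x|=p^{-v_p(x)}$, but your inequalities come out with the correct signs regardless.
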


In the case of the symplectic group $\Galg'$, the family $(\eta_i)_i$ can be indexed by the couples $(v,i)$ where $v$ is a place of $F$ above $p$ and $1 \leq i \leq n$, and $\Delta_{v,i}$ is indexed by the embeddings $F_v \hookrightarrow E$.
Specifically, $\eta_{v,i}$ is trivial at all the places except for $v$, where it equals
\[ \mathrm{Diag}(x_1, \ldots, x_n, x_n^{-1}, \ldots, x_1^{-1}) \]
with $ x_j = \begin{cases} \varpi_v^{-1} & \mbox{ if } j \leq i \\ 1 & \mbox{ if } j>i \end{cases} $.

The conditions in the previous lemma can be written
\[ \begin{cases} v_p(\mu_{v,i}) < \frac{1}{e_v} \inf_{\sigma} (1+k_{v,\sigma,i}-k_{v,\sigma,i+1}) & \mbox{ for } i<n \\ v_p(\mu_{v,n}) < \frac{1}{e_v} \inf_{\sigma} \left( 2+2k_{v,\sigma,n} \right) . \end{cases} \]

\subsubsection{Representations having Iwahori-invariants and unramified principal series}

We recall results of Casselman showing that irreducible representations having Iwahori-invariants appear in unramified principal series, and giving the Atkin-Lehner eigenvalues in terms of the unramified character being induced.

In this subsection, we fix a place $v$ of $F$ above $p$.
Recall $I_v$ has an Iwahori decomposition $I_v=N_{v,0} T_{v,0} \bar{N}_{v,0}$.
As in \cite{Casselman}, if $(\Pi,V)$ is a smooth representation of $\Galg(F_v)$, $V(\bar{N}_v)$ is the subspace of $V$ spanned by the $\Pi(\bar{n})(x)-x$, $\bar{n} \in \bar{N}_v$, $V_{\bar{N}_v}=V/V(\bar{N}_v)$ and if $\bar{N}_{v,i}$ is a compact subgroup of $\bar{N}_v$, $V(\bar{N}_{v,i}) = \left\{ v \in V \ |\ \int_{\bar{N}_{v,i}} \Pi(\bar{n})(v) d \bar{n} = 0 \right\}$.

\begin{lemm}
\label{lemm:casselman}
Let $(\Pi,V)$ be an admissible representation of $\Galg(F_v)$ over $\C$.
Then the natural (vector space) morphism from $V^{I_v}$ to $\left( V_{\bar{N}_v} \right)^{T_{v,0}}$ is an isomorphism, inducing a $\Sigma_v^{+}$-equivariant isomorphism
\[ \Pi^{I_v} \xrightarrow{\sim} \left( \Pi_{\bar{N}_v} \right)^{T_{v,0}} \otimes \delta_{\bar{B}_v}^{-1} \]
where $\delta_{\bar{B}_v}$ denotes the modulus morphism of $\bar{B}_v$, and $u \in \Sigma_v^+$ acts on $\Pi^{I_v}$ by $[I_vuI_v]$.
\end{lemm}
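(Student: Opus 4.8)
The plan is to deduce this from Jacquet's theory of the Jacquet module combined with Frobenius reciprocity, following Casselman's unpublished notes closely. First I would recall that for an admissible representation $(\Pi,V)$ of $\Galg(F_v)$, the Jacquet module $V_{\bar N_v}$ with respect to the opposite parabolic $\bar B_v$ is again admissible as a $T_{v,0}$-representation (indeed as a $\Talg_v(F_v)$-representation), and that the canonical projection $V \to V_{\bar N_v}$ restricts to a map $V^{I_v} \to (V_{\bar N_v})^{T_{v,0}}$ because $I_v = N_{v,0}T_{v,0}\bar N_{v,0}$ has $\bar N_{v,0}$ as a factor. The core of the argument is that this map is both injective and surjective. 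For surjectivity, given an element of $(V_{\bar N_v})^{T_{v,0}}$, lift it arbitrarily to $v_0 \in V$; since $\Pi$ is smooth, $v_0$ is fixed by some small $\bar N_{v,k}$, and one averages over $\bar N_{v,0}/\bar N_{v,k}$ — this requires that the averaging does not change the image in $V_{\bar N_v}$, which is exactly the statement that $V(\bar N_v) = \bigcup_i V(\bar N_{v,i})$ (Casselman). One then further averages over $N_{v,0}$ and $T_{v,0}$, using admissibility to ensure these are finite averages on the relevant finite-dimensional spaces; the composite lies in $V^{I_v}$ and has the prescribed image.

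For injectivity, suppose $x \in V^{I_v}$ maps to zero in $V_{\bar N_v}$, i.e.\ $x \in V(\bar N_v) = \bigcup_i V(\bar N_{v,i})$. So $\int_{\bar N_{v,i}} \Pi(\bar n)(x)\,d\bar n = 0$ for some $i$. But $x$ is already $\bar N_{v,0}$-invariant, so for $i$ large the integral over $\bar N_{v,i}$ is a positive scalar times $x$ itself (after normalizing Haar measure), forcing $x = 0$. This is the step where the Iwahori factorization is used most crucially, and I expect it to be the cleanest part.

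The substantive point — and the main obstacle — is the $\Sigma_v^+$-equivariance together with the precise appearance of the twist by $\delta_{\bar B_v}^{-1}$. Here one must compute how the double coset operator $[I_v u I_v]$ on $V^{I_v}$, for $u \in \Sigma_v^+$, corresponds under the isomorphism to the natural action of $u$ on $(V_{\bar N_v})^{T_{v,0}}$. The key geometric input is that for $u \in \Sigma_v^+$ one has $u N_{v,0} u^{-1} \subseteq N_{v,0}$ and $u^{-1}\bar N_{v,0} u \subseteq \bar N_{v,0}$, so that $I_v u I_v = \bigsqcup N_{v,0} u I_v$ with the number of cosets equal to $[N_{v,0} : u N_{v,0} u^{-1}]$; this index is precisely $\delta_{\bar B_v}(u)^{-1}$ (equivalently $\delta_{B_v}(u)$), which accounts for the modulus factor. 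I would verify this by a direct coset computation: apply $[I_v u I_v]$ to $x \in V^{I_v}$, project to $V_{\bar N_v}$, and observe that all but the ``$\bar N$-part'' of each coset representative dies in the quotient while the $N_{v,0}$-translates collapse (since $\bar N$-coinvariants also kill the relevant $N$-action after projection, by a further averaging argument), leaving $\delta_{\bar B_v}(u)^{-1} \cdot u \cdot (\text{image of } x)$. Matching normalizations of Haar measures so that the combinatorial index turns into exactly $\delta_{\bar B_v}^{-1}$ is the fiddly part; everything else is formal once the Iwahori factorization and admissibility are in hand. I would cite \cite{Casselman} for the details of the measure-theoretic lemmas on $V(\bar N_v)$ and merely indicate the coset bookkeeping here.
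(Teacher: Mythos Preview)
Your injectivity argument has a genuine gap. You claim that if $x\in V^{I_v}\cap V(\bar N_v)$ then $x\in V(\bar N_{v,i})$ for some $i$, and since $x$ is $\bar N_{v,0}$-invariant ``for $i$ large the integral over $\bar N_{v,i}$ is a positive scalar times $x$''. But the exhaustion $V(\bar N_v)=\bigcup_i V(\bar N_{v,i})$ runs over an \emph{increasing} family of compact open subgroups of $\bar N_v$ (the larger the group one averages over, the more vectors are killed), so the compact subgroup $\bar N_{v,i}$ witnessing $x\in V(\bar N_{v,i})$ will in general be \emph{larger} than $\bar N_{v,0}$. Invariance of $x$ under the small subgroup $\bar N_{v,0}$ tells you nothing about $\int_{\bar N_{v,i}}\Pi(\bar n)x\,d\bar n$ when $\bar N_{v,i}\supsetneq\bar N_{v,0}$, so you cannot conclude $x=0$ this way. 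Your surjectivity sketch has a related weak point: after averaging over $\bar N_{v,0}$ and $T_{v,0}$ the image in $V_{\bar N_v}$ is indeed preserved, but the final average over $N_{v,0}$ need not preserve it, since $N_{v,0}$ neither acts on $V_{\bar N_v}$ nor normalizes $\bar N_v$; there is no reason for $e_{N_{v,0}}w-w$ to lie in $V(\bar N_v)$.

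The paper closes both gaps with the one ingredient you leave out: the invertibility of $[I_v u I_v]$ in the Hecke--Iwahori algebra for $u\in\Sigma_v^+$ (a nontrivial fact, cf.\ \cite{IwahoriMatsumoto}). By admissibility $V^{I_v}\cap V(\bar N_v)$ is finite-dimensional, hence contained in some $V(\bar N_{v,1})$; one then picks $u\in\Sigma_v^+$ with $u\bar N_{v,1}u^{-1}\subset\bar N_{v,0}$ and invokes \cite[Prop.~4.1.4]{Casselman}: on vectors in $V^{I_v}\cap V(\bar N_{v,1})$ the operator $[I_v u I_v]$ is visibly zero, yet it acts invertibly on all of $V^{I_v}$, forcing the kernel of $V^{I_v}\to (V_{\bar N_v})^{T_{v,0}}$ to vanish. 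The same circle of ideas in \cite{Casselman} gives surjectivity. Your coset computation for the $\delta_{\bar B_v}^{-1}$ twist is correct and is exactly what the paper's citation of \cite[Lemmas 4.1.1 and 1.5.1]{Casselman} amounts to.
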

\begin{proof}
Let $\bar{N}_{v,1}$ be a compact subgroup of $\bar{N}_v$ such that $V^{I_v} \cap V(\bar{N}_v) \subset V(\bar{N}_{v,1})$.
There is a $u \in \Sigma_v^{+}$ such that $u \bar{N}_{v,1} u^{-1} \subset \bar{N}_{v,0}$.
By \cite[Prop. 4.1.4]{Casselman}, and using the fact that $\left[ I_v u I_v \right]$ is invertible in the Hecke-Iwahori algebra, the natural morphism from $V^{I_v}$ to $V_{\bar{N}}^{T_{v,0}}$ is an isomorphism (of vector spaces).

Lemmas 4.1.1 and 1.5.1 in \cite{Casselman} allow to compute the action of $\Sigma_v^{+}$.
\end{proof}

\begin{coro}
Any smooth irreducible representation of $\Galg(F_v)$ over $\C$ having Iwahori invariants is a subquotient of the parabolic induction (from $\bar{B}_v$) of a character of the torus $T_v$,
which is unique up to the action of $W(T_v,\Galg(F_v))$, and unramified.
\end{coro}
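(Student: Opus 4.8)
The plan is to deduce the corollary directly from Lemma \ref{lemm:casselman} together with standard facts about Jacquet modules of admissible representations. First I would take a smooth irreducible representation $(\Pi,V)$ of $\Galg(F_v)$ with $V^{I_v} \neq 0$; since $I_v$ is open and $\Pi$ is irreducible, $\Pi$ is admissible, so Lemma \ref{lemm:casselman} applies and gives $\left(\Pi_{\bar N_v}\right)^{T_{v,0}} \cong \Pi^{I_v} \otimes \delta_{\bar B_v} \neq 0$. In particular the Jacquet module $\Pi_{\bar N_v}$ (normalized or not — it only matters up to the twist by $\delta_{\bar B_v}^{1/2}$, which is a character of $T_v$) is a nonzero admissible representation of the torus $T_v = \bar B_v / \bar N_v$, hence has an irreducible quotient, i.e.\ a character $\chi$ of $T_v$. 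By the second adjunction (Frobenius reciprocity for the Jacquet functor with respect to the opposite parabolic, see \cite[Theorem 3.2.4]{Casselman}), a nonzero map $\Pi_{\bar N_v} \twoheadrightarrow \chi$ corresponds to a nonzero $\Galg(F_v)$-map $\Pi \to \Ind_{\bar B_v}^{\Galg(F_v)} \chi$ (normalized induction); since $\Pi$ is irreducible this map is injective, so $\Pi$ is a subrepresentation, in particular a subquotient, of the parabolic induction of $\chi$ from $\bar B_v$.

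Next I would check that $\chi$ is unramified. The Iwahori factorization $I_v = N_{v,0} T_{v,0} \bar N_{v,0}$ shows that the image of $V^{I_v}$ in $\Pi_{\bar N_v}$ is fixed by $T_{v,0}$, so every irreducible subquotient of $\Pi_{\bar N_v}$ on which $T_{v,0}$ acts — and in particular the quotient character $\chi$ appearing above, once one arranges it to be a subquotient of $\left(\Pi_{\bar N_v}\right)^{T_{v,0}}$ — is trivial on the maximal compact subgroup $T_{v,0}$ of $T_v$, i.e.\ $\chi$ is unramified. Concretely: $\Pi^{I_v} \cong \left(\Pi_{\bar N_v}\right)^{T_{v,0}}$ is a nonzero space on which $T_{v,0}$ acts trivially, so it contains a $T_v$-eigenvector for some unramified character, and the corresponding induction contains $\Pi$ as above. (The twist by $\delta_{\bar B_v}$ in Lemma \ref{lemm:casselman} is itself unramified, so it does not affect this conclusion.)

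Finally, for uniqueness up to the Weyl group $W(T_v,\Galg(F_v))$: if $\Pi$ is a subquotient of both $\Ind_{\bar B_v}^{\Galg(F_v)} \chi$ and $\Ind_{\bar B_v}^{\Galg(F_v)} \chi'$ with $\chi,\chi'$ unramified characters of $T_v$, then passing to Jacquet modules and using the geometric lemma (the Bruhat filtration of $\left(\Ind_{\bar B_v}^{\Galg(F_v)} \chi\right)_{\bar N_v}$, whose subquotients are the $w\chi$ for $w \in W$, twisted appropriately) shows that $\chi'$ lies in the $W$-orbit of $\chi$. Equivalently, one invokes the standard fact that two unramified principal series of a $p$-adic group share a subquotient if and only if their inducing characters are $W$-conjugate; this is \cite[Theorem 3.3.4]{Casselman} (or can be extracted from the analysis of $\left(\Pi_{\bar N_v}\right)^{T_{v,0}}$ as an unramified Hecke-algebra module via the Satake isomorphism). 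I expect the main subtlety — though it is entirely standard — to be this last uniqueness step, since it requires the Bruhat/geometric-lemma description of the Jacquet module of a principal series rather than just the existence statement of Lemma \ref{lemm:casselman}; everything else is a formal consequence of admissibility, the Iwahori factorization, and second adjunction.
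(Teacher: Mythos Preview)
Your proof is correct and follows essentially the same route as the paper: nonvanishing of $\Pi_{\bar N_v}$ (via Lemma \ref{lemm:casselman}) plus Frobenius reciprocity gives the embedding into a principal series, and the geometric lemma handles uniqueness up to the Weyl group. One small terminological slip: the adjunction you invoke, $\Hom_{T_v}(\Pi_{\bar N_v},\chi)\simeq\Hom_{\Galg(F_v)}(\Pi,\Ind_{\bar B_v}\chi)$, is the ordinary (first) Frobenius reciprocity for the Jacquet functor with respect to $\bar B_v$, not Bernstein's second adjunction; the paper makes the same point after its proof by noting that $(\cdot)_{\bar N_v}$ is left adjoint to non-normalized induction.
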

\begin{proof}
$\Pi$ is a subquotient of the parabolic induction of a character of the torus $T_v$ if and only if $\Pi_{\bar{N}_v} \neq 0$, which is true by the previous lemma.
The geometrical lemma \cite[2.12]{BernZ} shows that if $\chi$ is a smooth character of $T_v$,
\[ \left( \Ind_{\bar{B}_v}^{\Galg(F_v)} \chi \right)_{\bar{N}_v}^{\mathrm{ss}} \simeq \bigoplus_{w \in W(T_v,\Galg(F_v))} \chi^{w} \delta^{1/2}_{\bar{B}_v} \]
\end{proof}
Since $*_{\bar{N}}$ is left adjoint to \emph{non-normalized} induction, the first argument in the proof shows that $\Pi$ is actually
a subrepresentation of $\Ind_{\bar{B}_v}^{\Galg(F_v)}$ for at least one $\chi$ in the orbit under $W(T_v,\Galg(F_v))$.
In that case we will say that $(\Pi,\chi)$ is a \emph{refinement} of $\Pi$.
Note that up to the action of $W(T_v,\Galg(F_v))$, there is a unique $\chi$ such that $\Pi$ is a subquotient of $\Ind_{\bar{B}_v}^{\Galg(F_v)}$.

\subsubsection{Most points of the eigenvariety arise from unramified, completely refinable representations}

We will need a result of Tadi\'c, characterizing the \emph{irreducible} principal series.
If $\chi_1, \ldots, \chi_n$ are characters of $F_v^{\times}$, we denote simply by $\chi = (\chi_1, \ldots, \chi_n)$ the character of $T_v$ which maps
\[ \begin{pmatrix}
x_1 &  &  &  &  &  \\ 
 & \ddots &  &  &  &  \\ 
 &  & x_n &  &  &  \\ 
 &  &  & x_n^{-1} &  &  \\ 
 &  &  &  & \ddots &  \\ 
 &  &  &  &  & x_1^{-1}
\end{pmatrix} \]
to $\prod_{i=1}^n \chi_i(x_i)$.
Let $\nu$ be the unramified character of $F_v^{\times}$ such that $\nu(\varpi_v)=\left| \F_v \right|^{-1}$.

\begin{theo}
\label{theo:irrind}
Let $\chi = (\chi_1, \ldots, \chi_n)$ be a character of $T_v$.
Then $\Ind_{\bar{B}_v}^{\Sp_{2n}(F_v)} \chi$ is irreducible if and only if the following conditions are satisfied
\begin{enumerate}
 \item For all $i$, $\chi_i$ is not of order $2$.
 \item For all $i$, $\chi_i \neq \nu^{\pm 1}$.
 \item For all distinct $i,j$, $\chi_i \chi_j^{-1} \neq \nu^{\pm 1}$ and $\chi_i \chi_j \neq \nu^{\pm 1}$.
\end{enumerate}
\end{theo}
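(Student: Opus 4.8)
The plan is to prove Theorem \ref{theo:irrind} by combining Tadić's classification of irreducible principal series for split classical groups with the Langlands–Shahidi / Bernstein–Zelevinsky reducibility machinery. The statement is a special case of the general reducibility criteria for principal series of $\Sp_{2n}$, so the task is really to identify the precise conditions on the $\chi_i$ that force the induced representation to be reducible. I would organize the proof around the standard dichotomy: a principal series $\Ind_{\bar{B}_v}^{\Sp_{2n}(F_v)} \chi$ is irreducible if and only if it is irreducible ``at each root'', i.e.\ the rank-one subquotients attached to the roots of $\Sp_{2n}$ do not reduce. For the symplectic group the roots come in two flavours: the long roots $\pm 2e_i$, and the short roots $\pm e_i \pm e_j$ ($i \neq j$). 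A long root $2e_i$ contributes the $\SL_2$ (or rather $\Sp_2 = \SL_2$) parabolic, whose reducibility is governed by $\chi_i$ — this reduces exactly when $\chi_i = \nu^{\pm 1}$ or when $\chi_i$ is a nontrivial quadratic character (the latter coming from the pole of the relevant Plancherel/$L$-factor associated with $\Sp_2 \hookrightarrow \Sp_2$, equivalently the second-order character obstruction in the $\mathrm{SO}_3$-parameter); a short root $e_i - e_j$ contributes a $\GL_2$-type induction which reduces exactly when $\chi_i\chi_j^{-1} = \nu^{\pm 1}$, and $e_i + e_j$ reduces when $\chi_i\chi_j = \nu^{\pm 1}$.

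**First I would** reduce to the case of a regular character via the geometric lemma already recalled in the excerpt: the semisimplification of the Jacquet module $\left( \Ind_{\bar{B}_v}^{\Sp_{2n}(F_v)}\chi \right)_{\bar{N}_v}^{\mathrm{ss}} \simeq \bigoplus_{w \in W} \chi^w \delta_{\bar{B}_v}^{1/2}$ shows that if $\chi$ is regular (trivial stabilizer in $W = W(C_n)$) then irreducibility of the principal series is equivalent to the intertwining operators $A(w,\chi)$ being isomorphisms for all $w$, and by the cocycle relation it suffices to check the simple reflections, which are exactly the rank-one computations above. **Then** I would handle the non-regular case, where $\chi$ has a nontrivial stabilizer $W_\chi$ in $W(C_n)$: here one must check that the failure of condition (1), (2) or (3) is precisely what makes $W_\chi$ nontrivial or makes one of the rank-one operators singular. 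The key point is that Tadić's theorem (in \cite{Tadic} / the relevant reference the author will cite) says the principal series is irreducible iff for every root $\alpha$ the pair $(\chi, \alpha)$ is ``non-critical'', and translating the Weyl-group combinatorics of $C_n$ into the list of conditions on $\chi_1,\dots,\chi_n$ is a bookkeeping exercise: condition (1) rules out the long-root quadratic reducibility, condition (2) the long-root $\nu^{\pm1}$ reducibility, and condition (3) the two families of short-root reducibilities $\chi_i\chi_j^{-1} = \nu^{\pm1}$ and $\chi_i\chi_j = \nu^{\pm1}$.

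**The main obstacle** I anticipate is the long-root condition (1): the appearance of order-$2$ characters is the subtle part, since it does not come from an obvious $\nu$-twist relation but from the structure of the $\mathrm{SO}_3(\mathbb{C})$-dual (or equivalently from the fact that $\Ind_{B}^{\Sp_2(F_v)}\chi_i$ is reducible precisely when $\chi_i^2 = \triv$ but $\chi_i \neq \triv$, giving the two constituents $\St_{\chi_i}$-type pieces). One has to invoke the precise rank-one reducibility for $\Sp_2 = \SL_2$ — that $\Ind_B^{\SL_2}\chi$ reduces iff $\chi = \nu^{\pm1}$ or $\chi$ is a nontrivial character with $\chi^2 = 1$ — and make sure the quadratic case is correctly propagated through the induction in stages. **In practice**, for the purposes of this paper, I expect the cleanest route is simply to \emph{cite} Tadić's paper (or Keys, or Sally–Tadić) for $\Sp_{2n}$ directly, since the statement as given is verbatim their criterion; the ``proof'' is then a one-line pointer to the literature, exactly in the style of the other cited lemmas (Casselman, Bernstein–Zelevinsky) in this section. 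If a self-contained argument is wanted, the induction-in-stages reduction to $\GL_1 \times \cdots$ and $\Sp_2$ rank-one factors sketched above, combined with the Bernstein–Zelevinsky geometric lemma to control the regular case and Tadić's length formula for the non-regular case, completes it.
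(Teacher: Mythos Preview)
Your proposal is correct and matches the paper's approach exactly: the paper's proof consists of the single citation \cite[Theorem 7.1]{Tadic}, which you correctly anticipated as the ``cleanest route''. Your sketch of how one would actually prove the result via rank-one reducibility along the roots of type $C_n$ is accurate and more informative than what the paper provides, but for the paper's purposes only the citation is needed.
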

\begin{proof}
\cite[Theorem 7.1]{Tadic}
\end{proof}

\begin{defi}
An irreducible representation $\Pi_v$ of $\Galg(F_v)$ is \emph{completely refinable} if it is isomorphic to $\Ind_{\bar{B}_v}^{\Sp_{2n}(F_v)} \chi$ for some unramified character $\chi$.

An automorphic representation $\Pi$ of $\Galg(\A_F)$ is completely refinable if $\Pi_v$ is completely refinable for any $v | p$.
\end{defi}

Note that completely refinable representations are unramified (for any choice of hyperspecial subgroup).
A representation $\Pi_v$ is completely refinable if and only if $(\Pi_v)_{\bar{N}_v}^{\mathrm{ss}}$ is the sum of $|W(T_v,\Galg(F_v))|$ unramified characters.

Recall that classical points on the eigenvariety are determined by an automorphic representation $\Pi$ together with a refinement of each $\Pi_v$, $v|p$.
Completely refinable automorphic representations are the ones giving the greatest number of points on the eigenvariety.
When one can associate Galois representations to automorphic representations, each refinement of $\Pi$ comes with a ``$p$-adic family'' of Galois representations going through the same one.

\begin{prop}
\label{prop:zardens}
Let $f_1, \ldots, f_r \in \Ocal(\Xscr)^{\times}$.
The set $\mathcal{S}$ of points corresponding to completely refinable, unramified classical points at which
\begin{equation} \min_{v,\sigma} \min \{ k_{v,\sigma,1}-k_{v,\sigma,2}, \ldots, k_{v,\sigma,n-1}-k_{v,\sigma,n}, k_{v,\sigma,n} \} \geq \max \{v_p(f_1),\ldots ,v_p(f_n) \} \label{equ:zardenseineq} \end{equation}
is Zariski dense and accumulates at all the algebraic points.
\end{prop}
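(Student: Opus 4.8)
The plan is to prove the accumulation statement at a fixed algebraic point and deduce Zariski density from it, since the algebraic (indeed classical) points are Zariski dense in $\Xscr$. So fix an algebraic point $x_0\in|\Xscr|$. First I would use the last assertion of Theorem~\ref{theo:existeigen} to choose an arbitrarily small affinoid $\mathscr{V}\ni x_0$ and a connected (hence irreducible, $\Wscr$ being normal) affinoid $\mathscr{U}\subseteq\Wscr$ containing $w(x_0)$, with $w|_{\mathscr{V}}:\mathscr{V}\to\mathscr{U}$ finite and surjective on every irreducible component of $\mathscr{V}$; shrinking if needed, I may also take $\mathscr{U}$ inside the neighbourhood basis of $w(x_0)$ furnished by Proposition~\ref{prop:zardensweights}. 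Since $f_1,\dots,f_r$ and all the operators $\Psi([G_0\eta_{v,i}G_0])^{\pm1}$ lie in $\Ocal(\mathscr{V})$, there is a constant $C$ with $v_p(f_j(x))\le C$ and $|v_p(\Psi([G_0\eta_{v,i}G_0])(x))|\le C$ for every $x\in\mathscr{V}$, every $j$, and every $(v,i)$.

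Next I would introduce the open affine cone $\mathcal{C}\subset\R^A$ of dominant weights $\lambda=(k_{v,\sigma,i})$ satisfying: (i) $\min_{v,\sigma}\min\{k_{v,\sigma,1}-k_{v,\sigma,2},\dots,k_{v,\sigma,n-1}-k_{v,\sigma,n},k_{v,\sigma,n}\}>M$ for a large constant $M\ge C$, which forces \eqref{equ:zardenseineq}; (ii) $\tfrac1{e_v}\inf_\sigma(1+k_{v,\sigma,i}-k_{v,\sigma,i+1})>C$ for $i<n$ and $\tfrac1{e_v}\inf_\sigma(2+2k_{v,\sigma,n})>C$, so that the classicity criterion of Lemma~\ref{lemm:classcrit} applies to any finite‑slope eigenform of weight $\lambda$ over $\mathscr{V}$; and (iii) $M$ large enough in terms of $C$, $n$ and the residue degrees of the $F_v$ for the Tadić argument below. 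This cone is nonempty, so by Proposition~\ref{prop:zardensweights} the set $Z$ of algebraic weights in $\mathcal{C}$ is Zariski dense in $\Wscr$ and accumulates at every algebraic point; in particular $Z\cap\mathscr{U}$ is Zariski dense in $\mathscr{U}$. Now $\mathscr{U}$ is irreducible and $w|_{\mathscr{V}}$ restricted to each irreducible component $\mathscr{V}_j$ of $\mathscr{V}$ is finite and onto $\mathscr{U}$, hence $\mathscr{V}_j$ is equidimensional of dimension $\dim\mathscr{U}$ and the Zariski closure of $w^{-1}(Z)\cap\mathscr{V}_j$, whose image under the closed map $w|_{\mathscr{V}_j}$ is all of $\mathscr{U}$, must be all of $\mathscr{V}_j$; thus $w^{-1}(Z)\cap\mathscr{V}$ is Zariski dense in $\mathscr{V}$. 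By (ii) and Lemma~\ref{lemm:classcrit} every $x\in w^{-1}(Z)\cap\mathscr{V}$ is classical, hence corresponds to an automorphic representation $\Pi$ of $\Galg(\A_F)$ together with a refinement $\chi^{(v)}=(\chi^{(v)}_1,\dots,\chi^{(v)}_n)$ of each $\Pi_v$, $v\mid p$, and by (i) the point $x$ satisfies \eqref{equ:zardenseineq}.

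It remains to see that each such $\Pi$ is completely refinable and unramified above $p$; this is the heart of the matter. Fix $v\mid p$. Using Lemma~\ref{lemm:casselman}, the geometric lemma description of $(\Pi_v)_{\bar N_v}$, and the normalization of the twist $\mu_{w(x)}$, a direct computation gives $v_p(\Psi([G_0\eta_{v,i}G_0])(x))=\sum_{j\le i}v_p(\chi^{(v)}_j(\varpi_v))\pm\tfrac1{e_v}\sum_\sigma\sum_{j\le i}k_{v,\sigma,j}+c_{v,i}$ with $c_{v,i}$ independent of $x$; telescoping and using $|v_p(\Psi([G_0\eta_{v,i}G_0])(x))|\le C$ yields $v_p(\chi^{(v)}_i(\varpi_v))=\mp\tfrac1{e_v}\sum_\sigma k_{v,\sigma,i}+O(C)$ for every $i$, the sign being uniform in $i$. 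For weights in $\mathcal{C}$ with $M$ large, the $n$ valuations $v_p(\chi^{(v)}_i(\varpi_v))$ are therefore all large in absolute value and their pairwise differences (and sums, having a common sign) are large as well; consequently none of $v_p(\chi^{(v)}_i(\varpi_v))$, $v_p(\chi^{(v)}_i(\varpi_v))\pm v_p(\chi^{(v)}_j(\varpi_v))$ for $i\ne j$, can be $0$ or $\pm v_p(|\F_v|)$, so all three conditions of Theorem~\ref{theo:irrind} hold for $\chi^{(v)}$. These conditions and the irreducibility of $\Ind_{\bar B_v}^{\Sp_{2n}(F_v)}\chi^{(v)}$ depend only on the $W(T_v,\Galg(F_v))$-orbit of $\chi^{(v)}$, so it is immaterial which refinement was selected; hence $\Pi_v\simeq\Ind_{\bar B_v}^{\Sp_{2n}(F_v)}\chi^{(v)}$ is an irreducible unramified principal series and $\Pi$ is completely refinable and unramified at $v$. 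Thus $w^{-1}(Z)\cap\mathscr{V}\subseteq\mathcal{S}$, which is therefore Zariski dense in $\mathscr{V}$; letting $\mathscr{V}$ run over a neighbourhood basis of $x_0$ shows $\mathcal{S}$ accumulates at $x_0$, and since $x_0$ was an arbitrary algebraic point and the algebraic points are Zariski dense in $\Xscr$, $\mathcal{S}$ is Zariski dense in $\Xscr$.

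I expect the two delicate steps to be: (a) the bookkeeping in the formula relating $\Psi([G_0\eta_{v,i}G_0])$ to the refinement character $\chi^{(v)}$, the modulus character $\delta_{\bar B_v}$ and the twist $\mu_{w(x)}$ — getting the normalizations exactly right is what makes Tadić's conditions automatic for deep weights; and (b) the rigid-geometric point that $w^{-1}(Z)\cap\mathscr{V}$ is Zariski dense in $\mathscr{V}$, which rests on the finiteness and component-wise surjectivity of $w|_{\mathscr{V}}$ from Theorem~\ref{theo:existeigen} together with equidimensionality of finite surjective morphisms of rigid spaces.
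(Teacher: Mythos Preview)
Your approach is essentially the same as the paper's: restrict to a well-chosen affinoid $\mathscr{V}$ via Theorem~\ref{theo:existeigen}, bound everything by a constant, use Proposition~\ref{prop:zardensweights} to produce a Zariski-dense set of suitable weights in $\mathscr{U}$, pull it back along the finite surjective-on-components map $w|_{\mathscr{V}}$, and invoke the classicity criterion together with Tadi\'c's theorem. The paper is much terser --- it simply asserts that the hypotheses of Lemma~\ref{lemm:classcrit} and Theorem~\ref{theo:irrind} follow from inequalities of the shape \eqref{equ:zardenseineq} and cites \cite[Lemme~6.2.8]{TheseG} for your step~(b) --- so your more explicit verification of the Tadi\'c conditions via the valuations of $\chi^{(v)}_i(\varpi_v)$ is a welcome expansion of what the paper leaves implicit.

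There is, however, one genuine gap in your deduction of global Zariski density. You write ``since $x_0$ was an arbitrary algebraic point and the algebraic points are Zariski dense in $\Xscr$'', but within the paper's logical order this is not established prior to Proposition~\ref{prop:zardens}; indeed this proposition is precisely where such a statement is first proved. The paper closes this loop differently: it invokes \cite[Corollaire~6.4.4]{TheseG} (ultimately the factorization of Fredholm series, \cite[Corollary~4.2.3]{ConradIrr}) to see that each irreducible component $\Xscr'$ of $\Xscr$ maps onto a Zariski-open subset of a connected component of $\Wscr$, hence contains at least one algebraic point; the accumulation property at that point then forces $\overline{\mathcal{S}\cap\Xscr'}$ to contain an open affinoid of $\Xscr'$, whence it equals $\Xscr'$. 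You should insert this step rather than assume density of algebraic points as input.
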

Compare \cite[Proposition 6.4.7]{TheseG}, \cite[Corollary 3.13.3]{Loe}.
\begin{proof}
The hypotheses in the classicality criterion \ref{lemm:classcrit} and the ones in Theorem \ref{theo:irrind} are implied by inequalities of the form \ref{equ:zardenseineq}.
First we prove the accumulation property.
We can restrict to open affinoids $\mathscr{V}$ of the eigenvariety, and hence assume that the right hand side of \ref{equ:zardenseineq} is replaced by a constant.
By Theorem \ref{theo:existeigen}, $\mathscr{V}$ can be an arbitrarily small open affinoid containing an algebraic point $x$ of $\Xscr$, such that there is open affinoid $\mathscr{U}$ of $\Wscr$ such that $\mathscr{V} \subset w^{-1}(\mathscr{U})$, the morphism $w|_{\mathscr{V}} : \mathscr{V} \rightarrow \mathscr{U}$ is finite, and surjective when restricted to any irreducible component of $\mathscr{V}$.
By Proposition \ref{prop:zardensweights}, the algebraic weights satisfying \ref{equ:zardenseineq} are Zariski dense in the weight space $\Wscr$ and accumulate at all the algebraic points of $\Wscr$.
\cite[Lemme 6.2.8]{TheseG} shows that $\mathcal{S} \cap \mathscr{V}$ is Zariski-dense in $\mathscr{V}$.

Each irreducible component $\Xscr'$ of $\Xscr$ is mapped onto a Zariski-open subset of a connected component of $\Wscr$, by \cite[Corollaire 6.4.4]{TheseG} (which is a consequence of the decomposition of a Fredholm series into a product of prime Fredholm series, \cite[Corollary 4.2.3]{ConradIrr}), so $\Xscr'$ contains at least one algebraic point (the algebraic weights intersect all the connected components of $\Wscr$), and hence the Zariski closure of $\mathcal{S} \cap \Xscr'$ contains an open affinoid of $\Xscr'$, which is Zariski dense in $\Xscr'$.
\end{proof}

\section{Galois representations associated with automorphic representations of symplectic groups}

\subsection{A consequence of Arthur's description of the discrete spectrum for classical groups}

\subsubsection{Automorphic self-dual representations of $\GL_{2n+1}$ of orthogonal type}

According to Arthur's conjectural parametrization of discrete automorphic representations, each such representation of $\Galg(\A_F)$ should be part of an A-packet corresponding to a discrete parameter, which is a representation
\[ \mathcal{L}_F \times \mathrm{SL}_2(\C) \rightarrow \mathrm{SO}_{2n+1}(\C) \]
such that (among other conditions) the commutant of the image is finite.

The standard embedding $\mathrm{SO}_{2n+1}(\C) \hookrightarrow \GL_{2n+1}(\C)$ ``transfers'' this parameter to a parameter of $\GL_{2n+1}/F$, which is not discrete in general, and thus it corresponds to an automorphic representation of $\GL_{2n+1}(\A_F)$.
Here we define an automorphic representation $\pi$ of $\GL_{N}(\A_F)$ as a formal sum of discrete automorphic representations $\pi_i$ of $\GL_{n_i}$ such that $\sum_i n_i = N$.
We will write $\pi = \boxplus_i \pi_i$.
By \cite{MoeWal}, each $\pi_i$ is the Langlands quotient of the parabolic induction of twists of a single cuspidal representation by powers of $|\det|$.
We will not need this generality, as we will force the representations $\pi_i$ to be cuspidal in the sequel.

Since $\pi$ comes from a self-dual parameter, it is self-dual: $\pi^{\vee} \simeq \pi$.
Even though $\pi$ is not discrete in general, the discreteness of the parameter which takes values in $\mathrm{SO}_{2n+1}$ implies that the $\pi_i$'s are self-dual.

If $\Pi = \otimes_v \Pi_v$ is an automorphic representation of $\Galg(\A_F)$, then for any archimedean place $v$ of $F$, the local Langlands parmeter of $\Pi_v$ (composed with $\SO_{2n+1}(\C) \hookrightarrow \GL_{2n+1}(\C)$) is of the form:
\[ \mathcal{LL}(\Pi_v) \simeq \epsilon^n \oplus \bigoplus_{i=1}^n \Ind_{W_{\C}}^{W_{\R}} \left( z \mapsto (z/\bar{z})^{r_i} \right) \]
where $\epsilon$ is the only non-trivial character of $W_{\C}/W_{\R}$, and the $r_i$ are integers, with $r_n > r_{n-1} > \ldots > r_1 > 0$.
We define $A_{\Sp_{2n}}$ to be the set of automorphic representations such that for each infinite place $v$ of $F$, $r_1 \geq 2$ and $r_{i+1} \geq r_i + 2$.
The equivalence above is meant as representations of $W_{\R}$, although $\mathcal{LL}(\Pi_v)$ is a parameter taking values in $\SO_{2n+1}(\C)$ (the two notions coincide).

Similarly, let $A_{\GL_{2n+1}}$ be the set of formal sums of self-dual cuspidal representations $\pi = \boxplus_i \pi_i = \otimes_v \pi_v$ of $\GL_{2n+1}(\A_F)$ such that for each infinite place $v$ of $F$,
\[ \mathcal{LL}(\pi_v) \simeq \epsilon^n \oplus \bigoplus_{i=1}^n \Ind_{W_{\C}}^{W_{\R}} \left( z \mapsto (z/\bar{z})^{r_i} \right) \]
where the $r_i$'s are integers,such that $r_1 \geq 2$, $r_{i+1} \geq r_i + 2$, and such that the product of the central characters of the $\pi_i$'s is trivial.

These inequalities are imposed to ensure that the corresponding global parameters are trivial on Arthur's $\SL_2(\C)$, to simplify the statements.
That is why we take formal sums of \emph{cuspidal} (not discrete) representations.

Note that there is no non-zero alternate bilinear form preserved by such a parameter (one could say that the parameter is ``completely orthogonal'').

\begin{assu}
\label{prop:transfersp}
For any $\Pi \in A_{\Sp_{2n}}$, there is a $\pi \in A_{\GL_{2n+1}}$, such that the local Langlands parameters match at the infinite places, and for any finite place $v$ of $F$,
$\pi_v$ is unramified if $\Pi_v$ is unramified, and in that case the local parameters match, by means of the inclusion $\SO_{2n+1}(\C) \subset \GL_{2n+1}(\C)$.
\end{assu}

\subsubsection{$p$-adic Galois representations associated with RLASDC representations of $\GL_N$}

An automorphic cuspidal representation $\pi$ of $\GL_N(\A_F)$ is said to be \emph{L-algebraic} if for any infinite place $v$ of $F$, the restriction of $\mathcal{LL}(\pi_{v})$ to $\C^{\times}$ is of the form
\[ z \mapsto \mathrm{Diag}\left( \left(z^{a_{v,i}}\bar{z}^{b_{v,i}}\right)_i \right) \]
where $a_i,b_i \in \Z$.
By the ``purity lemma'' \cite[Lemme 4.9]{Clozel}, $a_{v,i}+b_{v,i}$ does not depend on $v,i$.
We will say that $\pi$ is L-algebraic \emph{regular} if for any $v$ as above, the $a_{v,i}$ are distinct.
By purity, this implies that if $v$ is real,
\begin{align*}
\mathcal{LL}(\pi_v)|\cdot|^{-s} = \begin{cases}
\epsilon^{e} \oplus_i \Ind^{W_{\R}}_{W_{\C}} \left( z \mapsto (z/\bar{z})^{a'_{v,i}} \right) & \text{ if $N$ is odd, with $e=0,1$} \\
\oplus_i \Ind^{W_{\R}}_{W_{\C}} \left( z \mapsto (z/\bar{z})^{a'_{v,i}} \right) & \text{ if $N$ is even}
\end{cases}
\end{align*}
for some integer $s$, and integers $0<a'_{v,1}<\ldots<a'_{v,\lfloor N/2 \rfloor}$.

As a special case of \cite[Theorem 4.2]{CheHar} (which builds on previous work of Clozel, Harris, Kottwitz, Labesse, Shin, Taylor), we have the following theorem
\begin{theo}
\label{theo:galrepgl}
Let $\pi$ be a regular L-algebraic, self-dual, cuspidal (RLASDC) representation of $\GL_{2n+1}(\A_F)$.
Then $\pi$ is L-arithmetic, and there is a continuous Galois representation
\[ \rho_{\iota_p,\iota_{\infty}}(\pi) : G_F \longrightarrow \GL_{2n+1}(\Qpbar) \]
such that if $v$ is a finite place of $F$ and $\pi_v$ is unramified,
\begin{enumerate}
\item if $v$ is coprime to $p$, then $\rho_{\iota_p,\iota_{\infty}}(\pi)|_{G_{F_v}}$ is unramified, and
\[ \det \left( T \mathrm{Id} - \rho_{\iota_p,\iota_{\infty}}(\pi)(\mathrm{Frob}_v) \right) = \iota_p \iota_{\infty}^{-1} \det \left( T \mathrm{Id} - A \right) \]
where $A \in \GL_N(\C)$ is associated with $\pi_v$ via the Satake isomorphism.
\item if $v$ lies above $p$, $\rho_{\iota_p,\iota_{\infty}}(\pi)|_{G_{F_v}}$ is crystalline.
The associated filtered $\varphi$-module (over $F_{v,0} \otimes_{\Qp} \Qpbar$) is such that
\[\det {}_{\Qpbar} \left( T \mathrm{Id} - \varphi^{f_v} \right) = \iota_p \iota_{\infty}^{-1} \det \left( T \mathrm{Id} - A \right)^{f_v}\]
where $A \in \GL_N(\C)$ is associated with $\pi_v$ via the Satake isomorphism.
For any $\sigma : F_v \rightarrow \Qpbar$, the $\sigma$-Hodge-Tate weights are the $a_{w,i}$, where $w$ is the real place of $F$ defined by $\sigma$, $\iota_p$ and $\iota_{\infty}$.
\end{enumerate}
\end{theo}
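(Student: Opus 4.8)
The plan is to deduce this from \cite[Theorem 4.2]{CheHar} after a harmless normalizing twist. Since $N=2n+1$ is odd, $n=(N-1)/2$ is an integer, and $\pi':=\pi\otimes|\det|^{n}$ is a regular algebraic (in the sense of Clozel), essentially self-dual, cuspidal representation of $\GL_{2n+1}(\A_F)$: one computes $(\pi')^{\vee}\cong\pi'\otimes|\det|^{-2n}$, so the similitude character is an integral power of $|\det|$, and — the dimension being odd — the invariant pairing is necessarily of orthogonal type, which is exactly the situation over a totally real field treated in \cite{CheHar}. The regularity hypothesis there is precisely the condition that the integers $a_{v,i}$ (equivalently $a'_{v,i}$) be pairwise distinct, and Clozel's purity lemma \cite[Lemme 4.9]{Clozel} pins the archimedean $L$-parameter to the displayed shape.

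First I would invoke \cite[Theorem 4.2]{CheHar} for $\pi'$ to obtain a continuous semisimple $\rho':G_F\to\GL_{2n+1}(\Qpbar)$, defined over a finite extension of $\Qp$, unramified outside $p$ and the ramified primes of $\pi'$, and satisfying local–global compatibility at the unramified finite places coprime to $p$: $\det(T\,\mathrm{Id}-\rho'(\mathrm{Frob}_v))=\iota_p\iota_\infty^{-1}\det(T\,\mathrm{Id}-A'_v)$ with $A'_v$ the Satake parameter of $\pi'_v$. Contained in this (by Chebotarev, together with the fact that a continuous representation into $\GL_N$ of a finite $p$-adic extension with traces lying in $\Qbar$ is defined over a number field) is the assertion that the Satake parameters of $\pi'$, hence of $\pi$, are algebraic — this is the $L$-arithmeticity. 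Untwisting, one sets $\rho_{\iota_p,\iota_\infty}(\pi):=\rho'\otimes\mathrm{cyclo}^{\,m}$ for the appropriate integer $m$ matching the twist by $|\det|^{-n}$, and this has the stated unramified local–global compatibility at finite $v\nmid p$.

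Next I would record the behaviour at $v\mid p$ when $\pi_v$ is unramified. Here \cite[Theorem 4.2]{CheHar}, together with the crystalline local–global compatibility at $p$ in the unramified case (for unramified $\pi_v$ this is the statement that $\rho'|_{G_{F_v}}$ is crystalline with crystalline Frobenius matching the Satake parameter of $\pi'_v$), gives after untwisting that $\rho_{\iota_p,\iota_\infty}(\pi)|_{G_{F_v}}$ is crystalline with $\det_{\Qpbar}(T\,\mathrm{Id}-\varphi^{f_v})=\iota_p\iota_\infty^{-1}\det(T\,\mathrm{Id}-A)^{f_v}$ for $A$ the Satake parameter of $\pi_v$. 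For the Hodge–Tate weights, $p$-adic Hodge theory (applied to crystalline representations arising from regular algebraic automorphic representations) reads them off from the archimedean $L$-parameter at the real place $w$ determined by $\sigma$, $\iota_p$ and $\iota_\infty$; tracing through the normalization identifies them with the $a_{w,i}$.

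The only real work is bookkeeping: reconciling the $L$-algebraic versus $C$-algebraic normalizations through the twist by $|\det|^{n}$, fixing the conventions in $p$-adic Hodge theory so that the $\sigma$-Hodge–Tate weights come out as the $a_{w,i}$ rather than their negatives, and matching the arithmetic Frobenius power $\varphi^{f_v}$ with the geometric Frobenius appearing on the automorphic side — all to be done consistently with the conventions fixed at the start of the paper. I would also double-check that the reductions used in \cite{CheHar} (base change to a CM quadratic extension, where $\pi'$ could in principle fail to remain cuspidal) go through here; for self-dual $\pi$ on $\GL_{2n+1}$ this is standard. None of this is conceptually difficult, and I expect the normalization bookkeeping to be the main (and only) obstacle.
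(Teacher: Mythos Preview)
Your approach is exactly the paper's: the theorem is stated there as a special case of \cite[Theorem 4.2]{CheHar}, with no further argument given, and the bookkeeping you describe (the twist by $|\det|^{(N-1)/2}$ between L- and C-algebraic normalizations, the matching of Frobenii and Hodge--Tate weights) is precisely what is implicit in that citation. Your elaboration is correct and more detailed than what the paper itself provides.
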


The power $f_v$ appearing at places above $p$ may seem more natural to the reader (and will actually disappear) after reading subsubsection \ref{subsub:crys}.

Combining this theorem with the transfer detailed in the last section, we obtain
\begin{coro}
\label{coro:galrepsp}
Let $\Pi$ be an automorphic representation of $\Galg(\A_F)$, whose weights $k_{w,1} \geq k_{w,2} \geq \ldots k_{w,n} \geq 0$ at the real places $w$ are far from the walls ($\Pi \in A_{\Sp_{2n}}$ is enough), and unramified at the places above $p$.
There exists a continuous semisimple Galois representation
\[ \rho_{\iota_p,\iota_{\infty}}(\Pi) : G_F \longrightarrow \GL_{2n+1}(\Qpbar) \]
such that for any finite place $v$ of $F$ such that $\Pi_v$ is unramified
\begin{enumerate}
\item if $v$ is coprime to $p$, then $\rho_{\iota_p,\iota_{\infty}}(\Pi)|_{G_{F_v}}$ is unramified, and
\[ \det \left( T \mathrm{Id} - \rho_{\iota_p,\iota_{\infty}}(\Pi)(\mathrm{Frob}_v) \right) = \iota_p \iota_{\infty}^{-1} \det \left( T \mathrm{Id} - A \right) \]
where $A \in \GL_N(\C)$ is associated with $\Pi_v$ via the Satake isomorphism.
\item if $v$ lies above $p$, $\rho_{\iota_p,\iota_{\infty}}(\Pi)|_{G_{F_v}}$ is crystalline.
The associated filtered $\varphi$-module is such that
\[\det {}_{\Qpbar} \left( T \mathrm{Id} - \varphi^{f_v} \right) = \iota_p \iota_{\infty}^{-1} \det \left( T \mathrm{Id} - A \right)^{f_v}\]
where $A \in \SO_{2n+1}(\C) \subset \GL_{2n+1}(\C)$ is associated with $\Pi_v$ via the Satake isomorphism.
For any $\sigma : F_v \rightarrow \Qpbar$, the $\sigma$-Hodge-Tate weights are $k_{w,1}+n > k_{w,2}+n-1> \ldots > k_{w,1}+1 > 0 > - k_{w,1}-1 > \ldots > -k_{w,1}-n$, where $w$ is the real place of $F$ defined by $\sigma$, $\iota_p$ and $\iota_{\infty}$.
\end{enumerate}
\end{coro}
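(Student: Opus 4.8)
The plan is to construct $\rho_{\iota_p,\iota_{\infty}}(\Pi)$ from the cuspidal pieces of the $\GL_{2n+1}$-transfer of $\Pi$ supplied by Assumption~\ref{prop:transfersp}, and to verify the asserted local properties piece by piece. Since $\Pi\in A_{\Sp_{2n}}$, Assumption~\ref{prop:transfersp} gives $\pi=\boxplus_i\pi_i\in A_{\GL_{2n+1}}$ whose local Langlands parameters match those of $\Pi$ (through $\SO_{2n+1}(\C)\subset\GL_{2n+1}(\C)$) at the archimedean places and at the finite places where $\Pi$ is unramified; in particular $\pi_v$, hence every $\pi_{i,v}$, is unramified whenever $\Pi_v$ is. Each $\pi_i$ is self-dual and cuspidal by the definition of $A_{\GL_{2n+1}}$, and it is regular L-algebraic: the parameter $\mathcal{LL}(\pi_v)$ recorded in that definition restricts on $\C^{\times}$ to the trivial character together with the characters $z\mapsto(z/\bar z)^{\pm r_i}$, and these $2n+1$ characters are pairwise distinct because $0<r_1<\cdots<r_n$, so the exponents of every sub-parameter $\mathcal{LL}(\pi_{i,v})$ are distinct integers as well. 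Finally, the ``completely orthogonal'' observation made just before Assumption~\ref{prop:transfersp} shows that each $\pi_i$ (also when its dimension is even) preserves a nondegenerate symmetric bilinear form, so it lies within the scope of the construction of Galois representations.

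I would then apply Theorem~\ref{theo:galrepgl} --- in the form of \cite[Theorem 4.2]{CheHar}, which is valid for regular L-algebraic essentially self-dual cuspidal representations of $\GL_m$ over a totally real field for every $m$, not only for $\GL_{2n+1}$ --- to each $\pi_i$, and set
\[ \rho_{\iota_p,\iota_{\infty}}(\Pi):=\bigoplus_i\rho_{\iota_p,\iota_{\infty}}(\pi_i), \]
which is continuous and semisimple. For a finite place $v\nmid p$ with $\Pi_v$ unramified, each $\pi_{i,v}$ is unramified, so each $\rho_{\iota_p,\iota_{\infty}}(\pi_i)|_{G_{F_v}}$ is unramified; hence $\rho_{\iota_p,\iota_{\infty}}(\Pi)|_{G_{F_v}}$ is unramified and its characteristic polynomial of $\mathrm{Frob}_v$ is the product over $i$ of those of the $\rho_{\iota_p,\iota_{\infty}}(\pi_i)(\mathrm{Frob}_v)$, hence equals $\iota_p\iota_{\infty}^{-1}\det(T\,\mathrm{Id}-A_{\pi})$ with $A_{\pi}$ the Satake parameter of $\pi_v$; by Assumption~\ref{prop:transfersp} this is $\iota_p\iota_{\infty}^{-1}\det(T\,\mathrm{Id}-A)$ for $A\in\SO_{2n+1}(\C)$ the Satake parameter of $\Pi_v$. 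For $v\mid p$ the same bookkeeping with Theorem~\ref{theo:galrepgl}(2) shows that $\rho_{\iota_p,\iota_{\infty}}(\Pi)|_{G_{F_v}}$, being a direct sum of crystalline representations, is crystalline, with the stated characteristic polynomial of $\varphi^{f_v}$.

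It remains to pin down the Hodge--Tate weights. Fix $\sigma:F_v\hookrightarrow\Qpbar$ and let $w$ be the real place it determines via $\iota_p$ and $\iota_{\infty}$. By Assumption~\ref{prop:transfersp}, $\mathcal{LL}(\Pi_w)\simeq\epsilon^n\oplus\bigoplus_{i=1}^n\Ind_{W_{\C}}^{W_{\R}}\!\left(z\mapsto(z/\bar z)^{r_{w,i}}\right)$ with $r_{w,n}>\cdots>r_{w,1}\ge 2$; comparing with the weights $0,\pm e_j$ of the standard representation of $\SO_{2n+1}(\C)$ and using the $\rho$-shift of $\Sp_{2n}$ (type $C_n$, so $\rho=(n,n-1,\dots,1)$) identifies $\{r_{w,1},\dots,r_{w,n}\}$ with $\{k_{w,n}+1,k_{w,n-1}+2,\dots,k_{w,1}+n\}$. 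The displayed shape of $\mathcal{LL}(\pi_w)$ forces $a_{w,i}+b_{w,i}=0$ in Clozel's purity lemma, so Theorem~\ref{theo:galrepgl}(2) gives $\sigma$-Hodge--Tate weights $\{0\}\cup\{\pm r_{w,i}\}_i$, which listed in decreasing order reads $k_{w,1}+n>k_{w,2}+n-1>\cdots>k_{w,n}+1>0>-k_{w,n}-1>\cdots>-k_{w,1}-n$, the set claimed in the corollary.

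The main difficulty here is bookkeeping rather than anything conceptual: keeping straight the $\rho$-shift relating the highest weights $k_{w,\bullet}$ to the archimedean exponents $r_{w,i}$, the dictionary between $\sigma:F_v\hookrightarrow\Qpbar$ and the real place $w$, the placement of the half-integral twist (here trivial, everything being pure of motivic weight $0$), and the harmless power $f_v$ in the crystalline Frobenius. One must also make sure that the existence statement invoked genuinely covers every constituent $\pi_i$ --- in particular the even-dimensional ones, which the discreteness of the $\SO_{2n+1}$-parameter forces to be of orthogonal type --- and not merely the literal $\GL_{2n+1}$ case of Theorem~\ref{theo:galrepgl}.
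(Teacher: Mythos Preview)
Your proof is correct and follows exactly the same approach as the paper's: transfer $\Pi$ to $\pi=\boxplus_i\pi_i$ via Assumption~\ref{prop:transfersp}, apply the Chenevier--Harris construction to each cuspidal factor $\pi_i$, and set $\rho_{\iota_p,\iota_{\infty}}(\Pi)=\bigoplus_i\rho_{\iota_p,\iota_{\infty}}(\pi_i)$. You have supplied more detail than the paper does (the explicit verification of the local properties, the Hodge--Tate weight computation via the $\rho$-shift, and the remark that the even-dimensional $\pi_i$ are of orthogonal type so remain within scope), but the underlying argument is the same.
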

\begin{proof}
There is an automorphic representation $\pi = \boxplus_i \pi_i$ of $\GL_{2n+1}(\A_F)$ corresponding to $\Pi$ by Assumption \ref{prop:transfersp}, obtained by induction from distinct cuspidal representations $\pi_i$.
Let $\rho_{\iota_p,\iota_{\infty}}(\Pi) = \oplus_i \rho_{\iota_p,\iota_{\infty}}(\pi_i)$.
\end{proof}
Note that in that case, since $\Pi_{\infty}$ is C-algebraic, $\Pi$ is obviously C-arithmetic (which is equivalent to L-arithmetic in the case of $\Sp_{2n}$), and thus the coefficients of the polynomials appearing in the corollary lie in a finite extension of $\Q$.

\subsubsection{The Galois pseudocharacter on the eigenvariety}

To study families of representations, it is convenient to use \emph{pseudorepresentations} (or \emph{pseudocharacters}), which are simply the traces of semi-simple representations when the coefficient ring is an algebraically closed field of characteristic zero.
We refer to \cite{Tay2} for the definition, and \cite[Theorem 1]{Tay2} is the ``converse theorem'' we will need.

On $\Ocal(\Xscr)$, we put the topology of uniform convergence on open affinoids.

The Zariski-density of the classical points at which we can define an attached Galois representation implies the following
\begin{prop}
There is a continuous pseudocharacter $T : G_F \rightarrow \Ocal(\Xscr)$, such that at every classical unramified point of the eigenvariety having weight far from the walls, $T$ specializes to the character of the Galois representation associated with the automorphic representation by Corollary \ref{coro:galrepsp}.
\end{prop}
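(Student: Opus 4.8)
The plan is to construct the pseudocharacter by interpolation from the Zariski-dense set of classical points provided by Proposition \ref{prop:zardens}, using the fact that a continuous pseudocharacter is determined by its values on such a set and the target ring $\Ocal(\Xscr)$ is a topologically reduced ring with enough points. Concretely, fix $g \in G_F$. First I would note that by Corollary \ref{coro:galrepsp}, at every classical point $x \in \mathcal{S}$ (the set of completely refinable, unramified points of weight far from the walls appearing in Proposition \ref{prop:zardens}), there is a semisimple Galois representation $\rho_{\iota_p,\iota_{\infty}}(\Pi_x)$ of dimension $2n+1$, and hence a function $x \mapsto \Tr\rho_{\iota_p,\iota_{\infty}}(\Pi_x)(g)$ on $\mathcal{S}$. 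The first task is to show this function is the restriction of a (necessarily unique) element $T(g) \in \Ocal(\Xscr)$.

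For the interpolation step I would work locally on a sufficiently small open affinoid $\mathscr{V} = \Sp A$ of $\Xscr$, chosen as in Theorem \ref{theo:existeigen} so that $w|_{\mathscr{V}}$ is finite onto an affinoid $\mathscr{U}$ of $\Wscr$ and $\mathcal{S} \cap \mathscr{V}$ is Zariski dense in $\mathscr{V}$. The key input is that for $g = \mathrm{Frob}_v$ at a finite place $v \nmid p$ outside the ramification set $S^p \cup S_p$, the trace $\Tr\rho_{\iota_p,\iota_{\infty}}(\Pi_x)(\mathrm{Frob}_v)$ is, by part (1) of Corollary \ref{coro:galrepsp}, given by $\iota_p\iota_{\infty}^{-1}$ applied to (the relevant symmetric function of) the Satake parameter of $\Pi_{x,v}$, which is exactly the eigenvalue of a fixed element of the unramified Hecke algebra $\Hcal^S$ at $x$; since $\Psi(\Hcal^S) \subset \Ocal(\Xscr)$, this already exhibits $T(\mathrm{Frob}_v)$ as a global rigid-analytic function, for all such $v$ simultaneously. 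By Chebotarev density and continuity of $g \mapsto \Tr\rho_{\iota_p,\iota_{\infty}}(\Pi_x)(g)$, these Frobenius traces generate a dense subset, so one gets a candidate continuous function $T : G_F \to \Ocal(\mathscr{V})$; patching over an admissible cover gives $T : G_F \to \Ocal(\Xscr)$. Concretely I would invoke a standard ``pseudocharacter interpolation'' argument (as in Chenevier's \cite{TheseG} or Bellaïche--Chenevier \cite{BC}): the Frobenius traces lie in the closed subring generated by $\Psi(\Hcal^S)$, one checks the Cayley--Hamilton/pseudocharacter identities of dimension $2n+1$ hold on the dense set $\mathcal{S}$, and since $\Ocal(\Xscr)$ is reduced these identities hold identically, so $T$ is a continuous $(2n+1)$-dimensional pseudocharacter.

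Finally, to see that $T$ specializes correctly at \emph{every} classical unramified point of weight far from the walls (not only those in $\mathcal{S}$), I would argue that both $T$ evaluated at $x$ and $\Tr\rho_{\iota_p,\iota_{\infty}}(\Pi_x)$ are continuous pseudocharacters of $G_F$ over $\Qpbar$ agreeing on all $\mathrm{Frob}_v$, $v \notin S^p \cup S_p$, hence agreeing everywhere by Chebotarev and continuity; here one uses that the specialization $\Psi_x$ of the Hecke eigenvalues at $x$ computes the Satake parameters of $\Pi_{x,v}$ via the recipe recorded just before Section 3.2. The main obstacle I expect is the patching/continuity bookkeeping: one must check that the locally defined functions $T(g)$ on the affinoids glue (which follows from uniqueness, since they agree on the dense set $\mathcal{S}$), and that $g \mapsto T(g)$ is continuous for the topology of uniform convergence on open affinoids on $\Ocal(\Xscr)$ — this reduces, via the finiteness of $\mathscr{V} \to \mathscr{U} \times \mathbb{G}_m$ and the fact that $A$ is a Banach algebra, to the continuity of $g \mapsto \Tr\rho_{\iota_p,\iota_{\infty}}(\Pi_x)(g)$ uniformly in $x$, which in turn follows from the fact that the $\rho_{\iota_p,\iota_{\infty}}(\Pi_x)$ are unramified outside a fixed finite set and crystalline (hence of bounded ramification) above $p$.
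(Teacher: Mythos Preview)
Your proposal is correct and takes essentially the same approach as the paper: both invoke the Zariski density from Proposition \ref{prop:zardens} and defer to Chenevier's interpolation result \cite[Proposition 7.1.1]{TheseG} for the actual construction of the pseudocharacter. The only imprecision is in your closing sentence on continuity---the operative input is not crystallinity but rather that all the $\rho_{\iota_p,\iota_\infty}(\Pi_x)$ factor through the compact group $G_{F,S}$ and have traces bounded in absolute value (so they land in a compact subring of the reduced affinoid algebra), which is precisely the hypothesis Chenevier's lemma consumes---but since you already cite \cite{TheseG} and \cite{BC} for this step, the argument stands.
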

\begin{proof}
This is identical to the unitary case, and thus is a consequence of \cite[Proposition 7.1.1]{TheseG}, by Proposition \ref{prop:zardens}.
\end{proof}

Thus at any (classical or not) point of the eigenvariety, there is an attached Galois representation.

\subsection{Galois representations stemming from symplectic forms are generically almost irreducible}

\subsubsection{Crystalline representations over $\Qpbar$}
\label{subsub:crys}

We fix a finite extension $K$ of $\Qp$, and denote $K_0$ the maximal unramified subextension, $e = \left[K : K_0 \right]$, $f = \left[K_0 : \Qp \right]$.
Let $\rho : G_K \rightarrow \GL(V)$ be a continuous representation of the absolute Galois group of $K$, where $V$ is a finite dimensional vector space over $L$, a finite Galois extension of $\Qp$.
We will take $L$ to be big enough so as to be able to assume in many situations that $L=\Qpbar$.
For example, we can assume that $L$ is an extension of $K$, and that $\rho$ has a composition series $0 = V_1 \subset \ldots \subset V_r = V$ such that each quotient $V_{i+1}/V_i$ is absolutely irreducible.

For any such $\rho$, we denote $\Dcris(V)=\left( \Bcris \otimes_{\Qp} V \right)^{G_K}$.
From now on we assume that $\rho$ is a crystalline representation, which means that $\dim_{K_0} \Dcris(V) = \dim_{\Qp} V$.
It is well-known that $\Dcris(V)$ is a filtered $\varphi$-module over $K$, and since $V$ is a vector space over $L$, $\Dcris(V)$ is a $\varphi$-module over $K_0 \otimes_{\Qp} L$, and $\Ddr(V)=K \otimes_{K_0} \Dcris(V)$ is a module over $K \otimes_{\Qp} L$ with a filtration by projective submodules.

We have a natural decomposition $K_0 \otimes_{\Qp} L \simeq \prod_{\sigma_0 \in \Upsilon_0} L_{\sigma_0}$ with $\Upsilon_0 = \Hom_{\Qp\mathrm{-alg.}}(K_0,L)$ and $L_{\sigma_0} \simeq L$, given by the morphisms $\sigma_0 \otimes \mathrm{Id}_L$.
Similarly, $K \otimes_{\Qp} L \simeq \prod_{\sigma \in \Upsilon} L_{\sigma}$ with $\Upsilon = \Hom_{\Qp\mathrm{-alg.}}(K,L)$.

Hence we have decompositions
\[ \Dcris(V) = \prod_{\sigma_0 \in \Upsilon_0} \Dcris(V)_{\sigma_0},\ \ \Ddr(V) = \prod_{\sigma \in \Upsilon} \Ddr(V)_{\sigma} .\]
The operator $\varphi$ restricts as linear isomorphisms from $\Dcris(V)_{\sigma_0}$ to $\Dcris(V)_{\sigma_0 \circ \varphi^{-1}}$, and so $\varphi^f$ is a $L_{\sigma_0}$-linear automorphism on each $\Dcris(V)_{\sigma_0}$, which are isomorphic as vector spaces over $L$ equipped with the linear automorphism $\varphi^f$.

Each $\Ddr(V)_{\sigma}$ comes with a filtration, and hence defines $\dim_L V = N$ Hodge-Tate weights $k_{\sigma,1} \leq \ldots \leq k_{\sigma,N}$ (the jumps of the filtration).

Although we will not use it, it should be noted that by \cite[Proposition 3.1.1.5]{BreMez}, to verify the weak admissibility of a filtered $\varphi$-module $D$ over $K$ with an action of $L$ commuting with $\varphi$ and leaving the filtration stable, it is enough to check the inequality $t_N(D') \geq t_H(D')$ for sub-$K_0 \otimes L$-modules stable under $\varphi$.

If $\varphi^f$ has eigenvalues $\varphi_1, \ldots , \varphi_N$, with $v_p(\varphi_1) \leq \ldots \leq v_p(\varphi_n)$, we can in particular choose $D' = \oplus_{i \leq j} \ker (\varphi^f - \varphi_i)$ (if the eigenvalues are distinct, but even if they are not, we can choose $D'$ such that $\varphi^f|_{D'}$ has eigenvalues $\varphi_1, \ldots, \varphi_j$, counted with multiplicities).
The worst case for the filtration yields the inequalities
\begin{align*}
& v_p(\varphi_1) \geq \frac{1}{e} \sum_{\sigma} k_{\sigma,1} \\
& v_p(\varphi_1 \varphi_2) \geq \frac{1}{e} \sum_{\sigma} k_{\sigma,1} + k_{\sigma,2} \\
& \vdots 
\end{align*}
In the sequel, we will only use these inequalities, and we will not be concerned with the subtleties of the filtrations.

\subsubsection{Variation of the crystalline Frobenius on the eigenvariety}

In this section we explicit the formulas relating the eigenvalues of the crystalline Frobenius at classical, unramified points of the eigenvariety and the eigenvalues of the Hecke-Iwahori operators acting on $p$-adic automorphic forms.
Let $x$ be a classical point on the eigenvariety.
There is an automorphic representation $\Pi$ of $\Galg(\A_F)$ such that $\iota_p \iota_{\infty}^{-1} (\Pi_{\infty})$ is the representation having highest weight $w(x)$.
Assume that $\Pi_p$ is unramified.
The point $x$ defines a refinement of $\Pi_p$, that is an unramified character $\chi_x : T_0 \rightarrow \C^{\times}$ such that $\Pi_p \hookrightarrow \Ind_{\bar{B}}^{\Galg'(\Qp)} \chi_x$, or equivalently the character $ \delta_{\bar{B}}^{1/2} \chi_x$ appearing in $(\Pi_p)_{\bar{N}}$.
By \ref{lemm:casselman}, for any $u \in \Sigma^+$, $\mu_{w(x)} \Psi_x|_{\Hcal_p} = (\iota_p \circ \iota_{\infty}^{-1} \circ \chi_x) \delta_{B}^{1/2}$.

The diagonal torus in $\SO_{2n+1}(\C)$ and the identification of it with the dual of the diagonal torus of $\Sp_{2n}/F_v$ being fixed, the character $\chi_x$ is mapped by the unramified Langlands correspondence for tori to $y=(y_v)_{v|p}$ with $y_v=\mathrm{Diag}(y_{1,v},\ldots,y_{n,v},1,y_{n,v}^{-1},\ldots,y_{1,v}^{-1})$, and $y_{v,i}=\chi_x(\mathrm{Diag}(1,\ldots,\varpi_v,\ldots,1,1,\ldots,\varpi_v^{-1},\ldots,1))$ ($\varpi_v$ being the $i$-th element).
Thus the linearization of the crystalline Frobenius $\varphi^{f_v}$ on $\Dcris(\rho_{\iota_p,\iota_{\infty}}(\pi)|_{G_{F_v}})_{\sigma_0}$ (for any choice of $\sigma_0 : F_v \rightarrow E$ in $\Upsilon_{0,v}$) has eigenvalues
\[ \iota_p \iota_{\infty}^{-1} (y_{v,i}) = q_v^{n+1-i} \phi_{v,n+1-i}(x) \prod_{\sigma \in \Upsilon_v} \sigma(\varpi_v)^{k_{v,\sigma,i}}  \]
and their inverses, together with the eigenvalue $1$.
Here $\phi_{v,n+1-i} \in \Ocal(\Xscr)$ is defined by
\[ \phi_{v,n+1-i} = \frac{\Psi \left([G_0 u_{i-1} G_0] \right)}{\Psi \left([G_0 u_i G_0] \right)} \]
with $u_i = \mathrm{Diag(\varpi_v^{-1},\ldots,\varpi_v^{-1},1,\ldots,1,\varpi_v,\ldots,\varpi_v)}$ (the last $\varpi_v^{-1}$ is the $i$-th element), and $k_{v,\sigma,i}$ the integers defining the weight $w(x)$.

Assume furthermore that $\Pi_p$ admits another refinement $\chi_{x'} = \chi_x^{a}$ for some $a=(a_v)_{v|p}$ in the Weyl group $W(\Galg'(\Qp),\Talg(\Qp))=\prod_v W(\Galg(F_v),T_v)$.
Each $W(\Galg(F_v),T_v)$ can be identified with the group of permutations $a_v : \left\{ -n, \ldots, n \right\} \rightarrow \left\{ -n, \ldots, n \right\}$ such that $a_v(-i)=-a_v(i)$ for all $i$, acting by
\[ a_v(\mathrm{Diag}(x_1,\ldots,x_n,x_n^{-1},\ldots,x_1^{-1})) = \mathrm{Diag}(x_{a_v^{-1}(1)},\ldots,x_{a_v^{-1}(n)},x_{a_v^{-1}(-n)},\ldots,x_{a_v^{-1}(1)}) \]
on $T_v$, where for commodity we set $x_{-i}=x_i^{-1}$ for $i<0$.
Similarly we define $k_{v,\sigma,-i}=-k_{v,\sigma,i}$ and $\phi_{v,-i}=\phi_{v,i}^{-1}$.
We also set $k_{v,\sigma,0}=0$, $\phi_{v,0}=1$.
The equality $\chi_{x'} = \chi_x^{a}$ can also be written
\[ q_v^{(n+1)\mathrm{sign}(w(i))-w(i)} \phi_{v,n+1-w(i)}(x) \prod_{\sigma \in \Upsilon_v} \sigma(\varpi_v)^{k_{v,\sigma,w(i)}} = \\
q_v^{n+1-i} \phi_{v,n+1-i}(x') \prod_{\sigma \in \Upsilon_v} \sigma(\varpi_v)^{k_{v,\sigma,i}} \]
which is valid for any $-n \leq i \leq n$ if we set $\mathrm{sign}(i)=-1$ (resp.\ $0$, $1$) if $i$ is negative (resp.\ zero, positive), and equivalent to
\[ \phi_{v,n+1-i}(x')= \phi_{v,n+1-w(i)}(x) q_v^{i-w(i)+(n+1)(\mathrm{sign}(i)-\mathrm{sign}(w(i)))} \prod_{\sigma \in \Upsilon_v} \sigma(\varpi_v)^{k_{v,\sigma,w(i)}-k_{v,\sigma,i}} .\]
This last formula will be useful in the proof of the main result.

\subsubsection{Main result}

\begin{lemm}
Let $K$ be a finite extension of $\Qp$, and let $\rho : G_K \rightarrow \GL_{N}(\Qpbar)$ be a crystalline representation.
Let $(D,\varphi,\Fil^i D \otimes_{K_0} K )$ be the associated filtered $\varphi$-module.
Let $\kappa_{\sigma,1} \leq \ldots \leq \kappa_{\sigma,N}$ be the Hodge-Tate weights associated with the embedding $\sigma : K \hookrightarrow \Qpbar$.
Let $\varphi_1, \ldots , \varphi_N$ be the eigenvalues of the linear operator $\varphi^{f}$ (on any of the $D_{\sigma_0}$, $\sigma_0 \in \Upsilon_0$), and suppose they are distinct.
Finally, assume that for some $\tau \in \Upsilon$, for all $i$,
\[ \left| v_p(\varphi_i) - \frac{1}{e} \sum_{\sigma \in \Upsilon} \kappa_{\sigma,i} \right| \leq \frac{1}{eN} \min_{1 \leq j \leq N-1} \kappa_{\tau,j+1} - \kappa_{\tau,j}   .\]
Then if $D' \subset D$ is an admissible sub-$\varphi$-module over $K_0 \otimes_{\Qp} \Qpbar$ (corresponding to a subrepresentation), there is a subset $I$ of $\left\{ 1, \ldots , N \right\}$ such that $D'$ has $\varphi^f$-eigenvalues $\left( \varphi_i \right)_{i \in I}$ and $\tau$-Hodge-Tate weights $\left( \kappa_{\sigma,i} \right)_{i \in I}$.
\end{lemm}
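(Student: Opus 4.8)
The plan is to exploit weak admissibility of $D'$ in both directions. Write $D' \subset D$ for an admissible sub-$\varphi$-module over $K_0 \otimes_{\Qp} \Qpbar$, with $\dim D' = m$. Since $\varphi^f$ acts semisimply with distinct eigenvalues $\varphi_1, \dots, \varphi_N$ on each $D_{\sigma_0}$, the $\varphi$-stable submodule $D'$ must be $\bigoplus_{i \in I} (\text{eigenline for }\varphi_i)$ for some $I \subset \{1, \dots, N\}$ with $|I| = m$; so only the statement about Hodge-Tate weights needs proof, i.e.\ that the $\tau$-jumps of the induced filtration on $D'_\tau$ are exactly $(\kappa_{\tau,i})_{i \in I}$. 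Let $(\mu_1 \le \dots \le \mu_m)$ be the $\tau$-Hodge-Tate weights of $D'$ and, more generally, for each $\sigma$ let $(\kappa'_{\sigma,1} \le \dots \le \kappa'_{\sigma,m})$ be the $\sigma$-jumps of $D'$. Interlacing (the jumps of a subspace are squeezed between consecutive jumps of the ambient filtration) gives $\kappa_{\sigma,j} \le \kappa'_{\sigma,j} \le \kappa_{\sigma, j + N - m}$ for all $\sigma$ and $1 \le j \le m$; I will use this repeatedly.

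The core is a two-sided estimate on $t_H$. Weak admissibility of $D$ applied to $D'$, together with the fact that $D'$ is itself admissible hence $t_N(D') = t_H(D')$, gives
\[
\sum_{\sigma \in \Upsilon} \sum_{j=1}^m \kappa'_{\sigma,j} = e \sum_{i \in I} v_p(\varphi_i).
\]
On the other hand, by hypothesis each $v_p(\varphi_i)$ differs from $\frac1e \sum_\sigma \kappa_{\sigma,i}$ by at most $\frac{1}{eN} \delta$, where $\delta := \min_j (\kappa_{\tau,j+1} - \kappa_{\tau,j})$, so
\[
\left| \sum_{i \in I} \left( v_p(\varphi_i) - \frac1e \sum_\sigma \kappa_{\sigma,i} \right) \right| \le \frac{m}{eN}\, \delta \le \frac{1}{e}\, \delta .
\]
Combining, $\left| \sum_\sigma \sum_{j=1}^m \kappa'_{\sigma,j} - \sum_\sigma \sum_{i \in I} \kappa_{\sigma,i} \right| \le \delta$. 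Now enumerate $I = \{ i_1 < \dots < i_m \}$ and compare term-by-term: I claim $\kappa'_{\sigma,j}$ and $\kappa_{\sigma,i_j}$ are forced to agree. Writing $I$ as a subset of $\{1,\dots,N\}$, one has the crude bound, for the $\tau$-weights, $|\mu_j - \kappa_{\tau,i_j}| < \delta$ --- indeed both $\mu_j$ and $\kappa_{\tau,i_j}$ lie in the window $[\kappa_{\tau, i_j - (i_j - j)}, \kappa_{\tau, i_j + (N-m) - (i_j - j)}]$ of consecutive ambient jumps (using interlacing and the fact that exactly $i_j - j$ elements of $I$ lie below position $i_j$), and any two jumps inside a window spanning fewer than $N$ consecutive indices differ by less than... this is exactly where I have to be careful.

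The hard part, and the step I expect to be the main obstacle, is turning the single global inequality $|\sum_\sigma(\text{difference})| \le \delta$ into the pointwise conclusion $\kappa'_{\sigma,j} = \kappa_{\sigma,i_j}$ for \emph{every} $\sigma$ and $j$. The mechanism should be: interlacing forces $\kappa'_{\sigma,j} \ge \kappa_{\sigma,j}$ and, more sharply, a counting argument at each position shows $\kappa'_{\sigma,j} \ge \kappa_{\sigma, i_j}$ is false in general but $\kappa'_{\sigma,j}$ cannot exceed $\kappa_{\sigma, i_j}$ by much; meanwhile summing the complementary inequalities for $D/D'$ (also a quotient of an admissible module, hence with $t_H = t_N$ determined by the complementary eigenvalues $(\varphi_i)_{i \notin I}$) gives the reverse bound. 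The point is that \emph{both} $D'$ and $D/D'$ satisfy $t_H = t_N$, so one gets matching upper and lower bounds on $\sum_\sigma \sum_{j} \kappa'_{\sigma,j}$; since each individual discrepancy is controlled by the gap $\delta$ and the total discrepancy is $\le \delta$, and since the weights are integers with all relevant gaps $\ge \delta$ at the distinguished embedding $\tau$, every discrepancy must vanish --- first at $\tau$, then (because the $\tau$-weights now pin down $I$ exactly, i.e.\ identify which $i_j$ go with which $j$) at all other $\sigma$ by the same interlacing plus the global sum being exact. I would write this out as: (1) reduce to matching Hodge-Tate weights; (2) establish interlacing and the two admissibility identities for $D'$ and $D/D'$; (3) derive $|\sum_\sigma\sum_j (\kappa'_{\sigma,j} - \kappa_{\sigma,i_j})| \le \delta$ with all summands of controlled sign/size; (4) conclude the $\tau$-weights match exactly using the gap hypothesis, which determines $I$; (5) propagate to all $\sigma$ via the exact global sum.
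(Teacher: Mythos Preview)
Your reduction to identifying the $\tau$-Hodge--Tate weights is correct, and so is the observation that admissibility of $D'$ gives the exact equality $\sum_{\sigma}\sum_j \kappa'_{\sigma,j} = e\sum_{i\in I} v_p(\varphi_i)$. But the argument breaks down at the step you yourself flag as the obstacle: the single global constraint $\bigl|\sum_\sigma\sum_j(\kappa'_{\sigma,j}-\kappa_{\sigma,i_j})\bigr|\le\delta$ cannot by itself force the $\tau$-weights to be $(\kappa_{\tau,i})_{i\in I}$. The summands do \emph{not} have a controlled sign, and cancellation is possible. Concretely, take $N=4$, two embeddings $\sigma,\tau$, $I=\{1,4\}$, and suppose the induced $\sigma$-weights of $D'$ are $\kappa_{\sigma,1},\kappa_{\sigma,4}$ while the induced $\tau$-weights are $\kappa_{\tau,2},\kappa_{\tau,3}$. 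If the $\tau$-gaps $\kappa_{\tau,2}-\kappa_{\tau,1}$ and $\kappa_{\tau,4}-\kappa_{\tau,3}$ happen to be equal, the total discrepancy is zero and your inequality is satisfied, yet the conclusion fails. Nothing in your two equalities $t_H=t_N$ for $D'$ and $D/D'$ excludes this configuration; you need more input to rule it out.

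The missing ingredient, and what the paper actually does, is to apply weak admissibility not just to $D'$ but to the whole \emph{nested} chain of sub-$\varphi$-modules $\ker\prod_{x\le r}(\varphi^f-\varphi_{i_x})$ inside $D'$ for $r=1,\dots,|I|$, and likewise inside $D/D'$ (here one must use that the $\varphi_i$ are ordered by valuation). Combining the $r$-th inequality from $D'$ with the appropriate one from $D/D'$ produces, for every $s\le N$, a partial-sum inequality of the form
\[
\sum_{i\le s} v_p(\varphi_i)\ \ge\ \frac{1}{e}\sum_{i\le s}\sum_\sigma \kappa_{\sigma,\theta_\sigma(i)},
\]
where $\theta_\sigma$ records which ambient weights occur in $D'$ and $D/D'$. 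Since $\sum_{i\le s}\kappa_{\sigma,\theta_\sigma(i)}\ge\sum_{i\le s}\kappa_{\sigma,i}$ for every $\sigma$, one can discard the contributions from $\sigma\ne\tau$ with the correct sign, and then a minimal-counterexample argument on $s$ (the least index with $\theta_\tau(s)\ne s$) forces a contradiction with the gap hypothesis. Your approach only captures the case $s=N$, which is an equality and carries no sign information; it is precisely the intermediate values of $s$ that do the work.
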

\begin{proof}
Since the eigenvalues of $\varphi^f$ are distinct, and $D'$ is stable under $\varphi$, there is a subset $I$ of $\left\{ 1, \ldots , N \right\}$ such that $D' = \ker \prod_{i \in I} \left( \varphi^f - \varphi_i \right)$.
There are unique increasing functions $\theta_{1,\sigma} : I \rightarrow \left\{ 1, \ldots , N \right\}$ such that the $\sigma$-weights of $D'$ are the $\kappa_{\sigma,\theta_{1,\sigma}(i)}$, for $i \in I$.
By ordering similarly the weights of $D/D'$, we define increasing functions $\theta_{2,\sigma} : \left\{ 1, \ldots, N \right\} \setminus I \rightarrow \left\{ 1, \ldots, N \right\}$, and we can glue the $\theta_{\cdot,\sigma}$ to get bijective maps $\theta_{\sigma} : \left\{ 1, \ldots , N \right\} \rightarrow \left\{ 1, \ldots, N \right\}$.
We will show that $\theta_{\tau} = \mathrm{Id}$

We now write the admissibility condition for $D'$ and $D/D'$.
Let $i_1$ be the smallest element of $I$.
Then $\ker \left( \varphi^f - \varphi_{i_1} \right)$ is a sub-$\varphi$-module of $D'$.
Its induced $\sigma$-weight is one of the $\kappa_{\sigma,\theta_{\sigma}(i)}$ for $i \in I$, thus it is greater than or equal to $\kappa_{\sigma,\theta_{\sigma}(i_1)}$.
This implies that $v_p(\varphi_{i_1}) \geq 1/e \sum_{\sigma \in \Upsilon} \kappa_{\sigma,\theta_{\sigma}(i_1)}$.
We can proceed similarly for the submodules
\[\ker \left( \left(\varphi^f - \varphi_{i_1} \right) \ldots \left(\varphi^f - \varphi_{i_r} \right) \right)\]
(where the $i_{\cdot}$ are the ordered elements of $I$), to get the inequality
\[ \sum_{1 \leq x \leq r} v_p(\varphi_{i_x}) \geq \frac{1}{e} \sum_{1 \leq x \leq r}  \sum_{\sigma \in \Upsilon} \kappa_{\sigma,\theta_{\sigma}(i_x)} \]
The same applies to $D/D'$, and by adding both inequalities, we finally get
\[ \sum_{1 \leq i \leq s} v_p(\varphi_{i}) \geq \frac{1}{e} \sum_{1 \leq i \leq s} \sum_{\sigma \in \Upsilon} \kappa_{\sigma,\theta_{\sigma}(i)} \]
We now isolate $\tau$, using the fact that $\sum_{1 \leq i \leq s} \kappa_{\sigma,\theta_{\sigma}(i)} \geq \sum_{1 \leq i \leq s} \kappa_{\sigma,i}$ for $\sigma \neq \tau$, and obtain the inequality
\[ \sum_{1 \leq i \leq s} v_p(\varphi_{i}) - \frac{1}{e}\sum_{1 \leq i \leq s} \sum_{\sigma \in \Upsilon} \kappa_{\sigma,i} \geq \frac{1}{e} \sum_{1 \leq i \leq s} \kappa_{\tau,\theta_{\tau}(i)} - \kappa_{\tau,i}  \]
Let $r$ be minimal such that $\theta_{\tau}(s) \neq s$ (if no such $s$ exists, we are done).
In that case, we necessarily have $\theta_{\tau}(s) \geq s+1$, and the previous inequality yields
\[ \sum_{1 \leq i \leq s} v_p(\varphi_{i}) - \frac{1}{e}\sum_{1 \leq i \leq s} \sum_{\sigma \in \Upsilon} \kappa_{\sigma,i} \geq \frac{\kappa_{\tau,s+1} - \kappa_{\tau,s}}{e} \]
but the hypothesis implies that the left hand side is less than $\min_j \left( \kappa_{\tau,j+1} - \kappa_{\tau,j} \right)/e$, and we get a contradiction.
\end{proof}

\begin{theo}
\label{theo:mainressympl}
Let $\Pi$ be an irreducible automorphic representation of $\Galg(\A_F)$ having Iwahori invariants at all the places of $F$ above $p$, and having invariants under an open subgroup $U$ of $\Galg(\A_{F,f}^{(p)})$.
Let $N$ be an integer.
There exists an automorphic representation $\Pi'$ of $\Galg(\A_F)$ such that:
\begin{itemize}
\item $\Pi'$ is unramified at the places above $p$, and has invariants under $U$;
\item The restriction of $\rho_{\iota_p,\iota_{\infty}}(\Pi')$ to the decomposition group at any place above $p$ is either irreducible or the sum of an Artin character and an irreducible representation of dimension $2n$;
\item For all $g$ in $G_F$, $\Tr(\rho_{\iota_p,\iota_{\infty}}(\Pi')(g)) \equiv \Tr(\rho_{\iota_p,\iota_{\infty}}(\Pi)(g)) \mod p^N$.
\end{itemize}
\end{theo}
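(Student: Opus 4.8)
The plan is to work on the eigenvariety $\Xscr$ for $\Galg$ and to use the density result (Proposition \ref{prop:zardens}) together with the classicality criterion (Lemma \ref{lemm:classcrit}), the crystalline Frobenius formulas of the previous subsection, and the last lemma controlling subrepresentations of crystalline representations. First I would fix a refinement of $\Pi_p$ at each place $v \mid p$, producing a classical point $x_0 \in \Xscr$ with $\Psi$-values lying in the ring of integers of some finite extension of $E$; since $T : G_F \to \Ocal(\Xscr)$ is continuous, there is an affinoid neighbourhood $\mathscr{V}$ of $x_0$ on which $\Tr(\rho_{\iota_p,\iota_\infty}(\cdot)(g))$ is constant mod $p^N$ for all $g$, uniformly. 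So it suffices to find, inside $\mathscr{V}$, a single classical point $x'$ corresponding to an unramified, completely refinable $\Pi'$ (with the same tame level $U$) whose Galois representation, restricted to each decomposition group at $p$, is either irreducible or the sum of an Artin character and an irreducible $2n$-dimensional piece.

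The heart of the argument is to choose $x'$ so that, at each $v \mid p$, the eigenvalues $\varphi_1,\dots,\varphi_{2n+1}$ of the crystalline Frobenius $\varphi^{f_v}$ are \emph{generic} relative to the Hodge--Tate weights, in the precise sense demanded by the last lemma: for a chosen embedding $\tau$, the quantities $\left|v_p(\varphi_i) - \tfrac1e\sum_\sigma \kappa_{\sigma,i}\right|$ are bounded by $\tfrac{1}{eN}\min_j(\kappa_{\tau,j+1}-\kappa_{\tau,j})$. Via the explicit formula
\[
\iota_p\iota_\infty^{-1}(y_{v,i}) = q_v^{\,n+1-i}\,\phi_{v,n+1-i}(x')\prod_{\sigma\in\Upsilon_v}\sigma(\varpi_v)^{k_{v,\sigma,i}},
\]
the slopes $v_p(\varphi_i)$ decompose into a ``weight part'' $\tfrac1e\sum_\sigma k_{v,\sigma,i}$ plus a bounded ``refinement part'' coming from $q_v^{n+1-i}$ and $v_p(\phi_{v,n+1-i}(x'))$. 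The $\phi_{v,j}$ are among the fixed $f_r \in \Ocal(\Xscr)^\times$ in Proposition \ref{prop:zardens}, so along the Zariski-dense set $\mathcal{S}$ of completely refinable points in $\mathscr{V}$ we may force the weight gaps $k_{v,\sigma,i}-k_{v,\sigma,i+1}$ (and $k_{v,\sigma,n}$) as large as we like while the valuations of the $\phi_{v,j}$ stay bounded by the fixed constants; this makes the lemma's hypothesis hold and simultaneously forces the $\varphi_i$ to be distinct. By the lemma, any subrepresentation of $\rho_{\iota_p,\iota_\infty}(\Pi')|_{G_{F_v}}$ has, for each $i$ in its index set $I$, the paired data $(\varphi_i,(\kappa_{\sigma,i})_\sigma)$; since the $2n+1$ Hodge--Tate weights are $\pm(k_{w,j}+n+1-j)$ together with $0$, and the slopes are pinned near $\tfrac1e\sum_\sigma$ of these, the only $\varphi$-stable, filtration-compatible (hence admissible) subspaces are spanned by subsets $I$ symmetric under $i\mapsto -i$; and using that the nonzero Hodge--Tate weights are distinct and that the refinement can be moved by the Weyl-group action (the last displayed formula relating $\phi_{v,n+1-i}(x')$ and $\phi_{v,n+1-w(i)}(x)$), one can kill all proper admissible subspaces except possibly the line carrying the weight $0$. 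That line, if it splits off, gives an Artin character (its Frobenius eigenvalue being a $p$-adic unit of the right shape), and the complementary $2n$-dimensional piece is then irreducible.

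Concretely the steps are: (1) reduce to finding one good classical point in a fixed affinoid neighbourhood, using continuity of $T$ and $T(c_v)$ locally constant; (2) apply Proposition \ref{prop:zardens} to the functions $\phi_{v,j}$ to get a Zariski-dense supply of completely refinable points with weights as regular as desired and bounded refinement valuations; (3) translate the crystalline-Frobenius formulas into slope estimates and verify the numerical hypothesis of the last lemma at each $v\mid p$; (4) run the lemma to show every admissible sub-$\varphi$-module is cut out by a sign-symmetric index set, and exploit the Weyl-group freedom in the choice of refinement to eliminate all proper such submodules except the $0$-weight line; (5) conclude that $\rho_{\iota_p,\iota_\infty}(\Pi')|_{G_{F_v}}$ is irreducible, or splits as (Artin character) $\oplus$ (irreducible of dimension $2n$), and note the congruence mod $p^N$ from step (1). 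I expect the main obstacle to be step (4): ensuring that the \emph{only} surviving admissible submodule is the $0$-weight line, which requires carefully combining the slope/weight pairing from the lemma with a suitable choice of refinement $x'$ (equivalently a Weyl element $w$) so that no sign-symmetric proper subset $I$ can simultaneously satisfy the admissibility inequalities for both $D'$ and $D/D'$; this is where the genericity of the weights and the ``change of refinement'' formula must be used together most delicately.
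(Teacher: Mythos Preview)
Your outline correctly isolates the ingredients (density on $\Xscr$, the crystalline lemma, the Weyl--group change-of-refinement formula), but the single-deformation architecture of steps~(2)--(4) cannot work, and the assertion in step~(4) that admissible sub-$\varphi$-modules are indexed by \emph{sign-symmetric} subsets $I\subset\{-n,\dots,n\}$ is not justified. The lemma only says that a subrepresentation has matched data $(\varphi_i,(\kappa_{\sigma,i})_\sigma)_{i\in I}$ for \emph{some} $I$; the further restriction on $I$ must come from the admissibility inequalities for $D'$ and $D/D'$, which unwind to partial-sum constraints on the numbers $v_p(\phi_{v,i}(x'))$. Here is the obstruction: if you make a single deformation from $x_0$ to a nearby $x'$, the $\phi_{v,i}\in\Ocal(\Xscr)^\times$ are continuous and invertible, so for $x'$ close enough one has $v_p(\phi_{v,i}(x'))=v_p(\phi_{v,i}(x_0))$, a collection of fixed numbers over which you have no control. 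In the ordinary case (all $v_p(\phi_{v,i}(x_0))=0$) every partial sum vanishes and \emph{every} subset $I$ passes the admissibility test, so nothing is excluded. Choosing instead a different refinement of the completely refinable $\Pi'$ does not help: all refinements of $\Pi'$ give the same Galois representation, hence the same intrinsic subrepresentations; the change-of-refinement formula merely rewrites the same inequalities in different coordinates.

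What the paper does is an \emph{iterated} process: three deformations interleaved with two refinement changes. One first deforms $x\rightsquigarrow x_1$ with weights large relative to the current $\phi$-valuations; then passes to the point $x_1'$ corresponding to the refinement of $\Pi_1$ by the longest Weyl element. By the change-of-refinement formula the valuations $v_p(\phi_{v,i}(x_1'))$ now involve the (large, chosen) weights $k^{(1)}_{v,\sigma,j}$. A second deformation $x_1'\rightsquigarrow x_2$ (again with suitably large weights) and a second refinement change (by a specific $n$-cycle in the Weyl group) produce $x_2'$ whose $\phi$-valuations encode \emph{both} previous weight choices; a final deformation $x_2'\rightsquigarrow x_3$ makes the lemma applicable. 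At $x_3$ the numbers $v_p(\phi_{v,i}(x_3))$ are no longer uncontrollable constants: they have been engineered so that the admissibility inequalities force first $\{-n,\dots,-1\}\subset I$ or $\subset J$, then $\{1,\dots,n\}\subset I$ or $\subset J$, and finally rule out the ``mixed'' configuration, leaving only $I=\{0\}$ or $J=\{0\}$. The congruence $\Pi\equiv\Pi_1\equiv\Pi_2\equiv\Pi_3\pmod{p^N}$ is preserved because the refinement changes do not alter the underlying Galois representation and each deformation stays in the open locus where $T$ is constant $\bmod\,p^N$. Your sketch is missing precisely this mechanism of loading the $\phi$-valuations with weight information from earlier steps; without it, step~(4) cannot be carried out.
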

\begin{proof}
We will write $\Pi' \equiv \Pi \mod p^N$ for the last property.

Recall that for $v$ a place of $F$ above $p$, there are elements $\phi_{v,1}, \ldots , \phi_{v,n} \in \Ocal(\Xscr)^{\times}$ such that for any unramified classical point $x \in \Xscr(\Qpbar)$ refining an automorphic representation $\Pi$, the filtered $\varphi$-module associated with the crystalline representation $\rho_{\iota_p,\iota_{\infty}}(\Pi)|_{G_{F_v}}$ has $\varphi^{f_v}$-eigenvalues
\begin{align*}
\left( \phi_{v,-n}(x) q_v^{-n} \prod_{\sigma} \sigma(\varpi_v)^{k_{v,\sigma,-1}}, \ldots,  \phi_{v,-1}(x) q_v^{-1} \prod_{\sigma} \sigma(\varpi_v)^{k_{v,\sigma,-n}}, 1, \right. \\ \left. \phi_{v,1}(x) q_v \prod_{\sigma} \sigma(\varpi_v)^{k_{v,\sigma,n}}, \ldots, \phi_{v,n}(x)q_v^n \prod_{\sigma} \sigma(\varpi_v)^{k_{v,\sigma,1}} \right)
\end{align*}
and $\sigma$-Hodge-Tate weights
\[ k_{v,\sigma,-1}-n, \ldots , k_{v,\sigma,-n}-1, 0, k_{v,\sigma,n}+1, \ldots, k_{v,\sigma,1}+n \]
In the following if $x_b$ or $x_b'$ is a classical point, $k_{v,\sigma,i}^{(b)}$ will be the weights defining $w(x_b)$.
The representation $\Pi$ corresponds to at least one point $x$ of the eigenvariety $\Xscr$ for $\Galg'$ and the idempotent $e_U \otimes e_{G_0}$.
By Proposition \ref{prop:zardens}, and since $G_F$ is compact, there exists a point $x_1 \in \Xscr(E')$ (near $x$, and for some finite extension $E'$ of $E$) corresponding to an unramified, completely refinable automorphic representation $\Pi_1$ and a refinement $\chi$, such that for any $v$,
\[ \frac{2}{e_v}\sum_{i=1}^n \sum_{\sigma} k^{(1)}_{v,\sigma,i} > -v_p\left( \phi_{v,1}(x_1) \ldots \phi_{v,n}(x_1) \right) + 3n(n+1)f_v \]
and $\Pi_1 \equiv \Pi \mod p^N$.
Since  $\Pi_1$ is completely refinable, there is a point $x_1' \in \Xscr(E')$ associated with the representation $\Pi_1$ and the character $\chi^a$, where $a$ is the element of the Weyl group acting as $-\mathrm{Id}$ on the roots.
Specifically, $\Psi_{x_1} |_{\Hcal^S \otimes \Hcal_S \otimes e_{G_0}} = \Psi_{x_1'} |_{\Hcal^S \otimes \Hcal_S \otimes e_{G_0}}$, but
\[\phi_{v,n+1-i}(x_1')=\phi_{v,-n-1+i}(x_1) q_v^{2i+(2n+2)} \prod_{\sigma} \sigma(\varpi_v)^{-2k^{(1)}_{v,\sigma,i}} \]
for $i=1,\ldots,n$, and all places $v$.
There exists a point $x_2 \in \Xscr(E')$ (near $x_1'$, and up to enlarging $E'$) corresponding to an unramified, completely refinable automorphic representation $\Pi_2$ and a refinement, such that for any $v$ and any $j<0$,
\[\frac{1}{e_v} \sum_{\sigma}k^{(2)}_{v,\sigma,n+j}-k^{(2)}_{v,\sigma,n+j+1}>-v_p(\phi_{v,-j+1}(x_2))-f_v\]
and $\Pi_2 \equiv \Pi_1 \equiv \Pi \mod p^N$.
Like before, since $\Pi_2$ is completely refinable, there is a point $x_2' \in \Xscr(E')$ such that $\Psi_{x_2} |_{\Hcal^S \otimes \Hcal_S \otimes e_{G_0}} = \Psi_{x_2'} |_{\Hcal^S \otimes \Hcal_S \otimes e_{G_0}}$, and
\begin{align*}
& \phi_{v,n}(x_2')=\phi_{v,1}(x_2) q_v^{1-n} \prod_{\sigma} \sigma(\varpi_v)^{k^{(2)}_{v,\sigma,n}-k^{(2)}_{v,\sigma,1}}  \\
& \phi_{v,i}(x_2')=\phi_{v,i+1}(x_2) q_v \prod_{\sigma} \sigma(\varpi_v)^{k^{(2)}_{v,\sigma,n-i}-k^{(2)}_{v,\sigma,n-i+1}}  \text{ for $i=1,\ldots,n-1$.}
\end{align*}
Here we used the element of the Weyl group corresponding (at each $v$) to the permutation
\[
\begin{pmatrix}
-n & -n+1 & \ldots & -2 & -1 & 1 & \ldots & n \\
-n+1 & -n+2 & \ldots & -1 & -n & n & \ldots & n-1
\end{pmatrix} .
\]
Again, we can choose a point $x_3 \in \Xscr(E')$ (near $x_1'$, and up to enlarging $E'$) corresponding to an unramified automorphic representation $\Pi_3$ and a refinement, such that for any $v$ and any $\tau \in \Upsilon$,
\[\begin{split} \frac{1}{e_v (2n+1)} \min \left\{k^{(3)}_{v,\tau,1}-k^{(3)}_{v,\tau,2}, \ldots , k^{(3)}_{v,\tau,n-1}-k^{(3)}_{v,\tau,n}, k^{(3)}_{v,\tau,n} \right\} > \\
\max\left\{ 0, |v_p(\phi_{v,\tau,1}(x_3))|, \ldots, |v_p(\phi_{v,\tau,n}(x_3))| \right\} \end{split}\]
and $\Pi_3 \equiv \Pi \mod p^N$.
We show that $\Pi_3$ has the desired properties.
First we apply the previous lemma to the local Galois representations associated with $\Pi_3$, at the places above $p$, which are crystalline.
Since the differences $v_p(\varphi_i) - \frac{1}{e} \sum_{\sigma \in \Upsilon} \kappa_{\sigma,i}$ in the hypotheses of the lemma are equal in our case to
\[ -v_p(\phi_{v,n}(x_3)), \ldots, -v_p(\phi_{v,1}(x_3)), 0, v_p(\phi_{v,1}(x_3)), \ldots , v_p(\phi_{v,n}(x_3)), \] the hypotheses of the lemma are satisfied for all $\tau \in \Upsilon$.
Thus if $\rho_{\iota_p,\iota_{\infty}}(\pi_3)|_{G_{F_v}}$ is not irreducible, there is a subset $\emptyset \subsetneq I \subsetneq \left\{ -n , \ldots, n \right\}$ such that if $i_1 < \ldots < i_r$ are the elements of $I$ and $j_1 < \ldots < j_{2n+1-r}$ those of $J = \left\{ -n , \ldots, n \right\} \setminus I$,
\begin{align*}
& v_p(\phi_{v,i_1}(x_3)) \geq 0 \\
& v_p(\phi_{v,i_1}(x_3)) + v_p(\phi_{v,i_2}(x_3)) \geq 0 \\
& \vdots \\
& v_p(\phi_{v,i_1}(x_3)) + \ldots + v_p(\phi_{v,i_r}(x_3)) = 0 \\
& v_p(\phi_{v,j_1}(x_3)) \geq 0 \\
& v_p(\phi_{v,j_1}(x_3)) + v_p(\phi_{v,j_2}(x_3)) \geq 0 \\
& \vdots \\
& v_p(\phi_{v,j_1}(x_3)) + \ldots + v_p(\phi_{v,j_{2n+1-r}}(x_3)) = 0
\end{align*}
by the admissibility of the corresponding filtered $\varphi$-modules.
For all $i$, $v_p(\phi_{v,i}(x_2'))=v_p(\phi_{v,i}(x_3))$, so all these conditions hold also at $x_2'$.
Up to exchanging $I$ and $J$, we can assume that $i_1 = -n$.
If $j_1<0$,
\[ v_p(\phi_{v,j_1}(x_2')) = - v_p(\phi_{v,-j_1}(x_2')) = -v_p(\phi_{v,-j_1+1}(x_2))-f_v - \frac{1}{e_v} \sum_{\sigma} k^{(2)}_{v,\sigma,n+j_1}-k^{(2)}_{v,\sigma,n+j_1+1} \]
and $x_2$ was chosen to ensure that this quantity is negative, so we are facing a contradiction.
Thus $J$ has only nonnegative elements, and $\left\{ -n, \ldots, -1 \right\} \subset I$.
If we do not assume that $i_1=-n$, we have in general that $\left\{ -n, \ldots, -1 \right\}$ is contained in $I$ or $J$.
Similarly, suppose $i_r=n$.
If $j_{2n+1-r}>0$,
\begin{align*}
v_p(\phi_{v,j_{2n+1-r}}(x_2')) &= v_p(\phi_{v,j_{2n+1-r}}(x_2')) \\
&= v_p(\phi_{v,j_{2n+1-r}}(x_2))+f_v + \frac{1}{e_v} \sum_{\sigma} k^{(2)}_{v,\sigma,n-j_{2n+1-r}}-k^{(2)}_{v,\sigma,n-j_{2n+1-r}+1}
\end{align*}
is positive, another contradiction.
Therefore $\left\{ 1, \ldots, n \right\}$ is contained in $I$ or $J$.

Assume for example that $\left\{-n, \ldots, -1 \right\} \subset I$ and $\left\{ 1, \ldots, n \right\} \subset J$.
In that case
\begin{eqnarray*}
v_p(\phi_{v,j_1}(x_3) \ldots \phi_{v,j_{2n+1-r}}(x_3)) &=& v_p(\phi_{v,1}(x_2)\ldots \phi_{v,n}(x_2)) \\
&= & v_p(\phi_{v,1}(x_1')\ldots \phi_{v,n}(x_1')) \\
&=  &-v_p(\phi_{v,1}(x_1)\ldots \phi_{v,n}(x_1))+3n(n+1)f_v \\
& & -\frac{2}{e_v}\sum_{i=1}^n \sum_{\sigma} k^{(1)}_{v,\sigma,i}
\end{eqnarray*}
is negative, which is yet another contradiction.

As a consequence, we can conclude that $I$ or $J$ is equal to $\{0\}$, and this shows that at each place $v$ of $F$ above $p$, the semisimplification of $\rho_{\iota_p,\iota_{\infty}}(\Pi_3)|_{G_{F_v}}$ is either irreducible or the sum of an Artin character and an irreducible representation of dimension $2n$.
Consequently $\Pi_3$ has the required properties.
\end{proof}

\section{Similar results for even orthogonal groups}
\label{section:orth}

In this section we explain (very) briefly how the same method as in the previous sections applies to orthogonal groups.

Let $F$ be a totally real number field of even degree over $\Q$.
Then $F$ has an even number of $2$-adic places of odd degree over $\Q_2$, and since these are the only finite places of $F$ at which $(-1,-1)_v=-1$ (where $(\cdot,\cdot)_v$ denotes the Hilbert symbol), $\prod_{v} (-1,-1)_v = 1$ where the product ranges over the finite places of $F$.
Consequently, there is a unique quadratic form on $F^4$ which is positive definite at the real places of $F$, and split (isomorphic to $(x,y,z,t) \mapsto xy+zt$) at the finite places.
It has Hasse invariant $(-1,-1)_v$ at each finite place $v$ of $F$, and its discriminant is $1$.
As a consequence, for any integer $n \geq 1$, there is a connected reductive group $\Galg$ over $F$ which is compact (and connected) at the real places (isomorphic to $\SO_{4n}/\R$) and split at all the finite places (isomorphic to the split $\SO_{4n}$).
As before, we let $\Galg' = \Res^{F}_{\Q} \Galg$.
The proofs of the existence and properties of the attached eigenvariety $\Xscr \rightarrow \Wscr$ are identical to the symplectic case.
We could not find a result as precise as Theorem \ref{theo:irrind} in the literature, however by \cite[Proposition 3.5]{Cassel2} unramified principal series are irreducible on an explicit Zariski-open subset of the unramified characters.
Specifically, if $\SO_{4n}(F_v) = \left\{ M \in \mathrm{M}_{4n}(F_v)\ |\ ^tMJ_{4n}M=J_{4n} \right\}$, 
\[T = \left\{ \begin{pmatrix} 
x_1 &  &  &  &  &  \\ 
 & \ddots &  &  &  &  \\ 
 &  & x_n &  &  &  \\ 
 &  &  & x_n^{-1} &  &  \\ 
 &  &  &  & \ddots &  \\ 
 &  &  &  &  & x_1^{-1}
\end{pmatrix} \ \middle|\  x_i \in F_v^{\times} \right\}\] and $P$ is  any parabolic subgroup containing $T$, then for an unramified character $\chi=(\chi_1,\ldots,\chi_n)$ of $T$ ($\chi_i$ is a character of the variable $x_i$), $\Ind_P^{\SO_{4n}(F_v)} \chi$ is irreducible if $\chi_i(\varpi_v)^2 \neq 1$ for all $i$ and $\chi_i(\varpi_v)\chi_j(\varpi_v)^{\pm 1} \neq 1,q_v,q_v^{-1}$ for all $i<j$.
Note that this is not an equivalence.

The existence of Galois representations $\rho_{\iota_p,\iota_{\infty}}(\Pi)$ attached to automorphic representations $\Pi$ of $\Galg(\A_F)$ is identical to Assumption \ref{prop:transfersp}.
We now state the main result for orthogonal groups.
\begin{theo}
\label{theo:mainresorth}
Let $\Pi$ be an irreducible automorphic representation of $\Galg(\A_F)$ having Iwahori invariants at all the places of $F$ above $p$, and having invariants under an open subgroup $U$ of $\Galg(\A_{F,f}^{(p)})$.
Let $N$ be an integer.
There exists an automorphic representation $\Pi'$ of $\Galg(\A_F)$ such that:
\begin{itemize}
\item $\Pi'$ is unramified at the places above $p$, and has invariants under $U$;
\item The restriction of $\rho_{\iota_p,\iota_{\infty}}(\Pi')$ to the decomposition group at any place above $p$ is irreducible;
\item For all $g$ in $G_F$, $\Tr(\rho_{\iota_p,\iota_{\infty}}(\Pi')(g)) \equiv \Tr(\rho_{\iota_p,\iota_{\infty}}(\Pi)(g)) \mod p^N$.
\end{itemize}
\end{theo}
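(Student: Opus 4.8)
The plan is to transcribe the proof of Theorem~\ref{theo:mainressympl} almost verbatim, the only structural change coming from the fact that the standard representation of the dual group $\SO_{4n}(\C)$ is minuscule: its torus has $2n$ coordinates and the weights $\pm e_1,\dots,\pm e_{2n}$ of the standard representation form a single orbit under the Weyl group of type $D_{2n}$, so there is no zero weight and, correspondingly, no Artin character can split off --- which is why the conclusion here is plain irreducibility. I would first record the orthogonal analogues of the formulas relating the crystalline Frobenius eigenvalues to the Atkin--Lehner operators: at an unramified classical point $x$ of the eigenvariety $\Xscr$ refining an automorphic representation, the filtered $\varphi$-module of $\rho_{\iota_p,\iota_{\infty}}(\cdot)|_{G_{F_v}}$ has $\varphi^{f_v}$-eigenvalues $\phi_{v,\pm i}(x)\,q_v^{c_{\pm i}}\prod_{\sigma}\sigma(\varpi_v)^{\pm k_{v,\sigma,\cdot}}$ for $i=1,\dots,2n$ --- with $\phi_{v,-i}=\phi_{v,i}^{-1}$ and $\phi_{v,i}\in\Ocal(\Xscr)^{\times}$ built from the Atkin--Lehner operators exactly as before, and no extra eigenvalue $1$ --- together with the way these transform under the action of the Weyl group on refinements.

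With this in hand I would run the same three deformation steps. Using Proposition~\ref{prop:zardens} and the compactness of $G_F$, and abbreviating the congruence condition as $\Pi'\equiv\Pi\bmod p^N$: (i) move $\Pi$ to a nearby completely refinable unramified $\Pi_1\equiv\Pi\bmod p^N$ whose weights make $\tfrac{2}{e_v}\sum_{i,\sigma}k^{(1)}_{v,\sigma,i}$ large relative to $-v_p\!\big(\phi_{v,1}(x_1)\cdots\phi_{v,2n}(x_1)\big)$; (ii) apply the Weyl element $-\mathrm{Id}$ to the refinement of $\Pi_1$ --- available because $-\mathrm{Id}$ lies in the Weyl group of $D_m$ exactly when $m$ is even, here $m=2n$, and this is the reason the statement is restricted to $\SO_{4n}$ rather than $\SO_{4n+2}$ --- to obtain $x_1'$, then move to a completely refinable unramified $\Pi_2\equiv\Pi\bmod p^N$ near $x_1'$ whose consecutive weight differences dominate the relevant $-v_p(\phi_{v,\cdot}(x_2))$; (iii) apply a cyclic-type Weyl element to get $x_2'$, then move to a completely refinable unramified $\Pi_3\equiv\Pi\bmod p^N$ near $x_2'$ whose weights are regular enough for the crystalline lemma preceding Theorem~\ref{theo:mainressympl} to apply for every $\tau\in\Upsilon_v$. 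Shrinking the affinoids at each stage so that the $v_p(\phi_{v,i})$ are locally constant, the numerical constraints imposed along the way persist at $x_3$.

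Finally I would argue, exactly as for $\Sp_{2n}$, that reducibility of $\rho_{\iota_p,\iota_{\infty}}(\Pi_3)|_{G_{F_v}}$ would produce a proper nonzero subset $I\subset\{-2n,\dots,-1,1,\dots,2n\}$ indexing the $\varphi^{f_v}$-eigenvalues and $\tau$-Hodge--Tate weights of an admissible sub-$\varphi$-module; weak admissibility of this module and of the quotient gives partial-sum inequalities on the $v_p(\phi_{v,i}(x_3))$ which, together with the estimate at $x_2$, force $\{-2n,\dots,-1\}$ to lie entirely in $I$ or entirely in its complement $J$, and likewise for $\{1,\dots,2n\}$, while the estimate at $x_1$ rules out the case where the two blocks go to opposite sets (the valuation of $\phi_{v,1}(x_2')\cdots\phi_{v,2n}(x_2')$ being negative there). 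Since there is no central index $0$ to absorb, $I$ must then be empty or the whole set, contradicting properness; hence $\rho_{\iota_p,\iota_{\infty}}(\Pi_3)|_{G_{F_v}}$ is irreducible at every place $v\mid p$, and $\Pi_3$ has all the required properties. I expect the only real work, beyond translating notation, to be bookkeeping: pinning down the exact powers of $q_v$ in each Frobenius eigenvalue (the half-sum of positive roots of $D_{2n}$ differs from that of $C_n$) and checking that the combinatorics of the two auxiliary Weyl elements still produces the sign pattern needed for the two contradictions. As the minuscule structure makes the endgame strictly simpler than in Theorem~\ref{theo:mainressympl}, I do not anticipate a genuinely new obstacle.
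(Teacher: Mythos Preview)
Your proposal is correct and follows exactly the paper's own approach: the paper's proof simply says it is ``nearly identical'' to that of Theorem~\ref{theo:mainressympl}, notes that the Weyl group of type $D_{2n}$ contains the needed analogues of the two Weyl elements, and points out that the absence of a zero Hodge--Tate weight is what upgrades the conclusion to outright irreducibility. Your outline fleshes this out accurately; one small remark is that the restriction to $\SO_{4n}$ (rather than $\SO_{4n+2}$) in the paper is imposed already at the level of constructing a form of the group that is compact at infinity and split at finite places, though your observation that $-\mathrm{Id}$ lies in $W(D_m)$ only for even $m$ is a genuine additional reason the argument needs $m=2n$.
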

\begin{proof}
The proof is nearly identical to that of Theorem \ref{theo:mainressympl}.
In the orthogonal case the Weyl group is a bit smaller: it is the semi-direct product of $S_{2n}$ and a hyperplane of $\left(\Z/2\Z \right)^{2n}$.
Alternatively, it is the group of permutations $w$ of $\left\{ -2n, \ldots, -1, 1, \ldots, 2n \right\}$ such that $w(-i)=-w(i)$ for all $i$ and $\prod_{i=1}^{2n} w(i) > 0$.
The two elements of the Weyl group used in the proof of Theorem \ref{theo:mainressympl} have natural counterparts in this Weyl group.
The only difference lies in the fact that there is no Hodge-Tate weight equal to $0$ in the orthogonal case, hence the simpler conclusion ``$\rho_{\iota_p,\iota_{\infty}}(\Pi')|_{G_{F_v}}$ is irreducible for $v | p$''.
\end{proof}

\section{The image of complex conjugation: relaxing hypotheses in Taylor's theorem}

Let us apply the previous results to the determination of the image of the complex conjugations under the $p$-adic Galois representations associated with regular, algebraic, essentially self-dual, cuspidal automorphic representations of $\GL_n(\A_F)$, $F$ totally real.
Recall that these representations are constructed by ``patching'' representations of Galois groups of CM extensions of $F$, on Shimura varieties for unitary groups.
The complex conjugations are lost when we restrict to CM fields.
In \cite{Tay}, Taylor proves that the image of any complex conjugation is given by (the ``discrete'' part of) the local Langlands parameter at the corresponding real place, assuming $n$ is odd and the Galois representation is irreducible, by constructing the complex conjugation on the Shimura datum.
Of course the Galois representation associated with a cuspidal representation of $\GL_n$ is conjectured to be irreducible, but unfortunately this is (at the time of writing) still out of reach in the general case (however, see \cite{CalGee} for $n \leq 5$ and \cite[Theorem D]{BLGGT} for a ``density one'' result).

The results of the first part of this paper allow to remove the irreducibility hypothesis in Taylor's theorem, and to extend it to some (``half'') cases of even $n$, using Arthur's endoscopic transfer.
Unfortunately some even-dimensional cases are out of reach using this method, because odd-dimensional essentially self-dual cuspidal representations are (up to a twist) self-dual, whereas some even-dimensional ones are not.

Since the proof is not direct, let us outline the strategy.
First we deduce the even-dimensional self-dual case from Taylor's theorem by adding a cuspidal self-dual (with appropriate weights) representation of $\GL_3$, we get an automorphic self-dual representation of $GL_{2n+3}$ which (up to base change) can be ``transferred'' to a discrete representation of the symplectic group in dimension $2n$.
Since the associated Galois representation contains no Artin character, it can be deformed irreducibly, and Taylor's theorem applies.
Then the general odd-dimensional case is deduced from the even-dimensional one, by essentially the same method, using the eigenvariety for orthogonal groups.

Finally we prove a supplementary, non-regular case, thanks to the fact that discrete Langlands parameters for the group $\SO_{2n}/\R$ are not always discrete when seen as parameters for $\GL_{2n}$, i.e.\ can correspond to a non-regular representation of $\GL_{2n}/\R$.

\subsection{Regular, L-algebraic, self-dual, cuspidal representations of $\GL_{2n}(\A_F)$ having Iwahori-invariants}

In this subsection $\Galg$ will denote the symplectic group in dimension $2n+2$ defined in section \ref{section:sympl}.

The following is due to C.\ Moeglin and J.-L.\ Waldspurger.
\begin{lemm}
\label{lemm:iwahoritransfersp}
Let $K$ be a finite extension of $\Qp$.
Let $\phi : W_K \times \mathrm{SU}(2) \rightarrow \SO_{2n+3}(\C)$ be a Langlands parameter (equivalently, a generic Arthur parameter).
Assume that the subgroup $I \times \{1\}$ ($I$ being the inertia subgroup of $W_K$) is contained in the kernel of $\phi$.

Then the A-packet associated with $\phi$ contains a representation having a non-zero vector fixed under the Iwahori subgroup of $\Sp_{2n+2}(K)$.
\end{lemm}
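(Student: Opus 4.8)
The plan is to reduce the statement to the standard description of the category of Iwahori-spherical representations of a split $p$-adic group (the principal Bernstein block), using the explicit shape of the members of the packet attached to $\phi$ by Arthur's classification. Write $G=\Sp_{2n+2}$, which is split over $K$, so that $\widehat{G}=\SO_{2n+3}(\C)$; since $\phi$ is generic its A-packet $\Pi_\phi$ coincides with the $L$-packet of the tempered Langlands parameter $\phi$.

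First I would unwind the hypothesis: because $I\times\{1\}$ lies in $\ker\phi$, the restriction $\phi|_{W_K}$ factors through $W_K/I$, so the standard $(2n+3)$-dimensional representation decomposes as $\mathrm{std}\circ\phi\simeq\bigoplus_j\chi_j\boxtimes[d_j]$ with each $\chi_j$ an \emph{unramified} character of $W_K$ and $[d_j]$ the $d_j$-dimensional irreducible representation of $\mathrm{SU}(2)$, the multiset $\{(\chi_j,d_j)\}$ being stable under $(\chi,d)\mapsto(\chi^{-1},d)$ by orthogonal self-duality. By the local Langlands correspondence for $G$ (Arthur), $\Pi_\phi$ is the set of irreducible constituents of $\Ind_P^{G(K)}\sigma$, where $P$ has Levi factor $M\simeq\prod_k\GL_{n_k}(K)\times\Sp_{2m}(K)$, $\sigma$ runs over the discrete $L$-packet of the parameter through which $\phi$ factors discretely on $M$, and explicitly $\sigma=\bigotimes_k\mathrm{St}_{n_k}(\chi_k)\otimes\sigma_0$: here $\mathrm{St}_{n_k}(\chi_k)$ is the generalized Steinberg representation with parameter $\chi_k\boxtimes[n_k]$ (the $\GL$-factors absorb the non-self-dual segments of $\phi$ and the even part of the multiplicities of the self-dual ones), and $\sigma_0$ is a discrete series of $\Sp_{2m}(K)$ whose parameter is the remaining multiplicity-free sum of self-dual orthogonal segments $\chi\boxtimes[d]$ (so $\chi^{2}=1$ and $d$ is odd). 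The crucial observation is that all the $\chi_k$ are unramified and the parameter of $\sigma_0$ is trivial on $I$; hence each $\mathrm{St}_{n_k}(\chi_k)$ embeds in an \emph{unramified} principal series of $\GL_{n_k}(K)$, and $\sigma_0$, being a discrete series of $\Sp_{2m}(K)$ whose parameter is trivial on inertia, is Iwahori-spherical by the Kazhdan--Lusztig classification of simple modules over the Iwahori--Hecke algebra. Consequently $\sigma$ embeds in an unramified principal series of $M(K)$, so by induction in stages $\Ind_P^{G(K)}\sigma$ embeds in an unramified principal series of $G(K)$ and therefore lies in the principal block of $G(K)$; since every nonzero object of that block has nonzero Iwahori-fixed vectors, so does every member of $\Pi_\phi$. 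As $\Pi_\phi\neq\varnothing$, this proves the lemma.

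I expect the main obstacle to be the identification of the members of $\Pi_\phi$ used in the second step: one needs the local Langlands correspondence for $\Sp_{2n+2}(K)$ in the normalization fixed by Arthur, \emph{together with} its compatibility with the unramified correspondence on the Levi subgroups --- i.e.\ the fact that a parameter trivial along inertia produces a packet supported in the principal block. On the $\GL$-factors this is classical (Bernstein--Zelevinsky and Satake), but on the symplectic factor it rests on Kazhdan--Lusztig's proof of the Deligne--Langlands conjecture and on its precise compatibility with Arthur's construction of the packets; checking this compatibility is the genuinely nontrivial point, and it is why the lemma is attributed to Moeglin and Waldspurger. The remaining ingredients --- the decomposition of $\phi$, the Iwahori-sphericity of generalized Steinberg representations of $\GL_{n}(K)$, and the stability of the principal block under parabolic induction --- are routine.
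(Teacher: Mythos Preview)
Your reduction to the case where $\phi$ is discrete (absorbing the non-self-dual and repeated segments into $\GL$-factors of a Levi, where Iwahori-sphericity is clear) is essentially the paper's opening move. After that the two arguments diverge, and the step you flag as the obstacle is exactly the one you do not supply. The paper does \emph{not} invoke Kazhdan--Lusztig or any compatibility between their parametrization and Arthur's. For discrete $\phi$ it works on the twisted $\GL_{2n+3}$ side: it writes the associated $\tau$ as an irreducible induction of unramified Steinbergs $\St(\chi_i,n_i)$, chooses a $\theta$-stable Levi
\[
M=\GL_{N_s}\times\cdots\times\GL_{N_1}\times\GL_{N_0}\times\GL_{N_1}\times\cdots\times\GL_{N_s}
\]
determined by the segment lengths, and isolates one explicit isotypic piece of $\tau_{\widetilde M}^{\mathrm{ss}}$ via the geometrical lemma. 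A Jacquet-module version of the twisted character identity --- this is the Moeglin--Waldspurger lemma actually being cited --- then says that the matching isotypic piece of $\sum_i(\Pi_i)_{M'}^{\mathrm{ss}}$ is a stable transfer of it. That reduces the question to the A-packet on $\Sp_{N_0-1}$ attached to the sum of unramified characters $\bigoplus_{n_i\ \mathrm{odd}}\chi_i$, which visibly contains an unramified member; hence some $(\Pi_i)_{M'}$, and therefore some $\Pi_i$, has Iwahori invariants. So the attribution to Moeglin--Waldspurger is for this transfer/Jacquet-module compatibility, not for anything about Kazhdan--Lusztig.

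Your route would in fact prove more (every packet member Iwahori-spherical, not just one), but the assertion ``$\sigma_0$ is Iwahori-spherical because its Arthur parameter is inertia-trivial'' is the discrete case of the very lemma under discussion, one dimension down; trading it for Arthur/Kazhdan--Lusztig compatibility just replaces one unproved input with another that you do not establish. The paper's argument stays entirely inside the character identities that \emph{define} Arthur's packets and thereby avoids the issue.
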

\begin{proof}
Let $\{\Pi_1, \ldots, \Pi_k \}$ denote the A-packet.
Since Arthur's construction of the $\Pi_i$'s is inductive for parameters trivial on the supplementary $\SL_2(\C)$, and subquotients of parabolic inductions of representations having Iwahori-invariants have too, it is enough to prove the result when $\phi$ is discrete.
Let $\tau$ be the irreducible smooth representation of $\GL_{2n+3}(K)$ having parameter $\phi$, then
$\tau \simeq \Ind_{L}^{\GL_{2n+3}} \sigma$, where $\sigma$ is the tensor product of (square-integrable) Steinberg representations $\St(\chi_i,n_i)$ of $\GL_{n_i}(K)$ ($i \in  \{1,\ldots,r\}$), $\chi_i$ are unramified, auto-dual characters of $K^{\times}$ (thus $\chi_i=1$ or $(-1)^{v(\cdot)}$), and the couples $(\chi_i,n_i)$ are distinct.
Here $L$ denotes the standard parabolic associated with the decomposition $2n+3=\sum_i n_i$.
Since $\phi$ is self-dual, $\tau$ can be extended (not uniquely, but this will not matter for our purpose) to a representation of $\widetilde{\GL}^+_{2n+3} = \GL_{2n+3} \rtimes \{1,\theta\}$, where
\[ \theta (g) = \begin{pmatrix}
 &  &  & 1 \\ 
 &  & -1 &  \\ 
 & \iddots &  &  \\ 
1 &  &  & 
\end{pmatrix} {}^tg^{-1} \begin{pmatrix}
 &  &  & 1 \\ 
 &  & -1 &  \\ 
 & \iddots &  &  \\ 
1 &  &  & 
\end{pmatrix} \]
Let also $\widetilde{\GL}_{2n+3} = \GL_{2n+3} \rtimes \theta$.

Let $N_0$ be the number of $i$ such that $n_i$ is odd, and for $j \geq 1$ let $N_j$ be the number of $i$ such that $n_i \geq 2 j$.
Then $N_0 + 2 \sum_{j \geq 1} N_j=2n+3$, and if $s$ is maximal such that $N_s > 0$, we let
\[M = \GL_{N_s} \times \ldots \times \GL_{N_1} \times \GL_{N_0} \times \GL_{N_1} \times \ldots \times \GL_{N_s}\]
which is a $\theta$-stable Levi subgroup of $\GL_{2n+3}$, allowing us to define $\widetilde{M}^+$ and $\widetilde{M}$.
Since the standard (block upper triangular) parabolic containing $M$ is also stable under $\theta$, $\tau_M$ is naturally a representation of $\widetilde{M}^+$, denoted by $\tau_{\widetilde{M}}$.
The constituents of the semi-simplification of $\tau_{\widetilde{M}}$ either stay irreducible when restricted to $M$, in which case they are of the form $\sigma_1 \otimes \sigma_0 \otimes \theta(\sigma_1)$ where $\sigma_1$ is a representation of $\GL_{N_s} \times \ldots \times \GL_{N_1}$ and $\sigma_0$ is a representation of $\widetilde{\GL}_{N_0}$; or they are induced from $M$ to $\widetilde{M}^+$, and the restriction of their character to $\widetilde{M}$ is zero.
Since we are precisely interested in that character, we can forget about the second case.
By the geometrical lemma,
\[ \tau_M^{\mathrm{ss}} \simeq \bigoplus_{w \in W^{L,M}} \Ind_{M \cap w(L)}^{M} w \left(\sigma_{L \cap w^{-1} M}\right)  \]
where $W^{L,M}$ is the set of $w \in S_{2n+3}$ such that $w$ is increasing on $I_1=\{1,\ldots,n_1\}$, $I_2=\{n_1+1,\ldots,n_1+n_2\}$, etc.\ and $w^{-1}$ is increasing on $J_{-s}=\{1,\ldots,N_s\}$, $J_{-s+1}=\{N_s+1,\ldots,N_s+N_{s-1}\}$, etc.
Fix the irreducible representation of $\GL_{N_s} \times \ldots \times \GL_{N_1}$
\[ \sigma_1 = \bigotimes_{j=1}^s \Ind_{T_j}^{\GL_{N_j}} \bigotimes_{i\ |\ n_i \geq 2j} \chi_i |\cdot|^{j-\nu_i} \]
where $T_j$ is the standard maximal torus of $\GL_{N_j}$, $\nu_i=\begin{cases} 0 & n_i \mbox{ odd} \\ 1/2 & n_i \mbox{ even} \end{cases}$.

There is a unique $w$ such that $\Ind_{M \cap w(L)}^{M} w \left(\sigma_{L \cap w^{-1} M}\right)$ admits a subquotient of the form $\sigma_1 \otimes \sigma_0 \otimes \theta(\sigma_1)$ as above, moreover $\Ind_{M \cap w(L)}^{M} w \left(\sigma_{L \cap w^{-1} M}\right)$ is irreducible, and
\[ \sigma_1 = \Ind_{T_0}^{\GL_{N_0}} \bigotimes_{i\ |\ n_i \mbox{ odd}} \chi_i \]
Specifically, $w$ maps the first element of $I_i$ in $J_{-\lfloor(n_i+1)/2 \rfloor}$, the second in $J_{-\lfloor(n_i+1)/2 \rfloor}+1$, \ldots, the central element (if $n_i$ is odd) in $J_0$, etc.

Let $M'$ be the parabolic subgroup of $\Sp_{2n+2} \slash K$ corresponding to $M$, i.e.\ 
\[ M' = \GL_{N_s} \times \ldots \times \GL_{N_1} \times \Sp_{N_0-1} \]

By \cite[2.2.6]{Arthur}, $\sum_i \Tr \Pi_i$ is a stable transfer of $\Tr_{\GL_{2n+3}^+} \tau$.
By \cite[Lemme 4.2.1]{MWtransfert} (more accurately, the proof of the lemma),
\[ \sum_i \Tr \left( (\Pi_i)_{M'}^{\mathrm{ss}}[\sigma_1] \right) \]
is a stable transfer of $\Tr \left( \tau_{\widetilde{M}}^{\mathrm{ss}} [\sigma_1] \right)$ (where $\cdot[\cdot]$ denotes the isotypical component on the factor $\GL_{N_s} \times \ldots \times \GL_{N_1}$).

Since $\tau_{\widetilde{M}}^{\mathrm{ss}} [\sigma_1] = \sigma_1 \otimes \sigma_0 \otimes \theta (\sigma_1)$, the stable transfer of $\Tr \left( \tau_{\widetilde{M}}^{\mathrm{ss}} [\sigma_1] \right)$ is equal to the product of $\Tr (\sigma_1)$ and $\sum_{l} \Tr \Pi'_l$ where the $\Pi'_l$ are the elements of the A-packet associated with the parameter
\[ \bigoplus_{i\ |\ n_i \mbox{ odd}} \chi_i \]
At least one representation $\Pi'_l$ is unramified for some hyperspecial compact subgroup of $\Sp_{N_0-1}(K)$, and so a Jacquet module of a $\Pi_i$ contains a nonzero vector fixed by an Iwahori subgroup.
This proves that at least one of the $\Pi_i$ has Iwahori-invariants.
\end{proof}

\begin{assu}
\label{lemm:transfergl2sp}
Let $F_0$ be a totally real field, and let $\pi$ be a regular, L-algebraic, self-dual, cuspidal (RLASDC) representation of $\GL_{2n}(\A_{F_0})$.
Assume that for any place $v|p$ of $F_0$, $\pi_v$ has vectors fixed under an Iwahori subgroup of $\GL_{2n}(\A_{F_{0,v}})$.
Then there exists a RLASDC representation $\pi_0$ of $\GL_3(\A_{F_0})$, a totally real extension $F/F_0$ which is trivial, quadratic or quartic, and an automorphic representation $\Pi$ of $\Galg(\A_F)$ such that
\begin{enumerate}
\item For any place $v|p$ of $F_0$, $\pi_{0,v}$ is unramified.
\item $\BC_{F/F_0}(\pi)$ and $\BC_{F/F_0}(\pi_0)$ remain cuspidal.
\item For any place $v$ of $F$ above $p$, $\Pi_v$ has invariants under the action of the Iwahori subgroup $G_0$ of $\Galg(F_v)$.
\item For any finite place $v$ of $F$ such that $\BC_{F/F_0}(\pi)_v$ and $\BC_{F/F_0}(\pi_0)_v$ are unramified, $\Pi_v$ is unramified, and via the inclusion $\SO_{2n+3}(\C) \hookrightarrow \GL_{2n+3}(\C)$, the Satake parameter of $\Pi_v$ is equal to the direct sum of those of $\BC_{F/F_0}(\pi)_v$ and $\BC_{F/F_0}(\pi_0)_v$.
\end{enumerate}
\end{assu}

Let us comment briefly on the proof to come.
First we construct $\pi_0$.
Let $\delta$ be a cuspidal automorphic representation of $\mathrm{PGL}_2/F_0$ which is unramified at the $p$-adic places, Steinberg at the $\ell$-adic places for some arbitrary prime $\ell \neq p$, and whose local langlands parameters at the real places are of the form $\Ind_{W_{\C}}^{W_{\R}} \left( z \mapsto (z/\bar{z})^a \right)$ where $a$ is a half-integer big enough with respects to their analogues appearing in the local Langlands parameters of $\pi$.
Such a representation exists thanks to \cite[Theorem 1B]{Clo2}.
Let $\pi_0$ be the automorphic representation of $\GL_3/F_0$ obtained by fonctoriality from $\delta$ through the adjoint representation of $\widehat{\mathrm{PGL}_2} (\C) = \SL_2(\C)$ on its Lie algebra.
The representation $\pi_0$ exists and is cuspidal by \cite[Theorem 9.3]{GelJac}.
The condition at the $\ell$-adic places ensures that no nontrivial twist of $\delta$ (seen as a representation of $\GL_2/F_0$) is isomorphic to $\delta$, and the cuspidality of $\pi_0$ follows.
We can twist $\pi_0$ by the central character of $\pi$, to ensure that $\pi \oplus \pi_0$ has trivial central character.
Clearly $\pi_0$ is a RLASDC representation of $\GL_3/F_0$.

Note that for $\BC_{F/F_0}(\pi)$ and $\BC_{F/F_0}(\pi_0)$ to remain cuspidal, it is enough for $F/F_0$ to be totally ramified above a finite place of $F_0$ at which $\pi$ and $\pi_0$ are unramified.
To begin with one can choose such a quadratic extension of $F_0$, in order to define $\Galg$.
The automorphic representation $\Psi := \BC_{F/F_0}(\pi) \oplus \BC_{F/F_0}(\pi_0)$ can be seen as a global, orthogonal parameter.
This determines a global packet $P_{\Psi}$ of representations of $\Galg(\A_F)$, and Arthur's results shall attach to each $\Pi \in P_{\Psi}$ a character of $S_{\Psi} \simeq \Z/2\Z$, and characterize the automorphic $\Pi$'s as the ones whose character is trivial.
We can choose the components $\Pi_v$ at the finite places of $F$ not lying above $p$ to be associated with a trivial character of $S_{\Psi_v}$, and taking a quadratic extension split above the $p$-adic and real places of $F$ (at which $\Pi_v$ is imposed) allows to ``double'' the contribution of the characters, thus yielding a trivial global character.

\begin{prop}
\label{prop:iwahorisympl}
Let $F$ be a totally real field, and let $\pi$ be a regular, L-algebraic, self-dual, cuspidal representation of $\GL_{2n}(\A_F)$.
Suppose that for any place $v$ of $F$ above $p$, $\pi_v$ has invariants under an Iwahori subgroup.
Then for any complex conjugation $c \in G_F$, $\Tr(\rho_{\iota_p,\iota_{\infty}}(\pi)(c))=0$.
\end{prop}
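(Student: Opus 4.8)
The plan is to transfer $\pi$, augmented by a carefully chosen cuspidal representation of $\GL_3$, to an inner form of $\Sp_{2n+2}$, to deform on its eigenvariety until the associated Galois representation becomes irreducible, and then to invoke Taylor's theorem. A totally real base change $F'/F$ satisfies $\rho_{\iota_p,\iota_{\infty}}(\BC_{F'/F}(\pi)) = \rho_{\iota_p,\iota_{\infty}}(\pi)|_{G_{F'}}$, and every complex conjugation of $G_F$ is conjugate to one lying in $G_{F'}$, so it is enough to bound $\Tr(\rho_{\iota_p,\iota_{\infty}}(\pi)(c'))$ for complex conjugations $c' \in G_{F'}$. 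I would apply Assumption \ref{lemm:transfergl2sp} with $F_0 = F$. It produces: a RLASDC representation $\pi_0$ of $\GL_3(\A_F)$ which, up to a quadratic twist, is the functorial symmetric-square lift of a non-dihedral cuspidal representation of $\mathrm{PGL}_2(\A_F)$, so that $\rho_{\iota_p,\iota_{\infty}}(\pi_0)$ exists (Theorem \ref{theo:galrepgl}) and is irreducible; a totally real extension $F'/F$ over which $\BC(\pi)$ and $\BC(\pi_0)$ remain cuspidal; and an irreducible automorphic representation $\Pi$ of $\Galg(\A_{F'})$, where $\Galg$ is the inner form of $\Sp_{2n+2}/F'$ of Section \ref{section:sympl}, unramified outside a finite set, Iwahori-spherical at the places above $p$ (this last point uses Lemma \ref{lemm:iwahoritransfersp}), with $\rho_{\iota_p,\iota_{\infty}}(\Pi) = \rho_{\iota_p,\iota_{\infty}}(\BC(\pi)) \oplus \rho_{\iota_p,\iota_{\infty}}(\BC(\pi_0))$. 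Since $\pi$ is self-dual and regular, its archimedean parameters are $\bigoplus_{i=1}^n \Ind_{W_{\C}}^{W_{\R}}(z \mapsto (z/\bar z)^{a'_i})$ with the $a'_i$ distinct positive integers, so all Hodge--Tate weights of $\rho_{\iota_p,\iota_{\infty}}(\BC(\pi))$ are nonzero, while $\rho_{\iota_p,\iota_{\infty}}(\BC(\pi_0))$ is irreducible of dimension $3$; hence no Artin character occurs as a $G_{F'}$-subrepresentation of $\rho_{\iota_p,\iota_{\infty}}(\Pi)$.

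Now fix a large integer $N$ and apply Theorem \ref{theo:mainressympl} to $\Pi$, obtaining $\Pi'$ on $\Galg(\A_{F'})$, unramified above $p$, of the same tame level, with $\Tr\rho_{\iota_p,\iota_{\infty}}(\Pi')(g) \equiv \Tr\rho_{\iota_p,\iota_{\infty}}(\Pi)(g) \bmod p^N$ for all $g$, and with $\rho_{\iota_p,\iota_{\infty}}(\Pi')|_{G_{F'_v}}$, for every $v \mid p$, either irreducible or a sum of an Artin character and an irreducible $(2n+2)$-dimensional representation. I claim that for $N$ large $\rho_{\iota_p,\iota_{\infty}}(\Pi')$ is \emph{globally} irreducible. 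The local structure above $p$ bounds the number of irreducible $G_{F'}$-constituents of $\rho_{\iota_p,\iota_{\infty}}(\Pi')$ by two, and in the two-constituent case one of them is a $1$-dimensional Galois character which, being crystalline of Hodge--Tate weight $0$ above $p$ and unramified outside the fixed tame level, belongs to a finite set $X$ of quadratic Artin characters. So it suffices to prevent any $\chi \in X$ from occurring in $\rho_{\iota_p,\iota_{\infty}}(\Pi')$. I would do this by re-examining the construction of $\Pi'$ in the proof of Theorem \ref{theo:mainressympl}: $\Pi'$ is reached from $\Pi$ along a finite chain of classical points of $\Xscr$, alternating $p$-adically small moves with changes of refinement, the latter not altering the attached Galois representation. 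For each $\chi \in X$, the locus of points of $\Xscr$ at which $\chi$ is a constituent of the attached representation is Zariski closed (a reducibility locus of the type studied by Bellaïche--Chenevier) and does not contain the point attached to $\Pi$; using the accumulation statement of Proposition \ref{prop:zardens} one can keep all intermediate points of the chain off these finitely many loci while still meeting the slope and weight inequalities needed in the proof of Theorem \ref{theo:mainressympl}. Arranging this choice simultaneously is the step I expect to be the main obstacle.

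Granting the claim, $\rho_{\iota_p,\iota_{\infty}}(\Pi') = \rho_{\iota_p,\iota_{\infty}}(\pi')$ for a cuspidal RLASDC representation $\pi'$ of $\GL_{2n+3}(\A_{F'})$, which is moreover regular because $\Pi'$ is produced at a point of very regular weight. Taylor's theorem then gives $|\Tr\rho_{\iota_p,\iota_{\infty}}(\pi')(c')| = 1$. Choosing $N$ with $p^N > 2(2n+3)$, the congruence of traces at $c'$ becomes an equality of integers:
\[ \Tr\rho_{\iota_p,\iota_{\infty}}(\BC(\pi))(c') + \Tr\rho_{\iota_p,\iota_{\infty}}(\BC(\pi_0))(c') = \Tr\rho_{\iota_p,\iota_{\infty}}(\Pi')(c') = \pm 1. \]
Taylor's theorem applied to $\pi_0$ (regular, self-dual, cuspidal on $\GL_3$, with irreducible Galois representation) gives $|\Tr\rho_{\iota_p,\iota_{\infty}}(\BC(\pi_0))(c')| = 1$, so $\Tr\rho_{\iota_p,\iota_{\infty}}(\BC(\pi))(c') \in \{-2,0,2\}$.

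Finally, $\rho_{\iota_p,\iota_{\infty}}(\pi)(c')$ is an involution, hence has eigenvalues $\pm 1$, and $\det\rho_{\iota_p,\iota_{\infty}}(\pi)$ is the finite-order character attached to the central character of $\pi$ (there is no cyclotomic twist, as its Hodge--Tate weights vanish), whose value at $c'$ equals $(-1)^n$ (it is $\mathrm{sgn}^n$ at the corresponding real place). Thus the number of $(-1)$-eigenvalues of $\rho_{\iota_p,\iota_{\infty}}(\pi)(c')$ has the parity of $n$, which forces $\Tr\rho_{\iota_p,\iota_{\infty}}(\pi)(c') \equiv 0 \bmod 4$; combined with the displayed relation this yields $\Tr\rho_{\iota_p,\iota_{\infty}}(\pi)(c') = 0$. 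Since $\Tr\rho_{\iota_p,\iota_{\infty}}(\pi)(c') = \Tr\rho_{\iota_p,\iota_{\infty}}(\pi)(c)$ for the complex conjugation $c$ of $G_F$ below $c'$, and every complex conjugation of $G_F$ arises this way, the proposition follows.
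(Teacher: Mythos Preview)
Your overall strategy matches the paper's, but the step you flag as ``the main obstacle'' is a genuine gap that you have not closed, and it is also unnecessary. You propose to re-enter the proof of Theorem \ref{theo:mainressympl} and steer each intermediate classical point off the finitely many reducibility loci $\{\chi\text{ occurs}\}$; this would require controlling the Galois representation at the points $x_1',x_2'$ obtained by changing refinement, where the weight is fixed but the point on $\Xscr$ jumps, and you give no argument for why such avoidance is compatible with the slope and weight constraints at every stage.

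The paper bypasses this entirely using only the \emph{statement} of Theorem \ref{theo:mainressympl}. Since no $\chi\in X$ occurs in $\rho_{\iota_p,\iota_\infty}(\Pi)$, the function $T_x-\chi$ is not a pseudocharacter of dimension $2n+2$; concretely there exist $g_{\chi,1},\dots,g_{\chi,2n+3}\in G_{F'}$ with
\[
t_\chi:=\sum_{\sigma\in S_{2n+3}} (T_x-\chi)_\sigma(g_{\chi,1},\dots,g_{\chi,2n+3})\neq 0.
\]
Now simply take $N>\max_{\chi\in X} v_p(t_\chi)$ (and $p^N>2n+4$) \emph{before} invoking Theorem \ref{theo:mainressympl}. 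The conclusion $\Tr\rho_{\iota_p,\iota_\infty}(\Pi')\equiv\Tr\rho_{\iota_p,\iota_\infty}(\Pi)\pmod{p^N}$ then forces the analogous quantity for $\Pi'$ to be nonzero for every $\chi$, so no $\chi\in X$ occurs in $\rho_{\iota_p,\iota_\infty}(\Pi')$, and by the local dichotomy $\rho_{\iota_p,\iota_\infty}(\Pi')$ is irreducible. No inspection of the eigenvariety chain is needed: the congruence of traces is exactly what survives both the $p$-adic moves and the changes of refinement.

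Your endgame is correct. The paper sharpens it slightly by using $\det\rho_{\iota_p,\iota_\infty}(\Pi')=1$ to pin down $\Tr\rho_{\iota_p,\iota_\infty}(\Pi')(c')=(-1)^{n+1}$ exactly, and likewise $\Tr\rho_{\iota_p,\iota_\infty}(\pi_0)(c')=(-1)^{n+1}$ from $\det\rho_{\iota_p,\iota_\infty}(\pi_0)(c')=(-1)^n$ together with Taylor's $\pm 1$; subtraction then gives $0$ directly, without the $\bmod\ 4$ argument.
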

\begin{proof}
By the previous assumption, up to a (solvable) base change to a totally real extension (which only restricts the Galois representation to this totally real field, so that we get even more complex conjugations), we can take a RLASDC representation $\pi_0$ of $\GL_3(\A_F)$ and transfer $\pi \oplus \pi_0$ to an automorphic representation $\Pi$ of $\Galg(\A_F)$.
The representation $\Pi$ defines (at least) one point $x$ of the eigenvariety $\Xscr$ defined by $\Galg$ (and by an open subgroup $U$ of $\Galg(\A_{F,f}^{(p)})$).
Of course, by the \v{C}ebotarev density theorem and the compatibility of the transfer at the unramified places, the representation associated with $\Pi$ is equal to $\rho_{\iota_p,\iota_{\infty}}(\pi) \oplus \rho_{\iota_p,\iota_{\infty}}(\pi)$.
Since the Hodge-Tate weights of $\rho_{\iota_p,\iota_{\infty}}(\pi)|_{G_{F_v}}$ are non-zero for any place $v|p$, $\rho_{\iota_p,\iota_{\infty}}(\pi)$ does not contain an Artin character.
By \cite{BlaRog}, $\rho_{\iota_p,\iota_{\infty}}(\pi_0)$ is irreducible and thus does not contain any character.
There are only finitely many Artin characters taking values in $\{\pm 1\}$ and unramified at all the finite places at which $\Pi$ is unramified.
For any such character $\eta$, the pseudocharacter $T$ on the eigenvariety is such that $T_x - \eta$ is not a pseudocharacter, hence we can find $g_{\eta,1},\ldots,g_{\eta,2n+3}$ such that
\[ t_{\eta} := \sum_{\sigma \in S_{2n+3}} (T_x-\eta)_{\sigma}(g_{\eta,1},\ldots,g_{\eta,2n+3}) \neq 0 \]
Let us choose $N$ greater than all the $v_p(t_{\eta})$ and such that $p^N > 2n+4$.
Let $\Pi'$ be an automorphic representation of $\Galg(\A_F)$ satisfying the requirements of Theorem \ref{theo:mainressympl} for this choice of $N$.
Then the $\Tr(\rho_{\iota_p,\iota_{\infty}}(\Pi'))-\eta$ are not pseudocharacters, thus $\rho_{\iota_p,\iota_{\infty}}(\Pi')$ does not contain an Artin character and by Theorem \ref{theo:mainressympl} it is irreducible.
This Galois representation is (by construction in the proof  of Corollary \ref{coro:galrepsp}) the direct sum of representations associated with cuspidal representations.
Since it is irreducible, there is only one of them, and it has the property that its associated Galois representations is irreducible, so that the theorem of \cite{Tay} can be applied: for any complex conjugation $c \in G_F$, $\Tr(\rho_{\iota_p,\iota_{\infty}}(\Pi')(c))=\pm 1$.
Since $\det \rho_{\iota_p,\iota_{\infty}}(\Pi') = 1$, $\Tr(\rho_{\iota_p,\iota_{\infty}}(\Pi')(c))= (-1)^{n+1}$.

As $p^N > 2n+4$ and $|\Tr(\rho_{\iota_p,\iota_{\infty}}(\Pi)(c)) - \Tr(\rho_{\iota_p,\iota_{\infty}}(\Pi')(c))| \leq 2n+4$, we can conclude that $\Tr(\rho_{\iota_p,\iota_{\infty}}(\Pi)(c))=(-1)^{n+1}$, and hence that $\Tr(\rho_{\iota_p,\iota_{\infty}}(\pi)(c))+\Tr(\rho_{\iota_p,\iota_{\infty}}(\pi_0)(c))=(-1)^{n+1}$.
We also know that $\det \rho_{\iota_p,\iota_{\infty}}(\pi_0) = \det \rho_{\iota_p,\iota_{\infty}}(\pi)(c) = (-1)^n$, and that $\Tr(\rho_{\iota_p,\iota_{\infty}}(\pi_0)(c))= \pm 1$ by Taylor's theorem, from which we can conclude that $\Tr(\rho_{\iota_p,\iota_{\infty}}(\pi_0)(c))=(-1)^{n+1}$.
Thus $\Tr(\rho_{\iota_p,\iota_{\infty}}(\pi)(c))=0$.
\end{proof}

\subsection{Regular, L-algebraic, self-dual, cuspidal representations of $\GL_{2n+1}(\A_F)$ having Iwahori-invariants}

In this subsection, $\Galg$ is the orthogonal reductive group defined in section \ref{section:orth}, of dimension $2n+2$ if $n$ is odd, $2n+4$ if $n$ is even.

\begin{lemm}
\label{lemm:iwahoritransferso}
Let $K$ be a finite extension of $\Qp$.
Let $\phi : W_K \times \mathrm{SU}(2) \rightarrow \SO_{2m}(\C)$ be a Langlands parameter.
Assume that the subgroup $I \times \{1\}$ ($I$ being the inertia subgroup of $W_K$) is contained in the kernel of $\phi$.

Then the packet of representations of the split group $\SO_{2m}(K)$ associated with $\phi$ by Arthur contains a representation having a non-zero vector fixed under the Iwahori subgroup.
\end{lemm}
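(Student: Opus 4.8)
The plan is to transcribe the proof of Lemma~\ref{lemm:iwahoritransfersp}, replacing $\Sp_{2n+2}(K)$ by the split $\SO_{2m}(K)$ and the twisted group $\widetilde{\GL}_{2n+3}$ by the twisted general linear group $\widetilde{\GL}_{2m}=\GL_{2m}\rtimes\theta$ attached to even orthogonal twisted endoscopy. As there, one first reduces to the case where $\phi$ is discrete: a Langlands parameter is a generic Arthur parameter, for which Arthur builds the packet inductively along Levi subgroups, so that if $\phi$ is non-discrete it factors through a proper $\theta$-stable Levi $\GL_{a_1}\times\cdots\times\GL_{a_k}\times\SO_{2m'}$ of $\SO_{2m}$ with $m'<m$, and each member of the packet of $\phi$ is a Jordan--H\"older constituent of a parabolic induction $\Ind_P^{\SO_{2m}}(\pi_1\otimes\cdots\otimes\pi_k\otimes\Pi')$ where $\Pi'$ runs through the packet of the $\SO_{2m'}$-part and each $\pi_i$ is the (single) representation of $\GL_{a_i}(K)$ with the prescribed parameter; since $\phi|_{I\times\{1\}}$ is trivial, each $\pi_i$ is a parabolic induction of Steinberg-type representations of unramified characters, hence Iwahori-spherical, and since the functor of Iwahori-invariants is exact, some constituent of the induced representation has nonzero Iwahori-invariants as soon as some $\Pi'$ does. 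One concludes by induction on $m$.

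So assume $\phi$ discrete. Because $\phi|_{I\times\{1\}}$ is trivial, the composition of $\phi$ with the standard representation of $\SO_{2m}(\C)$ is $\bigoplus_{i=1}^{r}\chi_i\,\nu_{n_i}$, where $\nu_{n_i}$ is the $n_i$-dimensional irreducible representation of the Deligne $\mathrm{SU}(2)$ and each $\chi_i$ is an unramified character of $K^{\times}$; discreteness makes the pairs $(\chi_i,n_i)$ distinct and each $\chi_i$ auto-dual (so $\chi_i=\triv$ or the unramified quadratic character), and since $\chi_i\,\nu_{n_i}$ must be of orthogonal type while a character is orthogonal, \emph{every} $n_i$ is odd. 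Let $\tau\simeq\Ind_L^{\GL_{2m}}\bigotimes_i\St(\chi_i,n_i)$ be the corresponding representation of $\GL_{2m}(K)$, extended to $\widetilde{\GL}_{2m}^{+}$, and let $N_0$ (the number of $i$ with $n_i$ odd, here $N_0=r$, which is even since $\sum_i n_i=2m$) and $N_j$ (the number of $i$ with $n_i\geq 2j$) be as in the symplectic proof, so that $N_0+2\sum_{j\geq1}N_j=2m$. The $\theta$-stable Levi $M=\GL_{N_s}\times\cdots\times\GL_{N_1}\times\GL_{N_0}\times\GL_{N_1}\times\cdots\times\GL_{N_s}$ of $\GL_{2m}$ now corresponds to the $\theta$-stable Levi $M'=\GL_{N_s}\times\cdots\times\GL_{N_1}\times\SO_{N_0}$ of the split $\SO_{2m}$: the only structural change from the symplectic case is that the residual classical factor is the split even orthogonal group $\SO_{N_0}$ rather than $\Sp_{N_0-1}$. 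Running the geometric-lemma computation of $\tau_M^{\mathrm{ss}}$ verbatim, one isolates the constituent $\sigma_1\otimes\sigma_0\otimes\theta(\sigma_1)$ of $\tau_{\widetilde M}$, with $\sigma_1=\bigotimes_{j=1}^{s}\Ind_{T_j}^{\GL_{N_j}}\bigotimes_{i\,:\,n_i\geq 2j}\chi_i|\cdot|^{\,j}$ and $\sigma_0=\Ind_{T_0}^{\GL_{N_0}}\bigotimes_{i=1}^{r}\chi_i$.

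Exactly as in the proof of Lemma~\ref{lemm:iwahoritransfersp}, combining Arthur's stable transfer \cite[2.2.6]{Arthur} (which applies to the split even orthogonal groups) with \cite[Lemme 4.2.1]{MWtransfert} shows that $\sum_i\Tr\big((\Pi_i)_{M'}^{\mathrm{ss}}[\sigma_1]\big)$ is a stable transfer of $\Tr\big(\tau_{\widetilde M}^{\mathrm{ss}}[\sigma_1]\big)=\Tr(\sigma_1)\cdot\Tr\big(\sigma_0\otimes\theta(\sigma_1)\big)$, hence equals $\Tr(\sigma_1)\cdot\sum_l\Tr\Pi_l'$, where $\{\Pi_l'\}$ is the packet of the parameter $\bigoplus_{i=1}^{r}\chi_i$ for the split group $\SO_{N_0}(K)$ (this parameter lands in $\SO_{N_0}(\C)$ because $\prod_i\chi_i=\prod_i\chi_i^{n_i}=\det\phi=\triv$). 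Being unramified, and $\SO_{N_0}$ being split, this residual parameter has the unramified constituent of the unramified principal series $\Ind(\chi_1,\ldots)$ in its packet; hence some $\Pi_l'$ has a nonzero vector fixed by a hyperspecial maximal compact subgroup, and since $\sigma_1$ is an unramified principal series the representation $\sigma_1\otimes\Pi_l'$ of $M'(K)$ is Iwahori-spherical, forcing some Jacquet module $(\Pi_i)_{M'}$, and hence $\Pi_i$ itself, to carry a nonzero Iwahori-fixed vector. I do not expect a genuinely new obstacle: the only point requiring care --- as already for Theorem~\ref{theo:mainresorth} --- is that Arthur's parametrization of $\SO_{2m}$ is taken up to the outer automorphism coming from $\mathrm{O}_{2m}$, so one must check that the transfer identity above and the membership of an Iwahori-spherical representation in the resulting packet are insensitive to this ambiguity; this holds because everything in sight is built from explicit Iwahori-spherical inductions, which are stable (up to isomorphism) under that automorphism.
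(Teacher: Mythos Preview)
Your reduction to the discrete case and the identification of the $\theta$-stable Levi $M$ and its orthogonal counterpart $M'$ are fine, and your observation that every $n_i$ is odd (so $\nu_i=0$ throughout) is correct and even simplifies the combinatorics. The genuine gap is in the sentence ``Exactly as in the proof of Lemma~\ref{lemm:iwahoritransfersp}, combining Arthur's stable transfer \cite[2.2.6]{Arthur} \ldots\ with \cite[Lemme 4.2.1]{MWtransfert}\ldots''. In the symplectic case the transfer factors between $\widetilde{\GL}_{2n+3}$ and $\Sp_{2n+2}$ are identically $1$, and this is what makes Moeglin--Waldspurger's lemma go through as stated. For the pair $(\widetilde{\GL}_{2m},\SO_{2m})$ the transfer factors $\Delta_{\widetilde{\GL}_{2m},\SO_{2m}}$ are \emph{not} trivial, and Moeglin--Waldspurger did not write their lemma in this setting; one cannot simply cite it.

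What the paper actually does is redefine ``stable transfer'' via the character identity $\Theta_{\tau^H}(h)\,\Delta(h,g)=\Theta_{\widetilde\tau}(g)$ with the Kottwitz--Shelstad factors present, and then prove the descent-to-Levi statement by checking the compatibility
\[
\Delta_{\widetilde{\GL}_{N_0},\SO_{N_0}}(h_0,g_0)=\Delta_{\widetilde{\GL}_{2m},\SO_{2m}}\big((g_1,h_0),(g_1,g_0)\big)
\]
for strongly regular classes. This is done by invoking Waldspurger's explicit formula for these factors (a product of quadratic signs $\mathrm{sign}_{K_i/K_{\pm i}}(C_i)$ over the field-type indices $i\in I^*$) and showing that the ratio $C_i/C_{i,0}$ lies in the norm group, so that the extra $\GL$-block contributes trivially. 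Only after this computation does the argument ``some $(\Pi_i)_{M'}$ has Iwahori invariants'' go through. Your remark about the $\mathrm{O}_{2m}$ outer automorphism is not the real obstacle; the transfer-factor compatibility is, and your sketch skips it entirely.
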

\begin{proof}
Of course this result is very similar to \ref{lemm:iwahoritransfersp}.
However Moeglin and Waldspurger have not put their lemma in writing in this case, and the transfer factors are no longer trivial, so that one needs to modify the definition of ``stable transfer''.
For this one needs to use the transfer factors $\Delta_{\widetilde{\GL}_{2m},\SO_{2m}}(\cdot ,\cdot)$ defined in \cite{KS}.
They depend in general on the choice of an inner class of inner twistings \cite[1.2]{KS} (in our case an inner class of isomorphisms between $\GL_{2m}/K$ and its quasi-split inner form defined over $\bar{K}$, which we just take to be the identity), and a Whittaker datum of the quasi-split inner form.
Arthur chooses the standard splitting of $\GL_{2m}$ and an arbitrary character $K \rightarrow \C^{\times}$, but this will not matter to us since both $\GL_{2m}$ and $\SO_{2m}$ are \emph{split}, so that the factor $\langle z_J, s_J \rangle$ of \cite[4.2]{KS} (by which the transfer factors are multiplied when another splitting is chosen) is trivial.
Indeed to compute this factor we can choose the split torus $T_H$ of $\SO_{2m}/K$, which is a norm group (see \cite[Lemma 3.3B]{KS}) for the split torus $T$ of $\GL_{2m}/K$, and thus, using the notations of \cite[4.2]{KS}, $T^x$ is split and $H^1(K,T^x)$ is trivial, so that $z'=1$ ($z_J$ is the image of $z'$ in $H^1(K,J)$, so that it is trivial).
Since both groups are split the $\epsilon$-factor of \cite[5.3]{KS} is also trivial, so the transfer factors are canonical.

Let $H = \SO_{2m}(K)$, $\widetilde{\tau}$ the representation of $\widetilde{\GL}^+_{2m}$ associated with $\phi$, and $\tau^H$ the sum of the elements of the packet associated by Arthur with $\phi$.
This packet is in general bigger than the conjectural A-packet, because it corresponds to the $O_{2m}(\C)$-conjugacy class of the parameter $\phi$, and thus it can be the disjoint union of two A-packets.
Arthur shows (\cite[8.3]{Arthur}) that the following character identity holds:
\begin{eqnarray} \Theta_{\tau^H}(h) \Delta(h,g) = \Theta_{\widetilde{\tau}}(g) \label{equ:charident} \end{eqnarray}
whenever the stable $O_{2m}(K)$-conjugacy class $h$ in $\SO_{2m}(K)$ is the (in this case, there is at most one) norm of the conjugacy class $g$ in $\widetilde{\GL}_{2m}(K)$, both assumed to be strongly $\widetilde{\GL}^+_{2m}$-regular.
Here the terms $\Delta_{\mathrm{IV}}$ are left out of the product defining the transfer factors $\Delta$, as in \cite{MWtransfert}.
This is the natural generalization of the notion of ``stable transfer'' of \cite{MWtransfert}.

Let
\[M = \GL_{N_s} \times \ldots \times \GL_{N_1} \times \GL_{N_0} \times \GL_{N_1} \times \ldots \times \GL_{N_s}\]
be a $\theta$-stable Levi subgroup of $\GL_{2m}$, and $M' = \GL_{N_s} \times \ldots \times \GL_{N_1} \times \SO_{N_0}$ the corresponding parabolic subgroup of $\SO_{2m}$.
To mimic the proof of \ref{lemm:iwahoritransfersp}, we only need to show that $\Tr \left( \tau^H_{M'} \right)$ is a stable transfer of $\Tr_{\widetilde{M}} \left(\tau_{\widetilde{M}}\right)$, where ``stable transfer'' has the above meaning, that is the character identity \ref{equ:charident} involving transfer factors.
Note that $\widetilde{M}^+$ has a factor $\GL_{m-N_0/2} \times \GL_{m-N_0/2}$ together with the automorphism $\theta(a,b)=(\theta(b),\theta(a))$, for which the theory of endoscopy is trivial: $\theta$-conjugacy classes are in bijection with conjugacy classes in $\GL_{m-N_0/2}$ (over $K$ or $\bar{K}$) via $(a,b)\mapsto a \theta(b)$ and the $\theta$-invariant irreducible representations are the ones of the form $\sigma \otimes \theta(\sigma)$.

So we need to check that if $g=(g_1,g_0)$ is a strongly regular $\GL_{2m}(K)$-conjugacy class in $\widetilde{\GL}_{2m}(K)$ determined by a conjugacy class $g_1$ in $\GL_{m-N_0/2}(K)$ and a $\GL_{N_0}(K)$-conjugacy class $g_0$ in $\widetilde{\GL}_{N_0}(K)$, and if $h_0$ is the $O_{2m}(\bar{K})$-conjugacy class in $\SO_{2m}(K)$ corresponding to $g_0$, then
\[ \Delta_{\widetilde{\GL}_{N_0},\SO_{N_0}}(h_0,g_0) = \Delta_{\widetilde{\GL}_{2m},\SO_{2m}}((g_1,h_0),(g_1,g_0)) \]
Fortunately the transfer factors have been computed by Waldspurger in \cite{Wald}.
We recall his notations and formulas.
The conjugacy class $g_1$, being regular enough, is parametrized by a finite set $I_1$, a collection of finite extensions $K_{\pm i}$ of $K$ for $i \in I_1$, and (regular enough, i.e.\ generating $K_{\pm i}$ over $K$) elements $x_{i,1} \in K_{\pm i}$.
As in \cite{Wald}, $g_0$ is parametrized by a finite set $I_0$, finite extensions $K_{\pm i}$ of $K$, $K_{\pm i}$-algebras $K_i$, and $x_i \in K_i$.
Each $K_i$ is either a quadratic field extension of $K_{\pm i}$ or $K_{\pm i} \times K_{\pm i}$, and $x_i$ is determined only modulo $N_{K_i/K_{\pm i}} K_i^{\times}$.
Then $g$ is parametrized by $I=I_1 \sqcup I_0$, with $K_i = K_{\pm i} \times K_{\pm i}$ and $x_i=(x_{i,1},1)$ for $i \in I_1$, and the same data for $I_0$.
Let $\tau_i$ be the non-trivial $K_{\pm i}$-automorphism of $K_i$, and $y_i = -x_i/\tau_i(x_i)$.
Let $I^*$ be the set of $i \in I$ such that $K_i$ is a field (so $I^* \subset I_0$).
For any $i \in I$, let $\Phi_i$ be the set of $K$-morphisms $K_i \rightarrow \bar{K}$, and let $P_I(T) = \prod_{i \in I} \prod_{\phi \in \Phi_i} (T - \phi(y_i))$.
Define $P_{I_0}$ similarly.
For $i \in I^*$ (resp.\ $I_0^*$), let $C_i = x_i^{-1} P_I'(y_i)P_I(-1)y_i^{1-m}(1+y_i)$ (resp.\ $C_{i,0} = x_i^{-1} P_{I_0}'(y_i)P_{I_0}(-1)y_i^{1-m}(1+y_i)$).
We have dropped the factor $\eta$ of \cite[1.10]{Wald}, because as remarked above, the transfer factors do not depend on the chosen splitting.
Observe also that the factors computed by Waldspurger are really the factors $\Delta_0/\Delta_{\mathrm{IV}}$ of \cite[5.3]{KS}, but the $\epsilon$ factor is trivial so they are complete.

Waldspurger shows that
\[ \Delta_{\widetilde{\GL}_{2m},\SO_{2m}}((g_1,h_0),(g_1,g_0)) = \prod_{i \in I^*} \mathrm{sign}_{K_i/K_{\pm i}}(C_i) \]
where $\mathrm{sign}_{K_i/K_{\pm i}}$ is the nontrivial character of $K_{\pm i}^{\times}/N_{K_i/K_{\pm i}} K_i^{\times}$.
We are left to show that $\prod_{i \in I^*} \mathrm{sign}_{K_i/K_{\pm i}}(C_i/C_{i,0})=1$.
\begin{eqnarray*}
C_i/C_{i,0} & = & y_i^{N_0/2-m} \prod_{j \in I_1} \prod_{\phi \in \Phi_j} (y_i - \phi(y_j))(-1-\phi(y_j)) \\
& = & \prod_{j \in I_1} \prod_{\phi \in \Phi_{\pm j}} y_i^{-1} \left(y_i + \phi(x_{j,1})\right)\left(y_i + \phi(x_{j,1})^{-1}\right)\left(\phi(x_{j,1}) - 1\right)\left(\phi(x_{j,1})^{-1} -1\right) \\
& = & (-1)^{m-N_0/2} N_{K_i/K_{\pm i}}\left( \prod_{j \in I_1} \prod_{\phi \in \Phi_{\pm j}} (y_i+\phi(x_{j,1})) (\phi(x_{j,1})^{-1}-1) \right)
\end{eqnarray*}
where $\Phi_{\pm j}$ is the set of $K$-morphisms $K_{\pm j} \rightarrow \bar{K}$.
Thus
\begin{eqnarray*}
\prod_{i \in I^*} \mathrm{sign}_{K_i/K_{\pm i}}(C_i/C_{i,0}) & = & \prod_{i \in I^*} \mathrm{sign}_{K_i/K_{\pm i}}|_{K^{\times}} \left( (-1)^{m-N_0/2} \right) \\
& = & 1
\end{eqnarray*}
since $\prod_{i \in I^*} \mathrm{sign}_{K_i/K_{\pm i}}|_{K^{\times}}$ is easily checked to be equal to the Hilbert symbol with the discriminant of our special orthogonal group, which is $1$ (this is the condition for $g_0$ to have a norm in the special orthogonal group).
\end{proof}

\begin{assu}
\label{lemm:transfergl2so}
Let $F_0$ be a totally real field, and let $\pi$ be a regular, L-algebraic, self-dual, cuspidal representation of $\GL_{2n+1}(\A_{F_0})$.
Assume that for any place $v|p$ of $F_0$, $\pi_v$ has vectors fixed under the Iwahori.
Then there exists a RLASDC representation $\pi_0$ of $\GL_1(\A_{F_0})$ if $n$ is odd (resp. $\GL_3(\A_{F_0})$ if $n$ is even), a totally real extension $F/F_0$ which is trivial or quadratic, and an automorphic representation $\Pi$ of $\Galg(\A_F)$ such that
\begin{enumerate}
\item For any place $v|p$ of $F_0$, $\pi_{0,v}$ is unramified.
\item $\BC_{F/F_0}(\pi)$ and $\BC_{F/F_0}(\pi_0)$ remain cuspidal.
\item For any place $v$ of $F$ above $p$, $\Pi_v$ has invariants under the action of the Iwahori subgroup of $\Galg(F_v)$.
\item For any finite place $v$ of $F$ such that $\BC_{F/F_0}(\pi)_v$ and $\BC_{F/F_0}(\pi_0)_v$ are unramified, $\Pi_v$ is unramified, and via the inclusion $\SO_{2n+2}(\C) \hookrightarrow \GL_{2n+2}(\C)$ (resp. $\SO_{2n+4}(\C) \hookrightarrow \GL_{2n+2}(\C)$), the Satake parameter of $\Pi_v$ is equal to the direct sum of those of $\BC_{F/F_0}(\pi)_v$ and $\BC_{F/F_0}(\pi_0)_v$.
\end{enumerate}
\end{assu}

This is very similar to Assumption \ref{lemm:transfergl2sp}.
In fact in this case the group $S_{\Psi}$ is trivial, which explains why it is enough to take a quadratic extension of $F_0$.
This is necessary to be able to define the group $\Galg$.
The crucial observation is that the local Langlands parameters of $\BC_{F/F_0}(\pi) \oplus \BC_{F/F_0}(\pi_0)$ at the infinite places correspond to parameters for the compact groups $\SO_{2n+2}/\R$ (resp. $\SO_{2n+4}$).
These parameters are of the form
\[ \epsilon^n \oplus \bigoplus_{i=1}^n \Ind_{W_{\R}}^{W_{\C}} \left( z \mapsto (z/\bar{z})^{r_i} \right) \]
($r_1>\ldots>r_n>0$) for $\BC_{F/F_0}(\pi)$, and
\[ \begin{cases}
1 & \mbox{if $n$ is odd} \\
\epsilon \oplus \Ind_{W_{\R}}^{W_{\C}} \left( z \mapsto (z/\bar{z})^{r} \right) & \mbox{if $n$ is even}
\end{cases} \]
so that the direct sum of the two is always of the form
\[ 1 \oplus \epsilon \oplus \bigoplus_{i=1}^{k-1} \Ind_{W_{\R}}^{W_{\C}} \left( z \mapsto (z/\bar{z})^{r_i} \right) \]
for distinct, positive $r_i$.
This is the Langlands parameter corresponding to the representation of $\SO_{2k}(\R)$ having highest weight $\sum_{i=1}^{k} (r_i-(k-i)) e_i$ with $r_k=0$, where the root system consists of the $\pm e_i \pm e_j$ ($i \neq j$) and the simple roots are $e_1 - e_2, \ldots , e_{k-1} - e_k, e_{k-1}+e_k$.

Note that, contrary to the symplectic case, there is one outer automorphism of the even orthogonal group, and so there may be two choices for the Satake parameters of $\Pi_v$, mapping to the same conjugacy class in the general linear group.
Fortunately we only need the existence.

\begin{prop}
\label{prop:iwahoriorth}
Let $F$ be a totally real field, and let $\pi$ be an L-algebraic, self-dual, cuspidal representation of $\GL_{2n+1}(\A_F)$.
Suppose that for any place $v$ of $F$ above $p$, $\pi_v$ has invariants under an Iwahori.
Then for any complex conjugation $c \in G_F$, $\Tr(\rho_{\iota_p,\iota_{\infty}}(\pi)(c))=\pm 1$.
\end{prop}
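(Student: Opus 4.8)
The plan is to run the argument of Proposition \ref{prop:iwahorisympl}, but with the even orthogonal group $\Galg$ of Section \ref{section:orth} in place of the symplectic group and Theorem \ref{theo:mainresorth} in place of Theorem \ref{theo:mainressympl}, and then to conclude by feeding the resulting even-dimensional, globally irreducible Galois representation back into Proposition \ref{prop:iwahorisympl}.

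First I would place ourselves in the situation of Assumption \ref{lemm:transfergl2so}. Applying it with $F_0=F$ produces a totally real extension $F'/F$ (trivial or quadratic), a RLASDC representation $\pi_0$ of $\GL_1(\A_{F'})$ if $n$ is odd and of $\GL_3(\A_{F'})$ if $n$ is even, and an automorphic representation $\Pi$ of $\Galg(\A_{F'})$ — where $\Galg$ has dimension $2n+2$ (resp.\ $2n+4$) according as $n$ is odd (resp.\ even) — which has Iwahori-invariant vectors at all places above $p$ and whose Satake parameters at the finite unramified places are the direct sums of those of $\BC_{F'/F}(\pi)$ and of $\pi_0$. Since $F'$ is totally real, every complex conjugation of $G_F$ is conjugate to one lying in $G_{F'}$, and $\rho_{\iota_p,\iota_{\infty}}(\pi)|_{G_{F'}}=\rho_{\iota_p,\iota_{\infty}}(\BC_{F'/F}(\pi))$, so it suffices to prove the statement over $F'$; I relabel and assume $F'=F$, writing $\pi$ for $\BC_{F'/F}(\pi)$. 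The representation $\Pi$ then defines at least one classical point of the eigenvariety attached to $\Galg$ and to an open subgroup $U$ under which $\Pi$ is invariant — this holds even though the $\GL_{2n+2}$- (resp.\ $\GL_{2n+4}$-) parameter of $\Pi$ is \emph{not} regular, the Hodge-Tate weight $0$ occurring twice, because the corresponding weight of the even orthogonal group is regular (a single ``$k=0$'' slot, contributing $\triv\oplus\epsilon$). By the \v{C}ebotarev density theorem, the compatibility of the transfer at the unramified places, and the continuity of the Galois pseudocharacter on the eigenvariety, $\rho_{\iota_p,\iota_{\infty}}(\Pi)=\rho_{\iota_p,\iota_{\infty}}(\pi)\oplus\rho_{\iota_p,\iota_{\infty}}(\pi_0)$, a semisimple representation of dimension $2n+2$ (resp.\ $2n+4$).

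Next, fixing $N$ with $p^N>2n+4$, I would apply Theorem \ref{theo:mainresorth} to $\Pi$: it yields an automorphic representation $\Pi'$ of $\Galg(\A_F)$, unramified above $p$ and $U$-invariant, with $\rho_{\iota_p,\iota_{\infty}}(\Pi')|_{G_{F_v}}$ irreducible for every $v\mid p$ and $\Tr\rho_{\iota_p,\iota_{\infty}}(\Pi')(g)\equiv\Tr\rho_{\iota_p,\iota_{\infty}}(\Pi)(g)\bmod p^N$ for all $g\in G_F$; the deformation underlying Theorem \ref{theo:mainresorth} moves to a very regular weight, so the crystalline admissibility argument of Theorem \ref{theo:mainressympl} applies at the places above $p$ and $\Pi'$ lies in the class of Arthur parameters trivial on $\SL_2(\C)$. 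Being irreducible after restriction to a single decomposition group, $\rho_{\iota_p,\iota_{\infty}}(\Pi')$ is globally irreducible; and by the orthogonal analogue of the construction in the proof of Corollary \ref{coro:galrepsp} it is a direct sum of Galois representations attached to cuspidal self-dual representations of general linear groups of total dimension $2n+2$ (resp.\ $2n+4$). Hence $\rho_{\iota_p,\iota_{\infty}}(\Pi')=\rho_{\iota_p,\iota_{\infty}}(\pi')$ for a single regular, L-algebraic, self-dual, cuspidal representation $\pi'$ of $\GL_{2n+2}(\A_F)$ (resp.\ $\GL_{2n+4}(\A_F)$), which is unramified above $p$ and therefore has Iwahori-invariant vectors there.

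Finally I would invoke Proposition \ref{prop:iwahorisympl} for $\pi'$ — a regular, L-algebraic, self-dual, cuspidal representation of a general linear group of even rank with Iwahori-invariant vectors above $p$ — to get $\Tr\rho_{\iota_p,\iota_{\infty}}(\pi')(c)=0$ for every complex conjugation $c$. Then $\Tr\rho_{\iota_p,\iota_{\infty}}(\Pi)(c)\equiv 0\bmod p^N$, and since $|\Tr\rho_{\iota_p,\iota_{\infty}}(\Pi)(c)|\le 2n+4<p^N$ we obtain $\Tr\rho_{\iota_p,\iota_{\infty}}(\pi)(c)+\Tr\rho_{\iota_p,\iota_{\infty}}(\pi_0)(c)=0$. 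If $n$ is odd, $\pi_0$ is the trivial character, so $\Tr\rho_{\iota_p,\iota_{\infty}}(\pi_0)(c)=1$ and $\Tr\rho_{\iota_p,\iota_{\infty}}(\pi)(c)=-1$; if $n$ is even, $\rho_{\iota_p,\iota_{\infty}}(\pi_0)$ is $3$-dimensional and irreducible (by \cite{BlaRog}, as in the proof of Proposition \ref{prop:iwahorisympl}), so Taylor's theorem gives $\Tr\rho_{\iota_p,\iota_{\infty}}(\pi_0)(c)=\pm 1$ and hence $\Tr\rho_{\iota_p,\iota_{\infty}}(\pi)(c)=\mp 1$. In all cases $|\Tr\rho_{\iota_p,\iota_{\infty}}(\pi)(c)|=1$. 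The step I expect to need the most care is the identification $\rho_{\iota_p,\iota_{\infty}}(\Pi)=\rho_{\iota_p,\iota_{\infty}}(\pi)\oplus\rho_{\iota_p,\iota_{\infty}}(\pi_0)$ together with the applicability of Theorem \ref{theo:mainresorth} at the non-generic starting point $\Pi$: the double zero Hodge-Tate weight — which is exactly what obstructs attacking $\GL_{2n+1}$ directly, or transferring to an odd special orthogonal group — is harmless here only because it is absorbed into a single slot of the standard parameter of the even orthogonal group, away from which the weight can then be deformed.
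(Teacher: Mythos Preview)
Your proposal is correct and follows essentially the same route as the paper's proof: transfer $\pi\oplus\pi_0$ to $\Galg$ via Assumption \ref{lemm:transfergl2so}, deform on the orthogonal eigenvariety via Theorem \ref{theo:mainresorth} to obtain an irreducible $\rho_{\iota_p,\iota_{\infty}}(\Pi')=\rho_{\iota_p,\iota_{\infty}}(\pi')$ with $\pi'$ cuspidal of even rank, and then apply Proposition \ref{prop:iwahorisympl} to $\pi'$. Two cosmetic points: in Assumption \ref{lemm:transfergl2so} the auxiliary $\pi_0$ lives over $F_0$ (not $F'$) and one then base-changes it; and your final worry about the double zero Hodge--Tate weight is not an actual obstruction, since Theorem \ref{theo:mainresorth} only requires $\Pi$ to have Iwahori invariants at $p$, the deformation then moving to a very regular weight where the admissibility argument is run.
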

\begin{proof}
The proof is similar to that of Proposition \ref{prop:iwahorisympl}.
We use the previous assumption to be able to assume (after base change) that there is a representation $\pi_0$ (of $\GL_1(\A_{F})$ if $n$ is odd, $\GL_3(\A_{F})$ if $n$ is even) such that $\pi \oplus \pi_0$ transfers to an automorphic representation $\Pi$ of $\Galg(\A_F)$, with compatibility at the unramified places.
The representation $\Pi$ has Iwahori-invariants at the $p$-adic places of $F$, and thus it defines a point of the eigenvariety $\Xscr$ associated with $\Galg$ (and an idempotent defined by an open subgroup of $\Galg(\A_{F,f}^{(p)})$).
By Theorem \ref{theo:mainresorth}, $\Pi$ is congruent (at all the complex conjugations, and modulo arbirarily big powers of $p$) to another automorphic representation $\Pi'$ of $\Galg$, and $\rho_{\iota_p,\iota_{\infty}}(\Pi')$ is irreducible.
Hence $\rho_{\iota_p,\iota_{\infty}}(\Pi') = \rho_{\iota_p,\iota_{\infty}}(\pi')$ for some RLASDC $\pi'$ of $\GL_{2k}(\A_F)$, which is unramified at all the $p$-adic places of $F$, and we can apply Proposition \ref{prop:iwahorisympl} to $\pi'$.
This proves that $\Tr(\rho_{\iota_p,\iota_{\infty}}(\pi)(c) = - \Tr(\rho_{\iota_p,\iota_{\infty}}(\pi_0)(c) = \pm 1$.
\end{proof}

\subsection{Almost general case}

We will now remove the hypothesis of being Iwahori-spherical at $p$, and allow more general similitude characters, using Arthur and Clozel's base change.

\begin{lemm}
Let $E$ be a number field, $S$ a finite set of (possibly infinite) places of $E$, and for each $v \in S$, let $K^{(v)}$ be a finite abelian extension of $E_v$.
There is an abelian extension $F$ of $E$ such that for any $v \in S$ and any place $w$ of $F$ above $v$, the extension $F_v/E_v$ is isomorphic to $K^{(v)}/E_v$.
\end{lemm}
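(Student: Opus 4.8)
The plan is to translate the statement into class field theory and reduce it to a one-place problem. Write $C_E = \A_E^{\times}/E^{\times}$ for the idèle class group. By global class field theory, finite abelian extensions of $E$ correspond to open finite-index subgroups $H \subseteq C_E$, and for the corresponding field $F$ and a place $w \mid v$ of $F$, the Galois group $\Gal(F_w/E_v)$ is the image of $E_v^{\times} \hookrightarrow C_E$ in $C_E/H$, while $F_w/E_v$ itself is the abelian extension attached by \emph{local} class field theory to the subgroup $E_v^{\times} \cap H$ of $E_v^{\times}$. So, letting $U_v := N_{K^{(v)}/E_v}\big((K^{(v)})^{\times}\big) \subseteq E_v^{\times}$ be the norm subgroup of $K^{(v)}$ — an open subgroup of index $[K^{(v)}:E_v]$ with $E_v^{\times}/U_v \cong \Gal(K^{(v)}/E_v)$ — it suffices to find an open finite-index subgroup $H \subseteq C_E$ with $E_v^{\times} \cap H = U_v$ for every $v \in S$.

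Next I would make two reductions. First, it is enough to treat the case $\#S = 1$: if for each fixed $v \in S$ one can produce a finite abelian extension $F^{(v)}/E$ whose completion above $v$ is isomorphic to $K^{(v)}/E_v$ and which splits completely at all the other places of $S$, then the compositum $F := \prod_{v \in S} F^{(v)}$ works, since at $v \in S$ the completions of the $F^{(v')}$ with $v' \neq v$ are trivial, whence $F_w \cong K^{(v)}$. Second, decomposing the finite abelian group $\Gal(K^{(v)}/E_v)$ as a product of cyclic groups of prime-power order and taking compositums of the corresponding subextensions, I may assume moreover that $K^{(v)}/E_v$ is cyclic of prime-power degree. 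The statement is thus reduced to the following: given $E$, a finite set $S$, a place $v_0 \in S$, and a cyclic extension $K/E_{v_0}$ of degree $\ell^a$, there is a finite abelian $F/E$ with $F_w \cong K/E_{v_0}$ for $w \mid v_0$ and $F$ split completely at every $v \in S \setminus \{v_0\}$.

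This last assertion is a case of the Grunwald–Wang theorem, and I would prove it along the classical lines: choose a modulus $\mathfrak{m}$ of $E$ divisible by a high power of $\mathfrak{p}_{v_0}$ (large enough that $1 + \mathfrak{p}_{v_0}^{\,k} \subseteq U$, where $U \subseteq E_{v_0}^{\times}$ is the norm group of $K$), by all archimedean places, and by an auxiliary finite place; working inside the ray class field of conductor $\mathfrak{m}$ one uses the surjectivity of the local reciprocity map at $v_0$, together with the freedom to allow extra ramification at the auxiliary place in order to absorb the contribution of the units and of the class group (this contribution being exactly what keeps the completion at $v_0$ from being what one wants), and then cuts down to the subextension on which the decomposition group is trivial at every $v \in S \setminus \{v_0\}$ and equals $E_{v_0}^{\times}/U \cong \Gal(K/E_{v_0})$ at $v_0$. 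The one genuine subtlety — and the step I expect to be the main obstacle — is the \emph{special case} of Grunwald–Wang, which occurs when $\ell = 2$, $8 \mid \ell^a$, and $v_0$ lies among the finitely many $2$-adic places where $E$ is ``special''; there one cannot in general realize $K$ as the completion of a \emph{cyclic} degree-$2^a$ extension of $E$. This is harmless here, however, because the Lemma asks only for an \emph{abelian} $F$ and imposes no condition on $[F:E]$, so it suffices to realize $K$ inside a suitably larger abelian extension, which the theorem still guarantees. (In the applications of this Lemma in the paper the extensions $K^{(v)}$ are unramified or quadratic, so the special case does not even arise.)
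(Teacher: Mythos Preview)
Your approach is correct and ultimately reduces the statement to the same circle of results as the paper, but the routes differ in presentation. The paper's proof is a single sentence: translate via class field theory and cite Chevalley's theorem directly. You instead unwind things further --- reducing to a single place (with split conditions elsewhere), then to $K^{(v_0)}$ cyclic of prime-power order, and finally invoking Grunwald--Wang with a sketch of its proof.

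A few remarks on your detour. First, the reduction to $\#S=1$ is not really a simplification: the problem ``completion $K^{(v_0)}$ at $v_0$ and split at $S\setminus\{v_0\}$'' is itself an instance of the lemma with $|S|$ local conditions, so you have merely restructured rather than reduced. Second, the further reduction to cyclic prime-power $K$ is unnecessary: the \emph{abelian} form of Grunwald--Wang (equivalently, surjectivity of $\Gal(E^{\mathrm{ab}}/E)\to\prod_{v\in S}\Gal(E_v^{\mathrm{ab}}/E_v)$) already yields the full statement and carries no special-case obstruction, since no constraint on $[F:E]$ or on the structure of $\Gal(F/E)$ is imposed. This is precisely the content of the theorem the paper cites. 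Third, your phrase ``realize $K$ inside a suitably larger abelian extension'' is ambiguous: you need $F_w=K$ exactly, not $K\subset F_w$; what you mean (and what is true) is that $K$ can be realized as the completion of an abelian $F/E$ whose degree may exceed $[K:E_{v_0}]$.

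So: your argument is sound, but the paper's approach is cleaner --- it recognizes the lemma as an immediate consequence of a single classical theorem and avoids the intermediate reductions.
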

\begin{proof}
After translation to local and global class field theory, this is a consequence of \cite[Théorème 1]{Chevalley}.
\end{proof}

Before proving the last theorem, we need to reformulate the statement, in order to make the induction argument more natural.
Let $\pi$ be a regular, L-algebraic, cuspidal representation of $\GL_{2n+1}(\A_F)$.
At a real place $v$ of $F$, the Langlands parameter of $\pi_v$ is of the form
\[ \epsilon^e \oplus \bigoplus_i \Ind_{W_{\R}}^{W_{\C}} z \mapsto (z/\bar{z})^{n_i} \]
and according to the recipe given in \cite[Lemma 2.3.2]{BuzGee}, $\rho_{\iota_p,\iota_{\infty}}(\pi)(c_v)$ should be in the same conjugacy class as
\[ \begin{pmatrix}
(-1)^e &  &  &  & &  \\
 & 0 & 1 &  &  &  \\
 & 1 & 0 &  &  &  \\
 & & & \ddots & &  \\
 & & & & 0 & 1 \\
 & & & & 1 & 0
\end{pmatrix} \]
Since it is known that $\det \rho_{\iota_p,\iota_{\infty}}(\pi)(c_v) = (-1)^{e+n}$, $\rho_{\iota_p,\iota_{\infty}}(\pi)(c_v) \sim \mathcal{LL}(\pi_v)(j)$ if and only if $|\Tr \rho_{\iota_p,\iota_{\infty}}(\pi)(c_v)|=1$.
Similarly, in the even-dimensional case, $\rho_{\iota_p,\iota_{\infty}}(\pi)(c_v) \sim \mathcal{LL}(\pi_v)(j)$ if and only if $\Tr \rho_{\iota_p,\iota_{\infty}}(\pi)(c_v)=0$.

\begin{theo}
\label{theo:mainthmcc}
Let $n \geq 2$, $F$ a totally real number field, $\pi$ a regular, L-algebraic, essentially self-dual, cuspidal representation of $\GL_n(\A_F)$, such that $\pi^{\vee} \simeq \left((\eta |\cdot|^q) \circ \det \right) \otimes \pi$, where $\eta$ is an Artin character.
Suppose that one of the following conditions holds
\begin{enumerate}
\item $n$ is odd.
\item $n$ is even, $q$ is even, and $\eta_{\infty}(-1)=1$.
\end{enumerate}
Then for any complex conjugation $c \in G_F$, $|\Tr(\rho_{\iota_p,\iota_{\infty}}(\pi)(c))| \leq 1$.
\end{theo}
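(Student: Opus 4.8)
\emph{Proof plan.} The idea is to reduce, by twisting and by solvable totally real base change, to the Iwahori-spherical self-dual situations already settled in Propositions \ref{prop:iwahorisympl} (for even $n$) and \ref{prop:iwahoriorth} (for odd $n$), using the reformulation recorded just before the statement.

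First I would note that the assertion to be proved is stable under the two operations we are allowed to perform. If $\chi$ is an algebraic Hecke character of $\A_F^{\times}/F^{\times}$ then $\rho_{\iota_p,\iota_{\infty}}(\pi\otimes(\chi\circ\det)) = \rho_{\iota_p,\iota_{\infty}}(\pi)\otimes(\chi\circ\mathrm{rec})$ while $\mathcal{LL}((\pi\otimes(\chi\circ\det))_v) = \mathcal{LL}(\pi_v)\otimes\chi_v$, so, by the reformulation (that $|\Tr(\rho_{\iota_p,\iota_{\infty}}(\pi)(c_v))|\le 1$, resp.\ $=0$, is equivalent to $\rho_{\iota_p,\iota_{\infty}}(\pi)(c_v)\sim\mathcal{LL}(\pi_v)(j)$), the statement for $\pi\otimes(\chi\circ\det)$ is equivalent to the one for $\pi$. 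If $F'/F$ is a finite totally real extension then $\rho_{\iota_p,\iota_{\infty}}(\BC_{F'/F}(\pi)) = \rho_{\iota_p,\iota_{\infty}}(\pi)|_{G_{F'}}$, every complex conjugation of $G_F$ is $G_F$-conjugate to one lying in $G_{F'}$ (pick a real place of $F'$ above the given one), and the archimedean parameters restrict compatibly; so it suffices to prove the bound after a solvable totally real base change, provided cuspidality is preserved — which I would arrange by always imposing ramification of the auxiliary extensions at a finite place where $\pi$ is unramified, so that no nontrivial character of the (abelian) Galois group can fix $\pi$.

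Next I would reduce to $\pi$ self-dual (still regular, L-algebraic, cuspidal on $\GL_n$). In case (1), $n$ odd: the relation $\pi^{\vee}\simeq((\eta|\cdot|^q)\circ\det)\otimes\pi$ forces $\omega_{\pi}^2(\eta|\cdot|^q)^{n}=1$, so $\mu:=\omega_{\pi}(\eta|\cdot|^q)^{(n+1)/2}$ is an algebraic Hecke character with $\mu^2=\eta|\cdot|^q$ (here $(n+1)/2\in\Z$ and $\omega_{\pi}$ is algebraic), and $\pi\otimes(\mu\circ\det)$ is self-dual. In case (2), $n$ even, $q$ even, $\eta_{\infty}(-1)=1$: twisting by $|\det|^{q/2}$ (legitimate since $q$ is even) reduces to $\pi^{\vee}\simeq(\eta\circ\det)\otimes\pi$ with $\eta$ an Artin character satisfying $\eta_v(-1)=1$ at all real $v$; the fixed field of $\eta\circ\mathrm{rec}$ is then totally real, and after a solvable totally real base change (arranged cuspidal as above) we may take $\eta=1$, i.e.\ $\pi$ self-dual. (The even cases with $q$ odd are excluded precisely because no algebraic square root of $\eta|\cdot|^q$ exists and base change cannot change the parity of $q$.) Then I would remove the ramification at $p$: the local parameter of $\pi_v$, $v\mid p$, has finite image on inertia, hence becomes unramified over a finite abelian extension of $F_v$; realizing these prescribed local extensions simultaneously by one global abelian extension via the lemma above, taken totally real and ramified at an auxiliary place (for cuspidality), and enlarging by a quadratic layer if necessary to make $[F:\Q]$ even, I reach a regular, L-algebraic, self-dual, cuspidal $\pi$ on $\GL_n(\A_F)$ with $\pi_v$ Iwahori-spherical for all $v\mid p$.

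At this point Proposition \ref{prop:iwahorisympl} (if $n$ is even) or Proposition \ref{prop:iwahoriorth} (if $n$ is odd) applies directly and gives $\Tr(\rho_{\iota_p,\iota_{\infty}}(\pi)(c))=0$, resp.\ $\pm1$, for every complex conjugation $c$; undoing the twists and base changes via the first paragraph yields $|\Tr(\rho_{\iota_p,\iota_{\infty}}(\pi)(c))|\le 1$ for the original $\pi$. The main obstacle is the bookkeeping in the reduction steps: one must simultaneously keep all intermediate fields totally real, realize the prescribed local abelian extensions above $p$, extract the square root of the similitude character, and — most delicately — preserve cuspidality through each abelian layer of base change; it is exactly here that the hypotheses ``$n$ odd'' or ``$q$ even and $\eta_{\infty}(-1)=1$'' and the Chevalley-type lemma above are used.
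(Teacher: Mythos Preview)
Your overall plan matches the paper's: twist to simplify the similitude character, then perform solvable totally real base change to reach the self-dual Iwahori-spherical situation where Propositions \ref{prop:iwahorisympl} and \ref{prop:iwahoriorth} apply. Two points deserve correction.

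First, a minor slip: the image of inertia under $\mathcal{LL}(\pi_v)$ for $v\mid p$ is a finite \emph{solvable} group, not an abelian one in general, so the local extension trivialising it need not be abelian and one cannot realise it in a single global abelian layer. The paper fixes this exactly as one expects: iterate cyclic steps, invoking the Chevalley-type lemma at each stage, to produce a totally real \emph{solvable} extension $F''/F'$.

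Second, and this is the genuine gap, your device for preserving cuspidality (``impose ramification of the auxiliary extension at a place where $\pi$ is unramified'') does not apply to the step that kills $\eta$ in the even case. That extension is the cyclic field $F'/F$ cut out by $\eta$; it is fixed, and you cannot add ramification to it. Concretely, if $\eta$ has order $4$ and $\pi\simeq\pi\otimes\eta^{2}$ while $\pi\not\simeq\pi\otimes\eta$, then $\pi$ is not self-dual (so the step cannot be skipped) yet $\BC_{F_2/F}(\pi)$ is already non-cuspidal at the quadratic sublayer $F_2\subset F'$; and this obstruction persists after any preliminary base change, since $\pi\simeq\pi\otimes\eta^{2}$ implies $\BC(\pi)\simeq\BC(\pi)\otimes(\eta^{2}\!\circ\! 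N)$. The paper does not try to keep cuspidality. It lets $\BC_{F''/F}(\pi)=\pi_1\boxplus\cdots\boxplus\pi_k$ and argues that each $\pi_i$ is itself self-dual: self-duality of the sum pairs each $\pi_i$ with some $\pi_j=\pi_i^{\vee}$, but every archimedean constituent $\Ind_{W_\C}^{W_\R}(z/\bar z)^{r}$ (and $\epsilon^e$) is self-dual, so $\mathcal{LL}(\pi_{j,v})=\mathcal{LL}(\pi_{i,v})$, which contradicts the disjointness of their weight sets inside the \emph{regular} parameter of $\BC(\pi)_v$ unless $i=j$. One then applies the two Propositions to each cuspidal $\pi_i$ according to the parity of its degree and sums the traces. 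Inserting this decomposition argument in place of your cuspidality-preservation claim closes the gap.
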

\begin{proof}
We can twist $\pi$ by an algebraic character, thus multiplying the similitude character $\eta |\cdot|^q$ by the square of an algebraic character.
If $n$ is odd, this allows to assume $\eta=1,q=0$ (by comparing central characters, we see that $\eta |\cdot|^q$ is a square).
If $n$ is even, we can assume  that $q=0$ (we could also assume that the order of $\eta$ is a power of $2$, but this is not helpful).
The Artin character $\eta$ defines a cyclic, totally real extension $F'/F$.
Since local Galois groups are pro-solvable, the preceding lemma shows that there is a totally real, solvable extension $F''/F'$ such that $\BC_{F''/F}(\pi)$ has Iwahori invariants at all the places of $F''$ above $p$.
In general $\BC_{F''/F}(\pi)$ is not cuspidal, but only induced by cuspidals: $\BC_{F''/F}(\pi) = \pi_1 \boxplus \ldots \boxplus \pi_k$.
However it is self-dual, and the particular form of the Langlands parameters at the infinite places imposes that all $\pi_i$ be self-dual.
We can then apply Propositions \ref{prop:iwahorisympl} and \ref{prop:iwahoriorth} to the $\pi_i$, and conclude by induction that for any complex conjugation $c \in G_F$, the conjugacy class of $\rho_{\iota_p,\iota_{\infty}}(\pi)(c)$ is given by the recipe found in \cite[Lemma 2.3.2]{BuzGee}, that is to say $\left| \Tr \rho_{\iota_p,\iota_{\infty}}(\pi)(c) \right| \leq 1$.
\end{proof}

\begin{rema}
The case $n$ even, $\eta_{\infty}(-1)=(-1)^{q+1}$ is trivial.
The case $n$ even, $q$ odd and $\eta_{\infty}(-1)=-1$ remains open.
\end{rema}

For the sake of clarity, we state the theorem using the more common normalization of C-algebraic representations.

\begin{theo}
\label{theo:lastthm}
Let $n \geq 2$, $F$ a totally real number field, $\pi$ a regular, algebraic, essentially self-dual, cuspidal representation of $\GL_n(\A_F)$, such that $\pi^{\vee} \simeq \eta |\det|^q \pi$, where $\eta$ is an Artin character.
Suppose that one of the following conditions holds
\begin{enumerate}
\item $n$ is odd.
\item $n$ is even, $q$ is odd, and $\eta_{\infty}(-1)=1$.
\end{enumerate}
Then for any complex conjugation $c \in G_F$, $|\Tr(r_{\iota_p,\iota_{\infty}}(\pi)(c))| \leq 1$.
\end{theo}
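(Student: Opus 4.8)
The plan is to deduce Theorem~\ref{theo:lastthm} from Theorem~\ref{theo:mainthmcc} by unwinding the dictionary between the C-algebraic normalization used in this statement and the L-algebraic normalization used there. Given $\pi$ as in the statement — regular, algebraic, essentially self-dual and cuspidal, with $\pi^{\vee}\simeq\eta|\det|^q\pi$ — I would set $\widetilde{\pi}:=\pi\otimes|\det|^{-(n-1)/2}$. By the definition of ``L-algebraic'' recalled in the introduction, $\pi$ is RAESDC if and only if $\widetilde{\pi}$ is RLAESDC: the shift by $|\det|^{-(n-1)/2}$ moves the infinity types back into $\Z$ (for $n$ even this twisting character is genuinely half-integral, which is harmless at the automorphic level), and regularity, essential self-duality and cuspidality are manifestly preserved by an abelian twist. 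Thus $\widetilde{\pi}$ is a legitimate input for Theorem~\ref{theo:mainthmcc}.

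Next I would track the self-duality datum. Writing $\pi=\widetilde{\pi}\otimes|\det|^{(n-1)/2}$ and dualizing gives $\widetilde{\pi}^{\vee}\simeq\eta\,|\det|^{\widetilde q}\,\widetilde{\pi}$ with $\widetilde q=q+n-1$ and with the \emph{same} Artin character $\eta$; in particular $\eta_{\infty}(-1)$ is unchanged. If $n$ is odd, then $\widetilde{\pi}$ satisfies hypothesis~(1) of Theorem~\ref{theo:mainthmcc} and nothing more is needed. If $n$ is even, then $n-1$ is odd, so $\widetilde q$ has parity opposite to that of $q$; hence the hypotheses ``$q$ odd and $\eta_{\infty}(-1)=1$'' of Theorem~\ref{theo:lastthm} translate into ``$\widetilde q$ even and $\eta_{\infty}(-1)=1$'', which is precisely hypothesis~(2) of Theorem~\ref{theo:mainthmcc} for $\widetilde{\pi}$. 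In either case Theorem~\ref{theo:mainthmcc} yields $|\Tr(\rho_{\iota_p,\iota_{\infty}}(\widetilde{\pi})(c))|\leq 1$ for every complex conjugation $c\in G_F$.

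It remains to pass back from $\rho_{\iota_p,\iota_{\infty}}(\widetilde{\pi})$ to $r_{\iota_p,\iota_{\infty}}(\pi)$. By the way the Galois representation is attached to a C-algebraic representation, $r_{\iota_p,\iota_{\infty}}(\pi)$ and $\rho_{\iota_p,\iota_{\infty}}(\pi\otimes|\det|^{-(n-1)/2})=\rho_{\iota_p,\iota_{\infty}}(\widetilde{\pi})$ coincide up to a twist by an integral power of the cyclotomic character; since $\mathrm{cyclo}(c)=-1$, evaluating that twist at a complex conjugation $c$ multiplies the matrix by a scalar $\pm1$, leaving $|\Tr(\cdot(c))|$ unchanged. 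Therefore $|\Tr(r_{\iota_p,\iota_{\infty}}(\pi)(c))|=|\Tr(\rho_{\iota_p,\iota_{\infty}}(\widetilde{\pi})(c))|\leq 1$, which is the assertion. There is no substantial obstacle: everything of substance sits in Theorem~\ref{theo:mainthmcc}, and the only point deserving care is the normalization bookkeeping in the $n$ even case, where one must check that the twist relating $r_{\iota_p,\iota_{\infty}}(\pi)$ and $\rho_{\iota_p,\iota_{\infty}}(\widetilde{\pi})$ is by an \emph{integer} (not half-integer) power of $\mathrm{cyclo}$, so that its value at $c$ is merely a sign.
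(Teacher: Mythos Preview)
Your argument is correct and is essentially the paper's own proof, which consists of the single line ``Apply the previous theorem to $\pi |\det|^{(n-1)/2}$''; you have simply spelled out the bookkeeping (the parity shift $q\mapsto q\pm(n-1)$ and the passage from $r_{\iota_p,\iota_{\infty}}$ to $\rho_{\iota_p,\iota_{\infty}}$) that the paper leaves implicit. Note that your sign in the twist actually matches the paper's stated convention in the introduction better than the sign in the paper's one-line proof, but either sign works since only the parity of $n-1$ matters.
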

\begin{proof}
Apply the previous theorem to $\pi |\det|^{(n-1)/2}$.
\end{proof}

\subsection{A supplementary, non-regular case}

In this subsection $\Galg$ is the orthogonal group of section \ref{section:orth}.

\begin{assu}
\label{lemm:transfernonreg}
Let $F_0$ be a totally real field, and let $\pi$ be an L-algebraic, self-dual, cuspidal representation of $\GL_{2n}(\A_{F_0})$.
Assume that for any place $v|p$ of $F_0$, $\pi_v$ has vectors fixed under the Iwahori, and that for any real place $v$ of $F_0$,
\[ \mathcal{LL}(\pi_v) \simeq \bigoplus_{i=1}^{n} \Ind_{W_{\R}}^{W_{\C}} \left( z \mapsto (z/\bar{z})^{r_i} \right) \]
where $r_n>\ldots>r_1 \geq 0$ are integers (note that $\pi$ is not regular if $r_1=0$).
Then there exists a totally real extension $F/F_0$ which is trivial or quadratic, and an automorphic representation $\Pi$ of $\Galg(\A_F)$ such that
\begin{enumerate}
\item $\BC_{F/F_0}(\pi)$ remains cuspidal.
\item For any place $v$ of $F$ above $p$, $\Pi_v$ has invariants under the action of the Iwahori subgroup of $\Galg(F_v)$.
\item For any finite place $v$ of $F$ such that $\BC_{F/F_0}(\pi)_v$ is unramified, $\Pi_v$ is unramified, and via the inclusion $\SO_{2n+2}(\C) \hookrightarrow \GL_{2n}(\C)$, the Satake parameter of $\Pi_v$ is equal to the one of $\BC_{F/F_0}(\pi)_v$.
\end{enumerate}
\end{assu}
Of course this is very similar to Assumptions \ref{lemm:transfergl2sp} and \ref{lemm:transfergl2so}, and as in the latter case the group $S_{\Psi}$ is trivial.

For L-algebraic, self-dual, cuspidal automorphic representations of $\GL_{2n}$ having ``almost regular'' Langlands parameter at the archimedean places as above, the corresponding $p$-adic Galois representation is known to exist by \cite{Goldring}.
Exactly as in the previous subsection, we have the following:
\begin{theo}
Let $n \geq 2$, $F$ a totally real number field, $\pi$ an L-algebraic, essentially self-dual, cuspidal representation of $\GL_{2n}(\A_F)$, such that $\pi^{\vee} \simeq \eta \pi$, where $\eta$ is an Artin character.
Assume that at any real place $v$ of $F$, $\eta_v(-1)=1$ and
\[ \mathcal{LL}(\pi_v) \simeq \bigoplus_{i=1}^{n} \Ind_{W_{\R}}^{W_{\C}} \left( z \mapsto (z/\bar{z})^{r_i} \right) \]
where $r_n>\ldots>r_1 \geq 0$ are integers.
Then for any complex conjugation $c \in G_F$, $\Tr(\rho_{\iota_p,\iota_{\infty}}(\pi)(c)) = 0$.
\end{theo}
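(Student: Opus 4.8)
The plan is to run the argument exactly parallel to Propositions \ref{prop:iwahorisympl} and \ref{prop:iwahoriorth} and to Theorem \ref{theo:mainthmcc}, the genuinely new inputs being the non-regular transfer of Assumption \ref{lemm:transfernonreg}, the existence of $\rho_{\iota_p,\iota_\infty}(\pi)$ provided by \cite{Goldring}, and an induction on $n$. First I would reduce to the case where $\pi$ is self-dual and Iwahori-spherical at the places above $p$. Since $\eta_v(-1)=1$ at every real place and $\mathcal{LL}(\pi_v)$ has the prescribed self-dual shape, $\eta_v$ is trivial at each infinite place, so $\eta$ cuts out a totally real cyclic extension of $F$; combining this with the lemma preceding Theorem \ref{theo:mainthmcc} (the consequence of \cite[Théorème 1]{Chevalley}), I choose a totally real solvable extension $F''/F$ such that $\BC_{F''/F}(\pi)$ is self-dual and Iwahori-spherical at every place above $p$, the archimedean parameters being unchanged since base change is trivial at real places. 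Every complex conjugation of $G_F$ is conjugate to one in $G_{F''}$ and $\rho_{\iota_p,\iota_\infty}(\pi)|_{G_{F''}}=\rho_{\iota_p,\iota_\infty}(\BC_{F''/F}(\pi))$, so it is enough to prove $\Tr(\rho_{\iota_p,\iota_\infty}(\BC_{F''/F}(\pi))(c))=0$.

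Write $\BC_{F''/F}(\pi)=\pi_1\boxplus\cdots\boxplus\pi_k$ with the $\pi_i$ cuspidal, self-dual, L-algebraic and Iwahori-spherical at $p$, so that $\Tr(\rho_{\iota_p,\iota_\infty}(\BC(\pi))(c))=\sum_i\Tr(\rho_{\iota_p,\iota_\infty}(\pi_i)(c))$. The archimedean parameter of each $\pi_i$ is a sub-$W_{\R}$-representation of $\bigoplus_i\Ind_{W_{\R}}^{W_{\C}}(z\mapsto(z/\bar z)^{r_i})$, so at the real place $w$ of $F''$ fixed by $c$ the constituents fall into three types: even-dimensional and regular (RLASDC), to which Proposition \ref{prop:iwahorisympl} applies and which contribute $0$; odd-dimensional and regular (RLASDC, with $\mathbf{1}$ or $\epsilon$ as a summand at $w$), to which Proposition \ref{prop:iwahoriorth} applies and which, comparing with the known determinant, contribute $\Tr(\mathcal{LL}((\pi_i)_w)(j))=\pm1$; and even-dimensional and non-regular (containing the block $\mathbf{1}\oplus\epsilon=\Ind_{W_{\R}}^{W_{\C}}(\mathbf{1})$), which occur only when $k\geq2$, so that $\dim\pi_i<2n$, and to which the theorem applies in lower dimension. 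Since the summands $\mathbf{1}$ and $\epsilon$ of $\mathcal{LL}(\pi_v)$ either both lie in one (even-dimensional) constituent or split between two odd-dimensional constituents whose contributions are $+1$ and $-1$, all the $\pm1$'s cancel; and the inductive hypothesis (with base case $\dim\pi_i=2$, the classical total oddness of Galois representations attached to cuspidal representations of $\GL_2$ over a totally real field) kills the non-regular contributions. Hence $\sum_i\Tr(\rho_{\iota_p,\iota_\infty}(\pi_i)(c))=0$.

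It remains to treat $k=1$, i.e.\ $\BC_{F''/F}(\pi)$ cuspidal. If its archimedean parameters have $r_1>0$ it is RLASDC and Proposition \ref{prop:iwahorisympl} applies directly. Otherwise, by Assumption \ref{lemm:transfernonreg} — after a further quadratic totally real base change, and, when the parity of $n$ forces it, adjoining an auxiliary regular cuspidal representation of $\GL_2$ of the sort used in Proposition \ref{prop:iwahoriorth} so that the total parameter fits the even orthogonal group $\Galg$ of section \ref{section:orth} — I transfer $\BC_{F''/F}(\pi)$ to an automorphic representation $\Pi$ of $\Galg(\A_F)$, Iwahori-spherical at $p$, with $\rho_{\iota_p,\iota_\infty}(\Pi)=\rho_{\iota_p,\iota_\infty}(\BC(\pi))\oplus(\text{the auxiliary representation})$ by \v{C}ebotarev and compatibility at the unramified places. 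Choosing $N$ with $p^N>2\dim\Pi$, Theorem \ref{theo:mainresorth} yields $\Pi'$ unramified at $p$ with $\rho_{\iota_p,\iota_\infty}(\Pi')|_{G_{F_v}}$ irreducible for all $v\mid p$ — the key point being that the standard representation of an even special orthogonal group is minuscule and the deformed classical point on the eigenvariety has weight far from the walls, so that no Hodge--Tate weight $0$ survives and there is no Artin-character ambiguity — and with $\Tr(\rho_{\iota_p,\iota_\infty}(\Pi')(g))\equiv\Tr(\rho_{\iota_p,\iota_\infty}(\Pi)(g))\bmod p^N$ for all $g\in G_F$. Irreducibility forces $\rho_{\iota_p,\iota_\infty}(\Pi')=\rho_{\iota_p,\iota_\infty}(\pi')$ for one cuspidal $\pi'$, necessarily RLASDC (the weight of $\Pi'$ being very regular) and even-dimensional, so $\Tr(\rho_{\iota_p,\iota_\infty}(\pi')(c))=0$ by Proposition \ref{prop:iwahorisympl}; since the auxiliary $\GL_2$ representation has vanishing trace at $c$, the congruence together with the bound on $N$ gives $\Tr(\rho_{\iota_p,\iota_\infty}(\BC(\pi))(c))=0$, as desired.

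The main obstacle, which is exactly what prevents a direct application of Taylor's theorem, is the non-regularity of $\pi$: the corresponding classical point lies on a wall of the Weyl chamber, so $\rho_{\iota_p,\iota_\infty}(\Pi)$ has a repeated Hodge--Tate weight $0$, and one must run the crystalline-Frobenius deformation of Theorems \ref{theo:mainressympl}--\ref{theo:mainresorth} in the even orthogonal setting (where minusculeness of the standard representation makes the deformed Galois representation genuinely irreducible) and then carefully account for the contributions of all constituents $\pi_i$ that can appear after the solvable base change, including the $\GL_2$ base case.
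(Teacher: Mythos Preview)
Your argument is correct and is exactly what the paper's one-line proof (``Identical to that of Theorem \ref{theo:mainthmcc}'') asks the reader to unfold: solvable base change to kill $\eta$ and obtain Iwahori invariants, decompose into self-dual cuspidals, handle the regular pieces by Propositions \ref{prop:iwahorisympl}--\ref{prop:iwahoriorth}, and for the remaining non-regular even-dimensional piece transfer via Assumption \ref{lemm:transfernonreg} to the compact even orthogonal group and invoke Theorem \ref{theo:mainresorth}. You are in fact more careful than the paper on two points it glosses over: you make the induction on the dimension (with the classical $\GL_2$ base case) explicit, and you patch the parity constraint coming from the fact that the group of section \ref{section:orth} is only $\SO_{4m}$ --- so a non-regular cuspidal of $\GL_{2n}$ with $n$ odd requires an auxiliary regular $\GL_2$ summand --- which Assumption \ref{lemm:transfernonreg} as written (note the dimension typo there) does not address.
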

\begin{proof}
Identical to that of Theorem \ref{theo:mainthmcc}.
\end{proof}

\newpage

\bibliographystyle{amsalpha}
\bibliography{deform}

\providecommand{\bysame}{\leavevmode\hbox to3em{\hrulefill}\thinspace}
\providecommand{\MR}{\relax\ifhmode\unskip\space\fi MR }
\providecommand{\MRhref}[2]{%
  \href{http://www.ams.org/mathscinet-getitem?mr=#1}{#2}
}
\providecommand{\href}[2]{#2}
\begin{thebibliography}{BLGGT}

\bibitem[Art06]{ArthurNote}
James Arthur, \emph{A note on l-packets}, Pure and {A}pplied {M}athematics
  {Q}uarterly \textbf{2} (2006), no.~1, 199--217, Special Issue: In honor of
  John H. Coates.

\bibitem[Art11]{Arthur}
\bysame, \emph{The {E}ndoscopic {C}lassification of {R}epresentations:
  {O}rthogonal and {S}ymplectic groups}, TODO, 2011.

\bibitem[BC11]{BC}
Joël Bellaïche and Gaëtan Chenevier, \emph{The sign of galois
  representations attached to automorphic forms for unitary groups}, Compositio
  Math. \textbf{147} (2011), 1337--1352.

\bibitem[BG10]{BuzGee}
Kevin Buzzard and Toby Gee, \emph{The conjectural connections between
  automorphic representations and {G}alois representations},
  \url{http://arxiv.org/abs/1009.0785v1}, 2010.

\bibitem[BLGGT]{BLGGT}
Thomas Barnet-Lamb, Toby Gee, David Geraghty, and Richard Taylor,
  \emph{Potential automorphy and change of weight},
  \url{http://arxiv.org/abs/1010.2561}.

\bibitem[BM02]{BreMez}
Christophe Breuil and Ariane Mézard, \emph{Multiplicités modulaires et
  représentations de {$\mathrm{GL}_2(\mathbb{Z}_p)$} et de
  {$\mathrm{Gal}(\overline{\mathbb{Q}}_p/\mathbb{Q}_p)$} en {$\ell = p$}}, Duke
  Math. J. \textbf{115} (2002), no.~2, 205--310, With an appendix by Guy
  Henniart.

\bibitem[Bos09]{Bosch}
Siegfried Bosch, \emph{Half a century of rigid analytic spaces}, Pure Appl.
  Math. Q. \textbf{5} (2009), no.~4, Special Issue: In honor of John Tate. Part
  1, 1435--1467.

\bibitem[BR92]{BlaRog}
Don Blasius and Jonathan~D. Rogawski, \emph{{Tate classes and arithmetic
  quotients of the two-ball.}}, {The zeta functions of Picard modular surfaces.
  CRM Workshop, Montreal / Can. 1988, 421-444 (1992).}, 1992.

\bibitem[Buz07]{Buzzard}
Kevin Buzzard, \emph{Eigenvarieties}, {$L$}-functions and {G}alois
  representations, London Math. Soc. Lecture Notes, vol. 320, Cambridge Univ.
  Press, 2007, Durham, 2004, pp.~59--120.

\bibitem[BZ77]{BernZ}
Joseph Bernstein and Andrei Zelevinsky, \emph{Induced representations of
  reductive {$\mathfrak{p}$}-adic groups. {I}}, Ann. Sci. \'Ecole Norm. Sup.
  \textbf{10} (1977), no.~4, 441--472.

\bibitem[Cas]{Casselman}
William Casselman, \emph{Introduction to the theory of admissible
  representations of {$p$}-adic reductive groups}, available at
  \url{http://www.math.ubc.ca/~cass/research/pdf/p-adic-book.pdf}.

\bibitem[Cas80]{Cassel2}
\bysame, \emph{The unramified principal series of {$p$}-adic groups. {I}. the
  spherical function}, Compositio Mathematica \textbf{40} (1980), no.~3,
  387--406.

\bibitem[CG]{CalGee}
Frank Calegari and Toby Gee, \emph{Irreducibility of automorphic {G}alois
  representations of {$\mathrm{GL}(n)$}, {$n$} at most {$5$}},
  \url{http://arxiv.org/abs/1104.4827}.

\bibitem[CH]{CheHar}
Gaëtan Chenevier and Michael Harris, \emph{Stabilisation de la formule des
  traces, variétés de shimura, et applications arithmétiques}, vol.~2,
  ch.~Construction of automorphic Galois representations, II, ?, ?

\bibitem[Che51]{Chevalley}
Claude Chevalley, \emph{Deux {T}héorèmes d'{A}rithmétique}, J. Math. Soc.
  Japan \textbf{3} (1951), no.~1, 36--44.

\bibitem[Che04]{TheseG}
Gaëtan Chenevier, \emph{Familles {$p$}-adiques de formes automorphes pour
  {$\mathrm{GL}_n$}}, J. Reine Angew. Math. \textbf{570} (2004), 143--217.

\bibitem[Che09]{Gunit}
\bysame, \emph{Une application des variétés de {H}ecke des groupes
  unitaires}, Part of the Book Project under the direction of Michael Harris at
  Paris 7: \url{http://fa.institut.math.jussieu.fr/node/29}, 2009.

\bibitem[Clo86]{Clo2}
Laurent Clozel, \emph{On limit multiplicities of discrete series
  representations in spaces of automorphic forms}, Invent. Math. \textbf{83}
  (1986), no.~2, 265--284.

\bibitem[Clo88]{Clozel}
Laurent Clozel, \emph{Motifs et {F}ormes {A}utomorphes: {A}pplications du
  {P}rincipe de {F}onctorialité}, Automorphic {F}orms, {S}himura {V}arieties,
  and {$L$}-functions (Laurent Clozel and James~S. Milne, eds.), vol.~1, 1988,
  Proceedings of a {C}onference held at the {U}niversity of {M}ichigan, {A}nn
  {A}rbor.

\bibitem[Con99]{ConradIrr}
Brian Conrad, \emph{Irreducible components of rigid spaces}, Ann. Inst. Fourier
  \textbf{49} (1999), no.~2, 473--541.

\bibitem[GJ78]{GelJac}
Stephen Gelbart and Hervé Jacquet, \emph{A relation between automorphic
  representations of {$\mathrm{GL}(2)$} and {$\mathrm{GL}(3)$}}, Annales
  scientifiques de l'ENS \textbf{11} (1978), no.~4, 471--542.

\bibitem[Gol]{Goldring}
Wushi Goldring, \emph{Galois representations associated to holomorphic limits
  of discrete series}, preprint.

\bibitem[IM65]{IwahoriMatsumoto}
Nagayoshi Iwahori and Hideya Matsumoto, \emph{{On some {B}ruhat decomposition
  and the structure of the {H}ecke rings of $p$-adic {C}hevalley groups}},
  Publications Mathématiques de l'IHÉS \textbf{25} (1965), 5--48.

\bibitem[Jon]{Jones}
Owen Jones, \emph{An analogue of the {BGG} resolution for locally analytic
  principal series}, \url{http://arxiv.org/abs/0912.4756}.

\bibitem[Kot86]{Kottwitz}
Robert Kottwitz, \emph{Stable trace formula: elliptic singular terms},
  Mathematische Annalen \textbf{275} (1986), no.~3, 365--399.

\bibitem[KS99]{KS}
Robert~E. Kottwitz and Diana Shelstad, \emph{Foundations of twisted endoscopy},
  Ast\'erisque (1999), no.~255, vi+190.

\bibitem[Loe11]{Loe}
David Loeffler, \emph{Overconvergent algebraic automorphic forms}, Proc. London
  Math. Soc. \textbf{102} (2011), no.~2, 193--228.

\bibitem[MW89]{MoeWal}
Colette Moeglin and Jean-Loup Waldspurger, \emph{Le spectre résiduel de
  {$\mathrm{GL}(n)$}}, Annales scientifiques de l'ENS \textbf{22} (1989),
  no.~4, 605--674.

\bibitem[MW06]{MWtransfert}
\bysame, \emph{Sur le transfert des traces d'un groupe classique {$p$}-adique
  \`a un groupe lin\'eaire tordu}, Selecta Math. \textbf{12} (2006), no.~3-4,
  433--515.

\bibitem[Tad94]{Tadic}
Marko Tadi{\'c}, \emph{Representations of {$p$}-adic symplectic groups},
  Compositio Mathematica \textbf{90} (1994), no.~2, 123--181.

\bibitem[Tay]{Tay}
Richard Taylor, \emph{The image of complex conjugation in {$\ell$}-adic
  representations associated to automorphic forms}, to appear A.N.T.

\bibitem[Tay91]{Tay2}
\bysame, \emph{Galois representations associated to {S}iegel modular forms of
  low weight}, Duke Math. J. \textbf{63} (1991), no.~2, 281--332.

\bibitem[Wal10]{Wald}
Jean-Loup Waldspurger, \emph{Les facteurs de transfert pour les groupes
  classiques: un formulaire}, Manuscripta Math. \textbf{133} (2010), no.~1-2,
  41--82.

\end{thebibliography}

\end{document}